\newtheorem{theorem}{Theorem}[section]
\newtheorem{definition}[theorem]{Definition}
\newtheorem{proposition}[theorem]{Proposition}
\newtheorem{lemma}[theorem]{Lemma}
\newtheorem{corollary}[theorem]{Corollary}
\newtheorem{conjecture}[theorem]{Conjecture}
\newtheorem{problem}[theorem]{Problem}
\begin{document}

\title{First order deformations of the Fourier matrix}

\author{Teodor Banica}
\address{T.B.: Department of Mathematics, Cergy-Pontoise University, 95000 Cergy-Pontoise, France. {\tt teo.banica@gmail.com}}

\subjclass[2000]{05B20 (14B05, 46L37)}
\keywords{Complex Hadamard matrix, Fourier matrix}

\begin{abstract}
The $N\times N$ complex Hadamard matrices form a real algebraic manifold $C_N$. The singularity at a point $H\in C_N$ is described by a filtration of cones $T^\times_HC_N\subset T^\circ_HC_N\subset T_HC_N\subset\widetilde{T}_HC_N$, coming from the trivial, affine, smooth and first order deformations. We study here these cones in the case where $H=F_N$ is the Fourier matrix, $(w^{ij})$ with $w=e^{2\pi i/N}$, our main result being a simple description of $\widetilde{T}_HC_N$. As a consequence, the rationality conjecture $dim_\mathbb R(\widetilde{T}_HC_N)=dim_\mathbb Q(\widetilde{T}_HC_N\cap M_N(\mathbb Q))$ holds at $H=F_N$.
\end{abstract}

\maketitle

\tableofcontents

\section*{Introduction}

A complex Hadamard matrix is a square matrix $H\in M_N(\mathbb C)$ whose entries are on the unit circle, $|H_{ij}|=1$, and whose rows are pairwise orthogonal. The basic example of such a matrix is the Fourier one, $F_N=(w^{ij})$ with $w=e^{2\pi i/N}$:
$$F_N=\begin{pmatrix}
1&1&1&\ldots&1\\
1&w&w^2&\ldots&w^{N-1}\\
\ldots&\ldots&\ldots&\ldots&\ldots\\
1&w^{N-1}&w^{2(N-1)}&\ldots&w^{(N-1)^2}
\end{pmatrix}$$

In general, the theory of complex Hadamard matrices can be regarded as a ``non-standard'' branch of discrete Fourier analysis. For a number of potential applications, to quantum physics and quantum information theory questions, see \cite{bb+}, \cite{tz1}, \cite{wer}.

The complex Hadamard matrices are also known to parametrize the pairs of orthogonal MASA in the simplest von Neumann algebra, $M_N(\mathbb C)$. This discovery of Popa \cite{pop} has led to deep connections with a number of related areas. See \cite{bbs}, \cite{jon}, \cite{jsu}, \cite{pop}.

One key problem in the area, raised by Jones in \cite{jon}, concerns the computation of the numbers $c_k=\dim P_k$, where $P=(P_k)$ is the planar algebra associated to $H$. These numbers, called quantum invariants of $H$, are in general extremely hard to compute. Very little is known about them, and in particular we have the following question:

\medskip

\noindent {\bf Problem.} {\em What is the relation between the quantum invariants of $H$, and the geometry of the complex Hadamard matrix manifold around $H$?}

\medskip

In order to further comment on this problem, let us first observe that the complex Hadamard matrix manifold $C_N$ can be defined as follows:
$$C_N=M_N(\mathbb T)\cap\sqrt{N}U_N$$

Thus $C_N$ is a real algebraic manifold, not smooth in general. The singularity at a given point $H\in C_N$ is best described by a filtration of cones, as follows: 
$$T^\times_HC_N\subset T^\circ_HC_N\subset T_HC_N\subset\widetilde{T}_HC_N$$

Here $T_HC_N$ is the tangent cone, and $T_H^\circ C_N$ is the affine tangent cone, obtained by restricting attention to the affine deformations. These cones live in the enveloping tangent space $\widetilde{T}_HC_N$, obtained by interesecting the tangent spaces to the smooth manifolds $M_N(\mathbb T)$ and $\sqrt{N}U_N$, and contain the trivial tangent cone $T_H^\times C_N$, consisting of vectors which are tangent to the trivial deformations, obtained by multiplying rows and columns.

In general, the computation of these cones is a quite difficult linear algebra problem. In the Fourier matrix case, however, we have the following key formula:
$$\dim(\widetilde{T}_HC_N)=\#\{(i,j)|H_{ij}=1\}$$

This result was established by Tadej and \.Zyczkowski in \cite{tz2}, with parts of it going back to Karabegov's paper \cite{kar}, and with the general case discussed in detail in \cite{ban}.

In this paper, following a number of supplementary ideas from \cite{bbe}, \cite{dit}, \cite{tz1}, we will obtain a finer result about $H=F_N$, directly in terms of $\widetilde{T}_HC_N$:

\medskip

\noindent {\bf Theorem.} {\em For $H=F_N$ the vectors $A\in\widetilde{T}_HC_N$ appear as plain sums of the following type, where the $L$ variables form dephased matrices $L^{GH}\in M_{G\times H}(\mathbb R)$:
$$A_{ij}=\sum_{G\times H\subset\mathbb Z_N}L^{GH}_{\varphi_G(i)\varphi_H(j)}$$
In particular, the rationality conjecture $dim_\mathbb R(\widetilde{T}_HC_N)=dim_\mathbb Q(\widetilde{T}_HC_N\cap M_N(\mathbb Q))$ holds.}

\medskip 

We refer to section 3 below for the precise formulation of this result, which requires a number of preliminaries, not to be explained in detail right now.

Let us go back now to the quantum algebra/algebraic geometry problem formulated above. This problem was recently investigated in \cite{ban}, with a proposal there involving Diaconis-Shahshahani type variables \cite{dsh}. The results obtained here suggest:

\medskip

\noindent {\bf Question.} {\em What algebraic and geometric invariants of $H$ are encoded by the statistics of the number of $1$ entries, over the equivalence class of $H$?}

\medskip

More precisely, consider for instance a usual Hadamard matrix, $H\in M_N(\pm 1)$. This matrix is of course described by the set of indices $E\subset\{1,\ldots,N\}\times\{1,\ldots,N\}$ telling us where the 1 entries are. What we propose here is a more geometric approach to $H$, by considering the function $\varphi:\mathbb Z_2^N\times\mathbb Z_2^N\to\mathbb N$ which counts the number of 1 entries of the various conjugates of $H$, obtained by switching signs on rows and columns:
$$\varphi(a,b)=\#\{(i,j)|a_ib_jH_{ij}=1\}$$

This construction can be generalized to the Butson matrices, and our claim is that $\varphi$, or just its probabilistic distribution $\mu$, should encode important information about $H$.

In general, the computation of $\mu$ is a difficult problem. In the real case, this is related to the Gale-Berlekamp game \cite{fsl}, \cite{rvi} and to various questions regarding the 0-1 matrices, cf. \cite{cra}, \cite{nra}, \cite{tvu}. We intend to come back to these questions in some future work.

The paper is organized as follows: 1-2 are preliminary sections, containing known material along with a number of new results, in 3 we state and prove our main results, and 4 contains the probabilistic speculations, and a few concluding remarks.

\medskip

\noindent {\bf Acknowledgements.} We would like to thank Ingemar Bengtsson, Alexander Karabegov, Ion Nechita, Joseph O'Rourke, Gerhard Paseman, Patrick Popescu-Pampu, Jean-Marc Schlenker, Ferenc Sz\"oll\H{o}si and Karol \.Zyczkowski for several useful discussions.

\section{Complex Hadamard matrices}

We consider in this paper various square matrices over the complex numbers, $H\in M_N(\mathbb C)$. The indices of our matrices will usually range in the set $\{0,1,\ldots,N-1\}$.

\begin{definition}
A complex Hadamard matrix is a matrix $H\in M_N(\mathbb C)$ whose entries are on the unit circle, $|H_{ij}|=1$, and whose rows are pairwise orthogonal.
\end{definition}

The basic example is the Fourier matrix $F_N=(w^{ij})$ with $w=e^{2\pi i/N}$. 

One way of constructing new examples is by taking tensor products, $(H\otimes K)_{ia,jb}=H_{ij}K_{ab}$. In matrix form, by using the lexicographic order on the double indices:
$$H\otimes K=\begin{pmatrix}H_{11}K&\ldots&H_{1N}K\\ \ldots&\ldots&\ldots\\ H_{N1}K&\ldots&H_{NN}K\end{pmatrix}$$

Observe that the Fourier matrix $F_N$ is nothing but the matrix of the discrete Fourier transform, over the cyclic group $\mathbb Z_N$. In fact, we have the following result:

\begin{proposition}
The Fourier matrix of a finite abelian group $G=\mathbb Z_{N_1}\times\ldots\times\mathbb Z_{N_k}$ is the complex Hadamard matrix $F_G=F_{N_1}\otimes\ldots\otimes F_{N_k}$. 
\end{proposition}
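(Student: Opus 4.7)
The plan is to use the intrinsic (character-theoretic) definition of the Fourier matrix of a finite abelian group, and then match it with the iterated tensor product via the lexicographic ordering.

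First I would recall that for a finite abelian group $G$, the Fourier matrix is defined by $(F_G)_{g,\chi}=\chi(g)$, where $g$ ranges in $G$ and $\chi$ ranges in the dual group $\widehat{G}$. For $G=\mathbb{Z}_N$ the characters are exactly $\chi_j(i)=w^{ij}$ with $w=e^{2\pi i/N}$, so this recovers the matrix $F_N=(w^{ij})$ from the introduction. In particular, before worrying about the tensor decomposition, it is clear that $F_G$ is a complex Hadamard matrix: its entries are roots of unity, hence on the unit circle, and the rows are pairwise orthogonal by the standard character orthogonality relation $\sum_{g\in G}\chi(g)\overline{\chi'(g)}=|G|\delta_{\chi,\chi'}$.

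Next, I would invoke the standard fact that characters are multiplicative on direct products: for $G=G_1\times G_2$ one has $\widehat{G}=\widehat{G_1}\times\widehat{G_2}$, with $(\chi_1,\chi_2)(g_1,g_2)=\chi_1(g_1)\chi_2(g_2)$. Applied inductively to $G=\mathbb{Z}_{N_1}\times\ldots\times\mathbb{Z}_{N_k}$, this gives, for multi-indices $i=(i_1,\ldots,i_k)$ and $j=(j_1,\ldots,j_k)$, the formula
$$(F_G)_{ij}=\prod_{s=1}^k w_s^{i_s j_s},\qquad w_s=e^{2\pi i/N_s}.$$
On the other hand, unfolding the definition $(H\otimes K)_{ia,jb}=H_{ij}K_{ab}$ iteratively gives exactly
$$(F_{N_1}\otimes\ldots\otimes F_{N_k})_{ij}=\prod_{s=1}^k (F_{N_s})_{i_s j_s}=\prod_{s=1}^k w_s^{i_s j_s},$$
so the two matrices coincide once the multi-indices are ordered lexicographically, as specified in the matrix-form definition of the tensor product.

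There is no real obstacle here; the only thing to be careful about is bookkeeping with multi-indices and confirming that the lexicographic ordering used to present $H\otimes K$ as a block matrix is consistent across the $k$-fold iteration. If desired, one can close the argument by noting that the property ``complex Hadamard'' is stable under tensor products (entries remain on the unit circle, and the inner product of two rows of $H\otimes K$ factors as a product of inner products of rows of $H$ and of $K$), giving an alternative route to the Hadamard property of $F_G$ that does not invoke character orthogonality.
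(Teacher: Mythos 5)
Your proposal is correct and takes essentially the same approach as the paper: identify $F_G$ via characters of $G$, note that $F_{\mathbb{Z}_N}=F_N$, and use the multiplicativity of characters over direct products to conclude $F_{G'\times G''}=F_{G'}\otimes F_{G''}$. You simply spell out the character-theoretic bookkeeping that the paper's one-line proof leaves implicit, and additionally record the (routine) verification of the Hadamard property, which the paper does not bother to restate.
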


\begin{proof}
For a product of groups $G=G'\times G''$ we have $F_G=F_{G'}\otimes F_{G''}$, and together with the above observation regarding $F_N$, this gives the equality in the statement.
\end{proof}

As an example, for the Klein group $\mathbb Z_2\times\mathbb Z_2$ we obtain the following matrix:
$$F_2\otimes F_2=\begin{pmatrix}
1&1\\
1&-1
\end{pmatrix}
\otimes
\begin{pmatrix}
1&1\\
1&-1
\end{pmatrix}
=\begin{pmatrix}
1&1&1&1\\
1&-1&1&-1\\
1&1&-1&-1\\ 
1&-1&-1&1
\end{pmatrix}$$

The following notions will play a key role in what follows:

\begin{definition}
Let $H,K\in M_N(\mathbb C)$ be two complex Hadamard matrices.
\begin{enumerate}
\item $H$ is called dephased  if its first row and column consist of $1$ entries only. 

\item $H,K$ are called equivalent if one can pass from one to the other by permuting rows and columns, or by multiplying the rows and columns by numbers in $\mathbb T$.
\end{enumerate}
\end{definition}

In other words, we use the equivalence relation on the $N\times N$ matrices coming from the action $T_{(A,B)}(H)=AHB^*$ of the group $G=(K_N\times K_N)/\mathbb T$, where $K_N=\mathbb T\wr S_N$ is the group of permutation matrices with entries multiplied by elements of $\mathbb T$. Observe that any complex Hadamard matrix can be assumed, up to equivalence, to be dephased.

Given a matrix $Q\in M_2(\mathbb T)$, written $Q=(^a_c{\ }^b_d)$, we can form the following matrix:
$$F_2\!{\ }_Q\!\otimes F_2=\begin{pmatrix}
1&1\\
1&-1
\end{pmatrix}
{\ }_{\begin{pmatrix}
a&b\\
c&d
\end{pmatrix}}\!\otimes
\begin{pmatrix}
1&1\\
1&-1
\end{pmatrix}=
\begin{pmatrix}
a&a&b&b\\
c&-c&d&-d\\ 
a&a&-b&-b\\
c&-c&-d&d
\end{pmatrix}$$

With the same data in hand, we can form as well the following matrix:
$$F_2\otimes_QF_2=\begin{pmatrix}
1&1\\
1&-1
\end{pmatrix}
\otimes_{\begin{pmatrix}
a&b\\
c&d
\end{pmatrix}}
\begin{pmatrix}
1&1\\
1&-1
\end{pmatrix}=
\begin{pmatrix}
a&b&a&b\\
a&-b&a&-b\\ 
c&d&-c&-d\\
c&-d&-c&d
\end{pmatrix}$$

Observe that the above two matrices are indeed Hadamard. In fact, these matrices appear as particular cases of the following general construction, due to Di\c t\u a \cite{dit}:

\begin{proposition}
If $H\in M_N(\mathbb C)$ and $K\in M_M(\mathbb C)$ are Hadamard, then so are:
\begin{enumerate}
\item $H\!{\ }_Q\!\otimes K=(Q_{aj}H_{ij}K_{ab})_{ia,jb}$, with $Q\in M_{M\times N}(\mathbb T)$.

\item $H\otimes_QK=(Q_{ib}H_{ij}K_{ab})_{ia,jb}$, with $Q\in M_{N\times M}(\mathbb T)$.
\end{enumerate}
These two matrices will be called left and right Di\c t\u a deformations of $H\otimes K$.
\end{proposition}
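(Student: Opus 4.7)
The plan is to verify the two Hadamard axioms separately. The unit-modulus condition is immediate since all of $Q_{aj}$, $H_{ij}$, $K_{ab}$ lie in $\mathbb{T}$, so in both cases every entry has absolute value $1$. The substance is in the row orthogonality, and for this I would compute the inner product between two rows of the deformed matrix and attempt to factor the resulting double sum so that the $H$-orthogonality and $K$-orthogonality relations can each be applied in turn.

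For case (1), I consider two rows indexed by $(i,a)$ and $(i',a')$, and write out
$$\sum_{j,b} Q_{aj}H_{ij}K_{ab}\,\overline{Q_{a'j}H_{i'j}K_{a'b}}=\left(\sum_j Q_{aj}\overline{Q_{a'j}}\,H_{ij}\overline{H_{i'j}}\right)\left(\sum_b K_{ab}\overline{K_{a'b}}\right).$$
The second factor already separates because the $Q$-indices do not involve $b$, so the sum over $b$ vanishes whenever $a\neq a'$ by orthogonality of $K$. When $a=a'$, the $Q$-ratios in the first factor become $|Q_{aj}|^2=1$, reducing that factor to $\sum_j H_{ij}\overline{H_{i'j}}=N\delta_{ii'}$, and the whole expression equals $NM\,\delta_{(i,a),(i',a')}$, as required.

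For case (2) the same identity is not available because $Q_{ib}\overline{Q_{i'b}}$ now depends on $b$, so the factorization has to be carried out in the opposite order. I would pull the $j$-sum inside and write
$$\sum_{j,b} Q_{ib}H_{ij}K_{ab}\,\overline{Q_{i'b}H_{i'j}K_{a'b}}=\sum_b Q_{ib}\overline{Q_{i'b}}\,K_{ab}\overline{K_{a'b}}\left(\sum_j H_{ij}\overline{H_{i'j}}\right).$$
Now the bracketed sum is $N\delta_{ii'}$, killing everything unless $i=i'$; once $i=i'$, the $Q$-ratios become $1$ and the remaining sum $\sum_b K_{ab}\overline{K_{a'b}}=M\delta_{aa'}$ finishes the job, again giving $NM\,\delta_{(i,a),(i',a')}$.

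There is no real obstacle here; the only point worth noting is the asymmetry in how the two factorizations are set up, which reflects the fact that in the ``left'' deformation the $Q$-twist sits on the $H$-side of the indexing and in the ``right'' deformation it sits on the $K$-side. In each case the twist is neutralized precisely at the step where the Kronecker delta from the corresponding untwisted orthogonality relation collapses the indices to make $|Q|^2=1$.
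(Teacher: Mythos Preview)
Your proof is correct and follows essentially the same approach as the paper: factor the double sum over $(j,b)$, apply the $K$-orthogonality first in case (1) and the $H$-orthogonality first in case (2), then use the resulting Kronecker delta to collapse $Q\bar Q$ to $1$ before applying the remaining orthogonality. The paper's write-up is more compressed but the computation is identical.
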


\begin{proof}
If we denote by $R_{ia}^1$ the $ia$-th row of $H\!{\ }_Q\!\otimes K$, we have:
$$\langle R_{ia}^1,R_{kc}^1\rangle=\sum_{jb}Q_{aj}H_{ij}K_{ab}\cdot\bar{Q}_{cj}\bar{H}_{kj}\bar{K}_{cb}=M\delta_{ac}\sum_jH_{ij}\bar{H}_{kj}=MN\delta_{ac}\delta_{ik}$$

Also, if we denote by $R_{ia}^2$ the $ia$-th row of $H\otimes_QK$, we have:
$$\langle R_{ia}^2,R_{kc}^2\rangle=\sum_{jb}Q_{ib}H_{ij}K_{ab}\cdot\bar{Q}_{kb}\bar{H}_{kj}\bar{K}_{cb}=N\delta_{ik}\sum_bK_{ab}\bar{K}_{cb}=NM\delta_{ik}\delta_{ac}$$

Thus in both cases we have $\langle R_{ia},R_{kc}\rangle=NM\delta_{ia,kc}$, which gives the result.
\end{proof}

The left and right Di\c t\u a deformations are related as follows:

\begin{proposition}
We have an equivalence $H\!{\ }_Q\!\otimes K\simeq K\otimes_QH$.
\end{proposition}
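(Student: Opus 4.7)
The plan is to expand both sides directly and observe that they contain exactly the same entries, merely labelled by different orderings of the double indices. The equivalence will then come entirely from the coordinate swap, implemented by a permutation of rows and a permutation of columns (which is part of the equivalence relation from Definition~1.3).

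First I would fix conventions carefully. For $H\in M_N(\mathbb{C})$ and $K\in M_M(\mathbb{C})$, the left Di\c{t}\u{a} deformation is
\[
(H\!{\ }_Q\!\otimes K)_{ia,jb}=Q_{aj}H_{ij}K_{ab},\qquad Q\in M_{M\times N}(\mathbb{T}).
\]
On the other side, $K\otimes_Q H$ is obtained by applying Proposition~1.4(2) with the roles of the two factors reversed: $K$ plays the role of the first factor (so its indices $a,b$ take the place of $i,j$) and $H$ plays the role of the second factor (so its indices $i,j$ take the place of $a,b$). With $Q\in M_{M\times N}(\mathbb{T})$ as before, the formula reads
\[
(K\otimes_Q H)_{ai,bj}=Q_{aj}K_{ab}H_{ij}.
\]

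At this stage the verification is immediate: the two matrices have entrywise the same scalars $Q_{aj}H_{ij}K_{ab}$, but indexed by different orderings of the pair of coordinates, namely $(i,a),(j,b)$ on one side and $(a,i),(b,j)$ on the other. Let $\sigma$ denote the coordinate-swap permutation of $\{0,\dots,N-1\}\times\{0,\dots,M-1\}$ sending $(i,a)\mapsto(a,i)$, regarded as a permutation matrix $P_\sigma$. Then the identity above rewrites as
\[
K\otimes_Q H=P_\sigma\,(H\!{\ }_Q\!\otimes K)\,P_\sigma^{*},
\]
which exhibits the two matrices as equivalent in the sense of Definition~1.3(2).

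There is no real obstacle here, only a bookkeeping pitfall: one has to keep track of the fact that when the roles of $H$ and $K$ are swapped, the matrix $Q\in M_{M\times N}(\mathbb{T})$ is retained with the same shape, because in the left deformation $Q$ is indexed by (row of the second factor, column of the first), while in the right deformation it is indexed by (row of the first factor, column of the second) — and swapping the factors converts the one convention into the other. Once this is observed, the proof reduces to the single displayed identity above.
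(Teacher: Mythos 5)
Your proof is correct and follows essentially the same route as the paper: expand both sides via Proposition 1.4, observe the entrywise identity $Q_{aj}H_{ij}K_{ab}=(K\otimes_Q H)_{ai,bj}$, and conclude by noting that the index swap $(i,a)\mapsto(a,i)$ is a row/column permutation and hence an equivalence in the sense of Definition 1.3. The extra bookkeeping remark about the shape of $Q$ being preserved is a nice touch, but otherwise this matches the paper's argument.
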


\begin{proof}
According to the formulae in Proposition 1.4 above, we have:
$$(H\!{\ }_Q\!\otimes K)_{ia,jb}=q_{aj}H_{ij}K_{ab}=q_{aj}K_{ab}H_{ij}=(K\otimes_QH)_{ai,bj}$$

Now since the transformation $M_{ia,jb}\to M_{ai,bj}$ is implemented by certain permutations of the rows and columns, the above two matrices are indeed equivalent.
\end{proof}

Observe now that, if we look at the complex Hadamard matrices modulo the equivalence relation in Definition 1.3, we can always assume that our parameter matrix $Q$ is ``dephased'', in the sense that its first row and column consist of 1 entries only.

As an illustration here, at $N=M=2$ we have the following result:

\begin{proposition}
Any Di\c t\u a deformation of $F_{2,2}=F_2\otimes F_2$ is equivalent to
$$F_{2,2}^q
=\begin{pmatrix}
1&1&1&1\\
1&1&-1&-1\\
1&-1&q&-q\\ 
1&-1&-q&q
\end{pmatrix}$$
for a certain value of the parameter $q\in\mathbb T$.
\end{proposition}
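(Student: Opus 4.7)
My plan is as follows. First, by Proposition 1.5 applied with $H=K=F_2$, the left Di\c t\u a deformation $F_2\!{\ }_Q\!\otimes F_2$ is equivalent to the right Di\c t\u a deformation $F_2\otimes_QF_2$, so it suffices to treat matrices of the second form, with $Q=\begin{pmatrix}a&b\\c&d\end{pmatrix}\in M_2(\mathbb T)$.

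Second, I would argue that up to the Hadamard equivalence of Definition 1.3, one may assume $Q$ is dephased, namely of the form $\begin{pmatrix}1&1\\1&q\end{pmatrix}$. Indeed, the formula $(F_2\otimes_QF_2)_{ia,jb}=Q_{ib}(F_2)_{ij}(F_2)_{ab}$ from Proposition 1.4(2) shows that rescaling row $i$ of $Q$ by $\lambda\in\mathbb T$ rescales rows $(i,0)$ and $(i,1)$ of the $4\times 4$ matrix by $\lambda$, while rescaling column $b$ of $Q$ by $\mu\in\mathbb T$ rescales columns $(0,b)$ and $(1,b)$ by $\mu$. Both operations are legitimate Hadamard equivalences, and using them we can absorb three of the four phases of $Q$, reducing to the claimed normal form with $q=ad/(bc)\in\mathbb T$.

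Third, with this dephased $Q$, a direct computation from the formula in Proposition 1.4(2) yields
$$F_2\otimes_QF_2=\begin{pmatrix}1&1&1&1\\1&-1&1&-1\\1&q&-1&-q\\1&-q&-1&q\end{pmatrix}$$
and swapping columns $2$ and $3$ (again a Hadamard equivalence) produces precisely the matrix $F_{2,2}^q$ in the statement.

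The only real obstacle is the bookkeeping of the second step: one must check that the row and column rescalings of $Q$ genuinely implement equivalences on the enveloping $4\times 4$ matrix, rather than modifying the inner $F_2$ factors in a way that would alter the matrix non-equivalently. The explicit formula from Proposition 1.4(2) makes this transparent, so no further work is needed beyond the routine verifications above.
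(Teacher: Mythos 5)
Your proof is correct and follows essentially the same strategy as the paper: reduce to right Di\c t\u a deformations via Proposition 1.5, then dephase. The only organizational difference is that you dephase the parameter matrix $Q$ first (justifying that this corresponds to row/column equivalences on the $4\times4$ matrix via the explicit formula from Proposition 1.4(2)) and then compute, whereas the paper computes $F_2\otimes_QF_2$ for general $Q$ and dephases the resulting $4\times4$ matrix step by step; both yield $q=ad/(bc)$ and the final column swap, and your observation about dephasing $Q$ is in fact already stated in the paper in the remark between Propositions 1.5 and 1.6.
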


\begin{proof}
First, by using Proposition 1.5 we may restrict attention to the case of right Di\c t\u a deformations. But here, with $Q=(^a_b{\ }^c_d)$, our claim is that we have $F_2\otimes_QF_2\simeq F_{2,2}^q$, with $q=ad/bc$. Indeed, by dephasing our matrix we obtain:
$$F_2\otimes_QF_2=\begin{pmatrix}
a&b&a&b\\
a&-b&a&-b\\ 
c&d&-c&-d\\
c&-d&-c&d
\end{pmatrix}
\simeq\begin{pmatrix}
1&1&1&1\\
1&-1&1&-1\\ 
\frac{c}{a}&\frac{d}{b}&-\frac{c}{a}&-\frac{d}{b}\\
\frac{c}{a}&-\frac{d}{b}&-\frac{c}{a}&\frac{d}{b}
\end{pmatrix}\simeq
\begin{pmatrix}
1&1&1&1\\
1&-1&1&-1\\
1&\frac{ad}{bc}&-1&-\frac{ad}{bc}\\
1&-\frac{ad}{bc}&-1&\frac{ad}{bc}
\end{pmatrix}$$

Now by interchanging the middle columns, we obtain the matrix $F_{2,2}^q$, as claimed.
\end{proof}

Observe that, in view of Proposition 1.6, Haagerup's result in \cite{haa} tells us that all complex Hadamard matrices of order $N\leq 5$ appear as Di\c t\u a deformations of Fourier matrices.

Back now to the general case, we have the following key definition:

\begin{definition}
A complex Hadamard matrix $H\in M_N(\mathbb C)$ is called:
\begin{enumerate}
\item Of Buston type, if all its entries are roots of unity, of a given order $s<\infty$.

\item Regular, if all scalar products between rows decompose as sums of cycles.
\end{enumerate}
\end{definition}

Here by ``cycle'' we mean a full sum of roots of unity, possibly rotated by a scalar:
$$C_{n,\lambda}=\sum_{k=1}^n\lambda e^{2k\pi i/n}$$

As a basic example, all the Fourier matrices are of Butson type, and their Di\c t\u a deformations are regular. One interest in the above notions comes from the fact that the regular complex Hadamard matrices can be fully classified up to $N=6$. See \cite{bbs}.

In general, the regularity condition is Definition 1.7 is quite poorly understood:

\begin{conjecture}[Regularity]
Any Butson matrix is regular.
\end{conjecture}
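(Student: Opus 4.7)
The conjecture reformulates purely arithmetically. If $H$ is a Butson matrix of order $s$, then for distinct rows $i,k$ the inner product
$$\sum_{j=0}^{N-1} H_{ij}\overline{H_{kj}} = 0$$
is a vanishing multiset sum of $N$ elements of the group $\mu_s$ of $s$-th roots of unity. The regularity condition asks that this multiset admits a partition into cycles, where a cycle is a scalar multiple $\lambda\mu_n$ of a cyclic subgroup of order $n \mid s$. So Conjecture 1.8 reduces to a purely combinatorial question: can every vanishing multiset of $s$-th roots of unity be partitioned into cosets of cyclic subgroups of $\mu_s$?

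This places the problem squarely in the classical theory of vanishing sums of roots of unity. For $s = p^a$ a prime power, R\'edei's theorem gives exactly what is needed: every vanishing sum of $s$-th roots of unity is a non-negative integer combination of the minimal relations $1 + \zeta_p + \cdots + \zeta_p^{p-1}$, each of which is itself a cycle $C_{p,\lambda}$. For $s = p^a q^b$ with two distinct prime factors, the analogous positivity statement of Lam and Leung would similarly settle the conjecture. Hence the strategy is: first dispose of prime power orders directly from R\'edei, then handle two prime factors via Lam--Leung, and finally attempt to reduce the general Butson matrix of arbitrary order $s$ to these cases, perhaps by slicing the matrix along the prime components of $s$.

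The main obstacle is the case in which $s$ has three or more distinct prime factors. At that level of generality it is known that there exist vanishing sums of $s$-th roots of unity which do \emph{not} admit any partition into cosets of cyclic subgroups, even though they always decompose in $\mathbb{Z}[\zeta_s]$ as signed integer combinations of prime cycles. Consequently, a proof of the conjecture for general $s$ cannot be purely arithmetic; it must exploit something genuinely Hadamard-theoretic about the specific vanishing sums that arise as row products of a Butson matrix. A natural starting point is the tensor product decomposition $F_G = F_{N_1}\otimes\cdots\otimes F_{N_k}$ from Proposition 1.2, together with the Di\c t\u a deformation construction of Proposition 1.4: one might hope to show inductively that the row products of a Butson matrix inherit a block structure compatible with the prime factorization of $s$, and that this block structure forces a cycle decomposition even where generic vanishing sums fail to admit one. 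I expect that this last step --- identifying precisely which feature of Hadamard orthogonality rules out the Lam--Leung type counterexamples --- is the genuinely hard part, and indeed the reason the statement remains only a conjecture.
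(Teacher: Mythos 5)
This statement is an open conjecture in the paper, not a theorem, and the author offers no proof of it: he states it, notes that the prime-power case $s=p^a$ and the two-prime case $s=p^aq^b$ follow from the structure theory of vanishing sums of roots of unity (citing Lam--Leung \cite{lle}), and exhibits the six-term counterexample $w^5+w^6+w^{12}+w^{18}+w^{24}+w^{25}=0$ with $w=e^{2\pi i/30}$ showing that for three or more prime factors such sums need not decompose into cycles. Your analysis is therefore not a proof and does not claim to be, and it is faithful to the paper's framing: you correctly reduce regularity to the partition-into-cycles question for the row-product multisets, correctly identify the two tractable cases and their source in \cite{lle}, correctly locate the obstruction at three or more primes, and your closing observation that a proof would have to show Hadamard orthogonality excludes the Lam--Leung-type sums is exactly the paper's own gloss (``such a tricky sum cannot be used for constructing a complex Hadamard matrix'').

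Two small points worth flagging. First, an attribution: the nonnegative cycle decomposition for prime-power order is usually credited to de~Bruijn or Schoenberg rather than R\'edei; the Lam--Leung paper the author cites is the right reference for both the $p^a$ and $p^aq^b$ cases. Second, the tentative reduction you float --- inducting through the tensor product decomposition of Proposition 1.2 and the Di\c t\u a construction of Proposition 1.4 --- would only reach Butson matrices built Fourier-like from such operations, and a general Butson matrix of composite order $s$ need not arise that way (Haagerup's result in \cite{haa} covers only $N\leq 5$), so that is a heuristic rather than a proof sketch. Since the paper itself leaves the conjecture open, there is no gap in your proposal relative to the paper; you have simply, and correctly, recapitulated why it is hard.
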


More precisely, for exponents of type $s=p^a$ or $s=p^aq^b$ one can prove that any vanishing sum of $s$-roots of unity decomposes a sum of cycles, but this is not true in general. See \cite{lle}. Here is actually the simplest counterexample, with $w=e^{2\pi i/30}$:
$$w^5+w^6+w^{12}+w^{18}+w^{24}+w^{25}=0$$

So, what Conjecture 1.8 says is that such a ``tricky sum'' cannot be used for constructing a complex Hadamard matrix. We refer to \cite{bbs} for further details on this conjecture.

Let $C_N$ be the real algebraic manifold formed by the $N\times N$ complex Hadamard matrices. We denote by $M_x$ an unspecified neighborhood of a point in a manifold, $x\in M$. Also, for $q\in\mathbb T_1$, meaning that $q\in\mathbb T$ is close to $1$, we define $q^r$ with $r\in\mathbb R$ by $(e^{it})^r=e^{itr}$.

\begin{proposition}
For $H\in C_N$ and $A\in M_N(\mathbb R)$, the following are equivalent:
\begin{enumerate}
\item $H_{ij}^q=H_{ij}q^{A_{ij}}$ is an Hadamard matrix, for any $q\in\mathbb T_1$.

\item $\sum_kH_{ik}\bar{H}_{jk}q^{A_{ik}-A_{jk}}=0$, for any $i\neq j$ and any $q\in\mathbb T_1$.

\item $\sum_kH_{ik}\bar{H}_{jk}\varphi(A_{ik}-A_{jk})=0$, for any $i\neq j$ and any $\varphi:\mathbb R\to\mathbb C$.

\item $\sum_{k\in E_{ij}^r}H_{ik}\bar{H}_{jk}=0$ for any $i\neq j$ and $r\in\mathbb R$, where $E_{ij}^r=\{k|A_{ik}-A_{jk}=r\}$.
\end{enumerate}
\end{proposition}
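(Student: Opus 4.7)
The cycle of implications I would establish is $(1)\Leftrightarrow(2)$, then $(4)\Rightarrow(3)\Rightarrow(2)$, and finally $(2)\Rightarrow(4)$, which is the substantive step.

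For $(1)\Leftrightarrow(2)$, observe that $|H_{ij}^q|=|H_{ij}|\cdot|q|^{A_{ij}}=1$ automatically since $q\in\mathbb T$ and $A_{ij}\in\mathbb R$. Hence $H^q$ is Hadamard iff its rows are pairwise orthogonal, and the $(i,j)$-scalar product expands directly as
$$\sum_k H_{ik}^q\overline{H_{jk}^q}=\sum_k H_{ik}\bar H_{jk}\,q^{A_{ik}-A_{jk}},$$
since $\overline{q^{A_{jk}}}=q^{-A_{jk}}$ for $q\in\mathbb T$. So $(1)$ amounts to this sum vanishing for all $q\in\mathbb T_1$, which is $(2)$.

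The implication $(4)\Rightarrow(3)$ is a plain bookkeeping step: for any $\varphi:\mathbb R\to\mathbb C$, fix $i\neq j$, group the sum over $k$ according to the fibers $E_{ij}^r$, and use that $\varphi(A_{ik}-A_{jk})$ is constant equal to $\varphi(r)$ on each $E_{ij}^r$. Then $(3)\Rightarrow(2)$ is an instantiation: pick $\varphi(x)=q^x$ (well-defined by the convention $(e^{it})^x=e^{itx}$ used in the paper), and $(3)$ applied to this $\varphi$ is exactly $(2)$.

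The real content is $(2)\Rightarrow(4)$, which I expect to be the main obstacle, since one is extracting fiberwise cancellation from a one-parameter family identity. Fix $i\neq j$, enumerate the finitely many distinct values $r_1,\ldots,r_s\in\mathbb R$ taken by $A_{ik}-A_{jk}$ as $k$ varies, and set $c_l=\sum_{k\in E_{ij}^{r_l}}H_{ik}\bar H_{jk}$. Writing $q=e^{it}$, condition $(2)$ becomes
$$\sum_{l=1}^s c_l\,e^{it\,r_l}=0\qquad\text{for all $t$ in a neighborhood of }0.$$
The functions $t\mapsto e^{itr_l}$ with distinct real $r_l$ are linearly independent over $\mathbb C$; the cleanest way to conclude is to differentiate $m$ times at $t=0$ for $m=0,1,\ldots,s-1$, which yields the Vandermonde linear system $\sum_l c_l\,r_l^m=0$ in the unknowns $c_l$. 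Since the $r_l$ are pairwise distinct, the Vandermonde matrix is invertible and all $c_l$ vanish, which is exactly $(4)$. This closes the loop.
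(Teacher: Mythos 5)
Your proof is correct and follows the same chain of implications as the paper's: $(1)\Leftrightarrow(2)$, $(2)\Rightarrow(4)\Rightarrow(3)\Rightarrow(2)$. The only difference is cosmetic: where the paper simply cites the linear independence of the power functions $q\mapsto q^r$ near $q=1$, you supply a concrete proof via repeated differentiation at $t=0$ and the invertibility of the Vandermonde matrix in the distinct exponents $r_l$, which is a clean and valid way to justify that step.
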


\begin{proof}
All the equivalences are elementary, and can be proved as follows:

$(1)\iff(2)$ Indeed, the scalar products between the rows of $H^q$ are:
$$<H^q_i,H^q_j>=\sum_kH_{ik}q^{A_{ik}}\bar{H}_{jk}\bar{q}^{A_{jk}}=\sum_kH_{ik}\bar{H}_{jk}q^{A_{ik}-A_{jk}}$$

$(2)\implies(4)$ This follows from the following formula, and from the fact that the power functions $\{q^r|r\in\mathbb R\}$ over the unit circle $\mathbb T$ are linearly independent:
$$\sum_kH_{ik}\bar{H}_{jk}q^{A_{ik}-A_{jk}}=\sum_{r\in\mathbb R}q^r\sum_{k\in E_{ij}^r}H_{ik}\bar{H}_{jk}$$

$(4)\implies(3)$ This follows from the following formula:
$$\sum_kH_{ik}\bar{H}_{jk}\varphi(A_{ik}-A_{jk})=\sum_{r\in\mathbb R}\varphi(r)\sum_{k\in E_{ij}^r}H_{ik}\bar{H}_{jk}$$

$(3)\implies(2)$ This simply follows by taking $\varphi(r)=q^r$.
\end{proof}

Observe that in the above statement the condition (4) is purely combinatorial.

In order to understand the above type of deformations, it is convenient to enlarge attention to all types of deformations. We keep using the neighborhood notation $M_x$ introduced above, and we consider functions of type $f:M_x\to N_y$, which by definition satisfy $f(x)=y$. With these conventions, we introduce the following notions:

\begin{definition}
Let $H\in M_N(\mathbb C)$ be a complex Hadamard matrix.
\begin{enumerate}
\item A deformation of $H$ is a smooth function $f:\mathbb T_1\to (C_N)_H$.

\item The deformation is called ``affine'' if $f_{ij}(q)=H_{ij}q^{A_{ij}}$, with $A\in M_N(\mathbb R)$.

\item We call ``trivial'' the deformations of type $f_{ij}(q)=H_{ij}q^{a_i+b_j}$, with $a,b\in\mathbb R^N$.
\end{enumerate}
\end{definition}

Here the adjective ``affine'' comes from $f_{ij}(e^{it})=H_{ij}e^{iA_{ij}t}$, because the function $t\to A_{ij}t$ which produces the exponent is indeed affine. As for the adjective ``trivial'', this comes from the fact that $f(q)=(H_{ij}q^{a_i+b_j})_{ij}$ is obtained from $H$ by multiplying the rows and columns by certain numbers in $\mathbb T$, so it is automatically Hadamard. See \cite{tz1}.

The basic example of an affine deformation comes from the Di\c t\u a deformations $H\otimes_QK$, by taking all parameters $q_{ij}\in\mathbb T$ to be powers of $q\in\mathbb T$. As an example, here are the exponent matrices coming from the left and right Di\c t\u a deformations of $F_2\otimes F_2$:
$$A_l=
\begin{pmatrix}
a&a&b&b\\
c&c&d&d\\ 
a&a&b&b\\
c&c&d&d
\end{pmatrix}\quad\quad\quad
A_r=
\begin{pmatrix}
a&b&a&b\\
a&b&a&b\\ 
c&d&c&d\\
c&d&c&d
\end{pmatrix}$$

In order to investigate the above types of deformations, we will use the corresponding tangent vectors. So, let us first recall that the manifold $C_N$ is given by:
$$C_N=M_N(\mathbb T)\cap\sqrt{N}U_N$$

This observation leads to the following definitions, where in the first part we denote by $T_xM$ the tangent space to a point in a smooth manifold, $x\in M$:

\begin{definition}
Associated to a point $H\in C_N$ are the following objects:
\begin{enumerate}
\item The enveloping tangent space: $\widetilde{T}_HC_N=T_HM_N(\mathbb T)\cap T_H\sqrt{N}U_N$.

\item The tangent cone $T_HC_N$: the set of tangent vectors to the deformations of $H$.

\item The affine tangent cone $T_H^\circ C_N$: same as above, using affine deformations only.

\item The trivial tangent cone $T_H^\times C_N$: as above, using trivial deformations only.
\end{enumerate}
\end{definition}

Observe that $\widetilde{T}_HC_N,T_H^\times C_N$ are real linear spaces, and that $T_HC_N,T_H^\circ C_N$ are two-sided cones, in the sense that they satisfy $\lambda\in\mathbb R,A\in T\implies\lambda A\in T$.

Observe also that we have inclusions of cones, as follows:
$$T_H^\times C_N\subset T_H^\circ C_N\subset T_HC_N\subset\widetilde{T}_HC_N$$

In more algebraic terms now, the above tangent cones are best described by the corresponding matrices, as follows:

\begin{theorem}
The cones $T_H^\times C_N\subset T_H^\circ C_N\subset T_HC_N\subset\widetilde{T}_HC_N$ are as follows:
\begin{enumerate}
\item $\widetilde{T}_HC_N$ can be identified with the linear space formed by the matrices $A\in M_N(\mathbb R)$ satisfying $\sum_kH_{ik}\bar{H}_{jk}(A_{ik}-A_{jk})=0$, for any $i,j$.

\item $T_HC_N$ consists of those matrices $A\in M_N(\mathbb R)$ appearing as $A_{ij}=g_{ij}'(0)$, where $g:M_N(\mathbb R)_0\to M_N(\mathbb R)_0$ satisfies $\sum_kH_{ik}\bar{H}_{jk}e^{i(g_{ik}(t)-g_{jk}(t))}=0$ for any $i,j$.

\item $T^\circ_HC_N$ is formed by the matrices $A\in M_N(\mathbb R)$ satisfying $\sum_kH_{ik}\bar{H}_{jk}q^{A_{ik}-A_{jk}}=0$, for any $i\neq j$ and any $q\in\mathbb T$.

\item $T^\times_HC_N$ is formed by the matrices $A\in M_N(\mathbb R)$ which are of the form $A_{ij}=a_i+b_j$, for certain vectors $a,b\in\mathbb R^N$.
\end{enumerate}
\end{theorem}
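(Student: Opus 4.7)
The overall strategy is that each of the four characterizations is an unwinding of its respective definition, with the bulk of the actual computation concentrated in part (1); parts (2)--(4) are then essentially tautological once the definitions are written out in coordinates.

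For part (1), I would compute $T_H M_N(\mathbb T)$ and $T_H\sqrt{N}U_N$ separately and intersect. A curve through $H$ inside the torus $M_N(\mathbb T)$ has the form $H_{ij}(t)=H_{ij}e^{i\phi_{ij}(t)}$, so at $t=0$ its derivative is $\dot H_{ij}=iA_{ij}H_{ij}$ with $A\in M_N(\mathbb R)$; this identifies $T_HM_N(\mathbb T)\simeq M_N(\mathbb R)$. For $\sqrt{N}U_N$, differentiating $MM^*=NI$ gives $\dot M M^*+M\dot M^*=0$, i.e.\ $\dot M M^*$ is skew-Hermitian. Substituting $\dot M_{ij}=iA_{ij}H_{ij}$ and computing the $(i,j)$ entry, the skew-Hermitian condition reduces to $\sum_k H_{ik}\bar H_{jk}(A_{ik}-A_{jk})=0$, which is exactly the equation in the statement.

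For parts (2)--(4) I would just unpack the definitions. In (2), a smooth deformation $f:\mathbb T_1\to(C_N)_H$ has unimodular entries, so can be written $f_{ij}(e^{it})=H_{ij}e^{ig_{ij}(t)}$ with $g_{ij}(0)=0$; the Hadamard row-orthogonality transcribes directly to the stated equation, and the tangent vector reads $A_{ij}=g'_{ij}(0)$. In (3), an affine deformation $H^q_{ij}=H_{ij}q^{A_{ij}}$ is Hadamard for all $q\in\mathbb T_1$ iff the condition of Proposition 1.9, part (2), holds; passage from $\mathbb T_1$ to all of $\mathbb T$ is automatic by analytic continuation. In (4), a trivial deformation $f_{ij}(e^{it})=H_{ij}e^{i(a_i+b_j)t}$ has tangent $A_{ij}=a_i+b_j$, and conversely any such $A$ arises this way. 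The only place where any work occurs is the computation in (1), and the sole subtlety there is bookkeeping: one must verify that the skew-Hermitian condition at $(i,j)$ is the complex conjugate of that at $(j,i)$, so produces exactly one complex equation per unordered pair, and that the diagonal case $i=j$ is automatic. I do not foresee any conceptual obstacle.
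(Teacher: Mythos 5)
Your argument is correct and follows essentially the same route as the paper, with the real content concentrated in part (1). The only presentational difference there is that the paper works in real coordinates $(X_{ij},Y_{ij})$ via the differentials $d|H_{ij}|^2$ and $\dot L_{ij}$ and imposes $\langle\xi,\dot L_{ij}\rangle=0$, whereas you differentiate the complex constraint $MM^*=NI$ directly and read off the skew-Hermitian condition $i\sum_k H_{ik}\bar H_{jk}(A_{ik}-A_{jk})=0$; the two computations are equivalent, and your treatment of (2)--(4), including the analytic-continuation remark extending $q\in\mathbb T_1$ to $q\in\mathbb T$ in (3) (which also follows from the purely combinatorial reformulation in Proposition 1.9(4)), matches the paper's.
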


\begin{proof}
All these assertions can be deduced by using basic differential geometry:

(1) This result is from \cite{ban}, the idea being as follows. First, $M_N(\mathbb T)$ is defined by the algebraic relations $|H_{ij}|^2=1$, and with $H_{ij}=X_{ij}+iY_{ij}$ we have:
$$d|H_{ij}|^2=d(X_{ij}^2+Y_{ij}^2)=2(X_{ij}\dot{X}_{ij}+Y_{ij}\dot{Y}_{ij})$$

Now since an arbitrary vector $\xi\in T_HM_N(\mathbb C)$, written as $\xi=\sum_{ij}\alpha_{ij}\dot{X}_{ij}+\beta_{ij}\dot{Y}_{ij}$, belongs to $T_HM_N(\mathbb T)$ if and only if $\langle\xi,d|H_{ij}|^2\rangle=0$ for any $i,j$, we obtain:
$$T_HM_N(\mathbb T)=\left\{\sum_{ij}A_{ij}(Y_{ij}\dot{X}_{ij}-X_{ij}\dot{Y}_{ij})\Big|A_{ij}\in\mathbb R\right\}$$

We also know that $\sqrt{N}U_N$ is defined by the algebraic relations $\langle H_i,H_j\rangle=N\delta_{ij}$, where $H_1,\ldots,H_N$ are the rows of $H$. The relations $\langle H_i,H_i\rangle=N$ being automatic for the matrices $H\in M_N(\mathbb T)$, if for $i\neq j$ we let $L_{ij}=\langle H_i,H_j\rangle$, then we have:
$$\widetilde{T}_HC_N=\left\{\xi\in T_HM_N(\mathbb T)|\langle\xi,\dot{L}_{ij}\rangle=0,\,\forall i\neq j\right\}$$

On the other hand, differentiating the formula of $L_{ij}$ gives:
$$\dot{L}_{ij}=\sum_k(X_{ik}+iY_{ik})(\dot{X}_{jk}-i\dot{Y}_{jk})+(X_{jk}-iY_{jk})(\dot{X}_{ik}+i\dot{Y}_{ik})$$

Now if we pick $\xi\in T_HM_N(\mathbb T)$, written as above in terms of $A\in M_N(\mathbb R)$, we obtain:
$$\langle\xi,\dot{L}_{ij}\rangle=i\sum_k\bar{H}_{ik}H_{jk}(A_{ik}-A_{jk})$$

Thus we have reached to the description of $\widetilde{T}_HC_N$ in the statement. 

(2) Pick an arbitrary deformation, and write it as $f_{ij}(e^{it})=H_{ij}e^{ig_{ij}(t)}$. Observe first that the Hadamard condition corresponds to the equations in the statement, namely:
$$\sum_kH_{ik}\bar{H}_{jk}e^{i(g_{ik}(t)-g_{jk}(t))}=0$$

Observe also that by differentiating this formula at $t=0$, we obtain:
$$\sum_kH_{ik}\bar{H}_{jk}(g_{ik}'(0)-g_{jk}'(0))=0$$

Thus the matrix $A_{ij}=g_{ij}'(0)$ belongs indeed to $\widetilde{T}_HC_N$, so we obtain in this way a certain map $T_HC_N\to\widetilde{T}_HC_N$. In order to check that this map is indeed the correct one, we have to verify that, for any $i,j$, the tangent vector to our deformation is given by:
$$\xi_{ij}=g_{ij}'(0)(Y_{ij}\dot{X}_{ij}-X_{ij}\dot{Y}_{ij})$$

But this latter verification is just a one-variable problem. So, by dropping all $i,j$ indices (which is the same as assuming $N=1$), we have to check that for any point $H\in\mathbb T$, written $H=X+iY$, the tangent vector to the deformation $f(e^{it})=He^{ig(t)}$ is:
$$\xi=g'(0)(Y\dot{X}-X\dot{Y})$$

But this is clear, because the unit tangent vector at $H\in\mathbb T$ is $\eta=-i(Y\dot{X}-X\dot{Y})$, and  its coefficient coming from the deformation is $(e^{ig(t)})'_{|t=0}=-ig'(0)$. 

(3) Observe first that by taking the derivative at $q=1$ of the condition (2) in Proposition 1.9, of just by using the condition (3) there with the function $\varphi(r)=r$, we get:
$$\sum_kH_{ik}\bar{H}_{jk}\varphi(A_{ik}-A_{jk})=0$$

Thus we have a map $T_H^\circ C_N\to\widetilde{T}_HC_N$, and the fact that is map is indeed the correct one comes for instance from the computation in (2), with $g_{ij}(t)=A_{ij}t$.

(4) Observe first that the Hadamard matrix condition is satisfied:
$$\sum_kH_{ik}\bar{H}_{jk}q^{A_{ik}-A_{jk}}
=q^{a_i-a_j}\sum_kH_{ik}\bar{H}_{jk}
=\delta_{ij}$$

As for the fact that $T_H^\times C_N$ is indeed the space in the statement, this is clear.
\end{proof}

Let now $D_N\subset C_N$ be the real algebraic manifold formed by all the dephased $N\times N$ complex Hadamard matrices. Observe that we have a quotient map $C_N\to D_N$, obtained by dephasing. With this notation, we have the following refinement of (4) above:

\begin{proposition}
We have a direct sum decomposition of cones
$$T_H^\circ C_N=T_H^\times C_N\oplus T_H^\circ D_N$$
where at right we have the affine tangent cone to the dephased manifold $C_N\to D_N$.
\end{proposition}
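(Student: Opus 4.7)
The plan is to produce an explicit canonical decomposition of each $A \in T_H^\circ C_N$ as a sum $A = T + D$ with $T \in T_H^\times C_N$ and $D \in T_H^\circ D_N$. Motivated by dephasing, the natural candidate is
$$T_{ij} = A_{i0} + A_{0j} - A_{00}, \qquad D_{ij} = A_{ij} - T_{ij}.$$
By Theorem 1.12(4), taking $a_i = A_{i0}$ and $b_j = A_{0j} - A_{00}$ shows immediately that $T \in T_H^\times C_N$. A direct computation gives $D_{0j} = D_{i0} = 0$ for all $i,j$, which is exactly the infinitesimal form of the dephasing condition: the affine deformation $H_{ij} q^{D_{ij}}$ leaves the first row and column of $1$'s unchanged, so it stays inside $D_N$.

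The heart of the argument is to verify $D \in T_H^\circ C_N$, which is not automatic because this cone is not a linear space. I would invoke the combinatorial reformulation of Proposition 1.9(4): membership in $T_H^\circ C_N$ is equivalent to $\sum_{k \in E_{ij}^r} H_{ik}\bar{H}_{jk} = 0$ for all $i \neq j$ and all $r \in \mathbb{R}$, where $E_{ij}^r$ depends only on the differences $A_{ik} - A_{jk}$. Now
$$D_{ik} - D_{jk} = (A_{ik} - A_{jk}) - (A_{i0} - A_{j0}),$$
so the level sets for $D$ are obtained from those for $A$ by the single shift $r \mapsto r - (A_{i0} - A_{j0})$, independently of $k$. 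The required vanishing sums for $D$ are therefore identical (up to reindexing in $r$) to those for $A$, giving $D \in T_H^\circ C_N$; combined with the dephasing this is exactly $D \in T_H^\circ D_N$.

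It remains to check directness. If $A \in T_H^\times C_N \cap T_H^\circ D_N$, then $A_{ij} = a_i + b_j$ with $A_{i0} = A_{0j} = 0$, which forces $a_i = -b_0$ and $b_j = -a_0$; the relation $A_{00} = a_0 + b_0 = 0$ then gives $A_{ij} = -(a_0 + b_0) = 0$, so the intersection is trivial. The argument has no real obstacle: the only delicate point is the non-linearity of $T_H^\circ C_N$, which is circumvented by the shift-invariance in $r$ of the combinatorial condition in Proposition 1.9(4).
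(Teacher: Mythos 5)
Your argument is correct, and it is genuinely more explicit and self-contained than the paper's. The paper disposes of this proposition in two lines by citing \cite{ban} for a linear decomposition of the enveloping tangent space (using the space $M_N^\circ(\mathbb R)$ of matrices vanishing outside the first row and column) and then says ``look at the affine cones''; no details are given for how the affine-cone structure descends. You instead produce the canonical split $T_{ij}=A_{i0}+A_{0j}-A_{00}$, $D=A-T$, and supply exactly the ingredient the paper's sketch leaves implicit: a reason why $D$, which is a difference of an element of the cone and a trivial matrix, still lies in the (non-linear) affine cone. Your observation that the level sets $E_{ij}^r$ in Proposition~1.9(4) transform by a $k$-independent shift $r\mapsto r-(A_{i0}-A_{j0})$ is the right mechanism, and it is a cleaner way to see stability of $T_H^\circ C_N$ under subtraction of trivial tangent vectors than anything stated in the paper. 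The directness computation is also fine, and in fact proves slightly more than you claim: it shows that no nonzero $a_i+b_j$ can vanish on the first row and column, which is what you actually need for uniqueness of the decomposition (since $T_H^\circ D_N$ is a cone, not a subspace, uniqueness is not a formal consequence of trivial intersection, but your calculation covers it). Two tiny points worth making explicit: the whole argument tacitly assumes $H$ is dephased, which the statement also tacitly assumes since it speaks of $T_H^\circ D_N$; and passing from ``$D$ is in $T_H^\circ C_N$ with zero first row and column'' to ``$D\in T_H^\circ D_N$'' uses that the affine deformation $q\mapsto(H_{ij}q^{D_{ij}})$ then has constant first row and column equal to $1$, hence is a curve in $D_N$, which you do note. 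So: correct, and a more elementary route than the paper's.
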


\begin{proof}
As explained in \cite{ban}, if we denote by $M_N^\circ(\mathbb R)$ the set of matrices having $0$ outside the first row and column, we have a direct sum decomposition, as follows:
$$\widetilde{T}_H^\circ C_N=M_N^\circ(\mathbb R)\oplus\widetilde{T}_H^\circ D_N$$

Now by looking at the affine cones, and using Theorem 1.12, this gives the result.
\end{proof}

\section{The defect, revisited}

The following key definition, whose origins go back to the work of Karabegov \cite{kar} and Nicoara \cite{nic}, was given by Tadej and \.Zyczkowski in \cite{tz2}:

\begin{definition}
The undephased defect of a complex Hadamard matrix $H\in C_N$ is the real dimension $d(H)$ of the enveloping tangent space $\widetilde{T}_HC_N=T_HM_N(\mathbb T)\cap T_H\sqrt{N}U_N$.
\end{definition}

In view of Proposition 1.13, it is sometimes convenient to replace $d(H)$ by the related quantity $d'(H)=d(H)-2N+1$, called dephased defect of $H$. See \cite{bbe}, \cite{ta2}, \cite{tz2}. In what follows we will rather use $d(H)$ as defined above, and simply call it ``defect'' of $H$.

Here are a few basic properties of the defect:

\begin{proposition}
Let $H\in C_N$ be a complex Hadamard matrix.
\begin{enumerate}
\item If $H\simeq\widetilde{H}$ then $d(H)=d(\widetilde{H})$.

\item We have $2N-1\leq d(H)\leq N^2$.

\item If $d(H)=2N-1$, the image of $H$ in the dephased manifold $C_N\to D_N$ is isolated.
\end{enumerate}
\end{proposition}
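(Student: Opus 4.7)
The plan is to handle the three claims in order, all resting on the linear description of $\widetilde{T}_HC_N$ from Theorem 1.12(1): the real vector space of $A\in M_N(\mathbb{R})$ satisfying $\sum_k H_{ik}\bar H_{jk}(A_{ik}-A_{jk})=0$ for all $i,j$.

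\textbf{For (1),} by Definition 1.3 an equivalence factors as a composition of two elementary moves. For a phase multiplication $\widetilde H_{ij}=\lambda_i\mu_j H_{ij}$, the product $\widetilde H_{ik}\overline{\widetilde H_{jk}}$ picks up the factor $\lambda_i\bar\lambda_j$ independent of $k$, so the defect equation in $A$ is literally unchanged and the two solution spaces coincide verbatim. For a permutation $\widetilde H_{ij}=H_{\sigma(i),\tau(j)}$, the reindexing $\widetilde A_{ij}=A_{\sigma(i),\tau(j)}$ gives a linear bijection of solution spaces. In either case $d(\widetilde H)=d(H)$.

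\textbf{For (2),} the inclusion $\widetilde{T}_HC_N\subset M_N(\mathbb{R})$ immediately yields $d(H)\le N^2$. For the lower bound, Theorem 1.12(4) identifies $T_H^\times C_N$ as the image of the linear map $(a,b)\mapsto(a_i+b_j)_{ij}$ from $\mathbb{R}^N\oplus\mathbb{R}^N$ to $M_N(\mathbb{R})$. The kernel of this map is the one-dimensional subspace $\{(c\mathbf{1},-c\mathbf{1}):c\in\mathbb{R}\}$, so its image has dimension $2N-1$; combined with the inclusion $T_H^\times C_N\subset\widetilde{T}_HC_N$ from the filtration after Definition 1.11, this gives $d(H)\ge 2N-1$.

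\textbf{For (3),} I would first upgrade the affine splitting of Proposition 1.13 to the enveloping tangent space. Given $A\in\widetilde{T}_HC_N$, set $a_i=A_{i0}$ and $b_j=A_{0j}-A_{00}$; a direct check shows $A^\circ_{ij}:=A_{ij}-(a_i+b_j)$ vanishes on the zeroth row and column. Since the trivial piece $(a_i+b_j)_{ij}$ already lies in $\widetilde{T}_HC_N$ and the defect equations are linear in $A$, the remainder $A^\circ$ also satisfies them, hence belongs to the enveloping tangent space $\widetilde{T}_HD_N$ of the dephased manifold at the image of $H$. The intersection $T_H^\times C_N\cap\widetilde{T}_HD_N$ consists of trivial matrices $(a_i+b_j)$ vanishing on the zeroth row and column, forcing $a_i=b_j=0$; so we obtain the direct sum $\widetilde{T}_HC_N=T_H^\times C_N\oplus\widetilde{T}_HD_N$. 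The hypothesis $d(H)=2N-1$ therefore forces $\widetilde{T}_HD_N=0$, and because $D_N$ is a real algebraic set, vanishing of the Zariski tangent space at a point implies (by the implicit function theorem applied to the defining equations) that the point is isolated in $D_N$. The main obstacle is precisely this last step of part (3): one has to set up the direct sum carefully and justify that trivial Zariski tangent on a real algebraic set forces local discreteness, whereas parts (1) and (2) reduce to routine bookkeeping once Theorem 1.12 is in hand.
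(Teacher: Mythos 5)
Your parts (1) and (2) follow the paper's own route: phase multiplications leave the enveloping system invariant after the common factors cancel, permutations give the obvious reindexing bijection, and the bound $2N-1\le d(H)\le N^2$ drops out of the inclusion $T_H^\times C_N\subset\widetilde{T}_HC_N\subset M_N(\mathbb R)$ together with $\dim T_H^\times C_N=2N-1$ (the paper simply quotes this dimension, you derive it by computing the kernel of $(a,b)\mapsto(a_i+b_j)$, which is a harmless addition).

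For part (3) you take a genuinely different, and in my view sharper, path. The paper's proof argues $d(H)=2N-1\implies T_HC_N=T_H^\times C_N$ by squeezing the tangent cone between two $(2N-1)$-dimensional spaces, then asserts "so any deformation of $H$ is trivial," and concludes isolation. That middle assertion is not immediate: a deformation whose tangent at $q=1$ lies in $T_H^\times C_N$ need not be a trivial deformation, so strictly one needs a curve-selection-type argument to pass from a trivial tangent cone to local rigidity. You instead establish the direct sum $\widetilde{T}_HC_N=T_H^\times C_N\oplus\widetilde{T}_HD_N$ (mirroring the affine splitting of Proposition 1.13, now at the level of enveloping tangent spaces), deduce $\widetilde{T}_HD_N=0$ from the dimension count, and then apply the implicit function theorem to the defining polynomials of the dephased variety: a full-rank Jacobian forces the zero set to be locally a single point. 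This bypasses the tangent cone entirely and replaces the delicate step with a standard IFT argument; what it costs you is the extra bookkeeping of the direct sum and the identification of $\widetilde{T}_HD_N$ with the Zariski tangent space, both of which you carry out correctly. Both proofs are organized around the same dimension count, but yours is the more self-contained of the two.
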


\begin{proof}
All these results are well-known, the proof being as follows:

(1) If we let $K_{ij}=a_ib_jH_{ij}$ with $|a_i|=|b_j|=1$ be a trivial deformation of our matrix $H$, the equations for the enveloping tangent space for $K$ are:
$$\sum_ka_ib_kH_{ik}\bar{a}_j\bar{b}_k\bar{H}_{jk}(A_{ik}-A_{jk})=0$$

By simplifying we obtain the equations for $H$, so $d(H)$ is invariant under trivial deformations. Since $d(H)$ is invariant as well by permuting rows or columns, we are done.

(2) Consider the inclusions $T_H^\times C_N\subset T_HC_N\subset\widetilde{T}_HC_N$. Since $\dim(T_H^\times C_N)=2N-1$, the inequality at left holds indeed. As for the inequality at right, this is clear.

(3) If $d(H)=2N-1$ then $T_HC_N=T_H^\times C_N$, so any deformation of $H$ is trivial. Thus the image of $H$ in the quotient manifold $C_N\to D_N$ is indeed isolated, as stated.
\end{proof}

In the case of Fourier matrices, the computation of the defect is as follows:

\begin{proposition}
Let $F=F_G$ be the Fourier matrix of a group $G=\mathbb Z_{N_1}\times\ldots\mathbb Z_{N_k}$.
\begin{enumerate}
\item $\widetilde{T}_FC_N=\{PF^*|P_{ij}=P_{i+j,j}=\bar{P}_{i,-j}\}$.

\item $d(F)=\sum_{g\in G}[G:<g>]$.

\item $d(F)$ is also the number of $1$ entries of $F$.

\item For $G=\mathbb Z_N$ with $N=\prod_ip_i^{a_i}$ we have $d(F)=N\prod_i(1+a_i-\frac{a_i}{p_i})$.
\end{enumerate}
\end{proposition}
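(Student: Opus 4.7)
The plan is to take the four statements in order: (1) is a direct Fourier computation from Theorem 1.12(1); (2) follows by counting real dimensions of the resulting $P$-space; (3) is a character-theoretic observation; and (4) is a classical arithmetic identity.

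For (1), I would start from the criterion $\sum_k F_{ik}\bar F_{jk}(A_{ik}-A_{jk})=0$ characterizing $A\in\widetilde T_FC_N$. Writing $A=PF^*$, so that $A_{ab}=\sum_c P_{ac}w^{-cb}$, and using the orthogonality $\sum_k w^{(i-j-c)k}=N\delta_{c,i-j}$, the relation collapses to $P_{i,i-j}=P_{j,i-j}$; the substitution $r=i-j$ rewrites this as $P_{ij}=P_{i+j,j}$. Separately, the reality condition on $A$ combined with the change of variable $c\mapsto -c$ inside the Fourier inversion for $\bar A$ forces $P_{ij}=\bar P_{i,-j}$. This yields both defining relations in (1).

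For (2), I would count real dimensions of the $P$-space from (1). For each fixed $j\in G$, the relation $P_{ij}=P_{i+j,j}$ makes $i\mapsto P_{ij}$ factor through the quotient $G/\langle j\rangle$, giving a complex space of dimension $[G:\langle j\rangle]$. The reality constraint $P_{ij}=\bar P_{i,-j}$ pairs $j$ with $-j$: if $j=-j$ the column $j$ becomes real and contributes $[G:\langle j\rangle]$ real parameters; if $j\neq -j$ the column $-j$ is determined by column $j$, so the pair contributes $2[G:\langle j\rangle]$ real parameters, i.e.\ $[G:\langle j\rangle]$ per index. In either case each $j\in G$ contributes $[G:\langle j\rangle]$, summing to $\sum_{g\in G}[G:\langle g\rangle]$. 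For (3), interpret $F_{ij}=\langle i,j\rangle$ as the value at $i$ of the character $\chi_j\in\widehat G$ associated to $j$; the kernel of $\chi_j$ has index in $G$ equal to the order of $\chi_j$, which is $|\langle j\rangle|$, so column $j$ contains exactly $[G:\langle j\rangle]$ entries equal to $1$, and summation over $j$ matches (2).

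For (4), in the cyclic case $G=\mathbb Z_N$ we have $[G:\langle g\rangle]=\gcd(N,g)$, so the formula from (2) becomes Pillai's sum $\sum_{g=0}^{N-1}\gcd(N,g)=\sum_{d\mid N}d\,\varphi(N/d)$. This Dirichlet convolution $\mathrm{id}*\varphi$ is multiplicative, and on a prime power $p^a$ one computes it as $p^a+a(p^a-p^{a-1})=p^a(1+a-a/p)$; taking the product over the prime factors of $N$ gives the stated formula. The only delicate step is the index bookkeeping in (1): keeping track of which of $F$, $\bar F$, $F^*$ appears where, and of the sign of $r=i-j$, so that the periodicity emerges in the exact form $P_{ij}=P_{i+j,j}$. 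Once this is done, the remaining parts are routine dimension counting and a standard Pillai-type evaluation.
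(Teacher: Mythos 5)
Your proof follows the same route as the paper's at every step: part (1) is the same Fourier inversion, the paper writing the condition as $(AF)_{i,i-j}=(AF)_{j,i-j}$ together with reality of $A=(AF)F^*$, which are precisely your two relations on $P$; part (2) is the same real-dimension count via the pairing $j\leftrightarrow -j$, which the paper packages as an explicit isomorphism with $\bigoplus_{g\in G}C(G/\langle g\rangle,\mathbb R)$ using a partition $G=G_2\sqcup X\sqcup(-X)$. The paper simply defers parts (3) and (4) to the definition of $F$ and to the reference for the defect formula, and your character-kernel count and Pillai's-function evaluation of $\mathrm{id}*\varphi$ on prime powers are correct, self-contained fillings-in of exactly those steps.
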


\begin{proof}
This is basically known from \cite{kar}, \cite{tz2}, with some improvements and generalizations coming from \cite{ban}, \cite{bbe}, \cite{ta2}, and the proof is as follows:

(1) According to Theorem 1.12 (1), the equations are $\sum_kF_{ik}\bar{F}_{jk}(A_{ik}-A_{jk})=0$. But these equations simply read $(AF)_{i,i-j}-(AF)_{j,i-j}=0$, and together with the fact that $A=(AF)F^*$ must be real, this gives the conditions in the statement. See \cite{ban}.

(2) The proof here uses an identification of real vector spaces, as follows:
$$\{P\in M_N(\mathbb C)|P_{ij}=P_{i+j,j}=\bar{P}_{i,-j}\}\simeq\bigoplus_{g\in G}C(G/<g>,\mathbb R)$$

Indeed, if we let $G_2=\{g\in G|2g=0\}$, and then choose a partition of type $G=G_2\sqcup X\sqcup(-X)$, the formula giving the above identification is $P=\oplus P_g$, with:
$$P_g(i)=
\begin{cases}
P_{ij}&(j\in G_2)\\
Re(P_{ij})&(j\in X)\\
Im(P_{ij})&(j\in -X)
\end{cases}$$

With this identification in hand, the result follows from (1).

(3) This observation, from \cite{kar}, follows from (2), and from the definition of $F$.

(4) This formula, due to Tadej and \.Zyczkowski, follows from (2) or (3). See \cite{tz2}.
\end{proof}

We discuss now the computation of the various tangent cones at a tensor product $H\otimes K$. The problem that we are interested in, raised by the work in \cite{ta1}, is to determine how the various tangent cones at $H,K$ glue together at $H\otimes K$. First, we have:

\begin{proposition}
We have $\widetilde{T}_HC_N\otimes\widetilde{T}_KC_M\subset\widetilde{T}_{H\otimes K}C_{NM}$.
\end{proposition}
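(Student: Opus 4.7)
The plan is to unpack the defining equations given by Theorem 1.12(1) on both sides and exploit the fact that everything factorizes. Since $\widetilde{T}_{H\otimes K}C_{NM}$ is a real linear subspace of $M_{NM}(\mathbb{R})$, it is enough to prove that every simple tensor $A\otimes B$, with $A\in\widetilde{T}_HC_N$ and $B\in\widetilde{T}_KC_M$, lies in the target space. Since $A,B$ are real, $A\otimes B\in M_{NM}(\mathbb{R})$ is real as well, so only the orthogonality-type conditions need to be checked.

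Concretely, for indices $(i,a)$ and $(j,b)$ I would compute
$$S_{ia,jb}=\sum_{k,c}(H\otimes K)_{ia,kc}\overline{(H\otimes K)}_{jb,kc}\bigl((A\otimes B)_{ia,kc}-(A\otimes B)_{jb,kc}\bigr),$$
which by the definitions of $H\otimes K$ and $A\otimes B$ equals
$$S_{ia,jb}=\sum_{k,c}H_{ik}\bar{H}_{jk}K_{ac}\bar{K}_{bc}\bigl(A_{ik}B_{ac}-A_{jk}B_{bc}\bigr).$$
Splitting the difference in two and separating the $k$-sum from the $c$-sum shows
$$S_{ia,jb}=\Bigl(\sum_kH_{ik}\bar H_{jk}A_{ik}\Bigr)\Bigl(\sum_cK_{ac}\bar K_{bc}B_{ac}\Bigr)-\Bigl(\sum_kH_{ik}\bar H_{jk}A_{jk}\Bigr)\Bigl(\sum_cK_{ac}\bar K_{bc}B_{bc}\Bigr).$$

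Now the hypothesis $A\in\widetilde{T}_HC_N$, rewritten as $\sum_kH_{ik}\bar H_{jk}A_{ik}=\sum_kH_{ik}\bar H_{jk}A_{jk}$, makes the two $k$-factors equal; similarly $B\in\widetilde{T}_KC_M$ makes the two $c$-factors equal. Therefore $S_{ia,jb}=0$ for every $i,j,a,b$, which is exactly the defining equation of $\widetilde{T}_{H\otimes K}C_{NM}$ from Theorem 1.12(1). There is no genuine obstacle here; the only subtlety is the bookkeeping that lets the double sum split as a product of two factors, which is precisely why the tensor product hypothesis on both the matrix $H\otimes K$ and the vector $A\otimes B$ is crucial.
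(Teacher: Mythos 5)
Your proof is correct and takes essentially the same approach as the paper: compute the defining sum for $A\otimes B$ from Theorem 1.12(1), factor it as a product of an $H$-sum and a $K$-sum, and invoke the two hypotheses to conclude each factor is unchanged when $A_{ik}$ is replaced by $A_{jk}$ (resp.\ $B_{ac}$ by $B_{bc}$). The paper records the two factorized expressions separately and notes they are equal, while you write their difference and cancel, but this is the same computation.
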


\begin{proof}
Indeed, for a matrix $A=B\otimes C$, we have the following formulae:
\begin{eqnarray*}
\sum_{kc}(H\otimes K)_{ia,kc}\overline{(H\otimes K)}_{jb,kc}A_{ia,kc}
&=&\sum_kH_{ik}\bar{H}_{jk}B_{ik}\sum_cK_{ac}\bar{K}_{bc}C_{ac}\\
\sum_{kc}(H\otimes K)_{ia,kc}\overline{(H\otimes K)}_{jb,kc}A_{jb,kc}
&=&\sum_kH_{ik}\bar{H}_{jk}B_{jk}\sum_cK_{ac}\bar{K}_{bc}C_{bc}
\end{eqnarray*}

Now by assuming $B\in\widetilde{T}_HC_N$ and $C\in\widetilde{T}_KC_M$, the two quantities appearing above on the right are equal. Thus we have indeed $A\in\widetilde{T}_{H\otimes K}C_{NM}$, and we are done.
\end{proof}

Let us discuss now the computation of the various tangent cones for a Di\c t\u a deformation. Here we basically have just one result, when the deformation matrix is as follows:

\begin{definition}
A rectangular matrix $Q\in M_{N\times M}(\mathbb T)$ is called ``dephased and elsewhere generic'' if the entries on its first row and column are all equal to $1$, and the remaining $(N-1)(M-1)$ entries are algebrically independent over $\mathbb Q$.
\end{definition}

Here the last condition takes of course into account the fact that the entries of $Q$ themselves have modulus 1, the independence assumption being modulo this fact. 

We have the following result, extending the $4\times 4$ computations in \cite{ban}:

\begin{proposition}
If $H\in C_N,K\in C_M$ are dephased, of Butson type, and $Q\in M_{N\times M}(\mathbb T)$ is dephased and elsewhere generic, then $A=(A_{ia,kc})$ belongs to $\widetilde{T}_{H\otimes_QK}C_{NM}$ iff
$$A_{ac}^{ij}=A_{bc}^{ij},\quad A_{ac}^{ij}=\overline{A_{ac}^{ji}},\quad (A_{xy}^{ii})_{xy}\in\widetilde{T}_KC_M$$
hold for any $a,b,c$ and $i\neq j$, where $A_{ac}^{ij}=\sum_kH_{ik}\bar{H}_{jk}A_{ia,kc}$.
\end{proposition}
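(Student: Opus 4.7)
\emph{Proof plan.} The plan is to substitute the explicit Di\c t\u a formula into the defining equation for $\widetilde{T}_{H\otimes_Q K}C_{NM}$ given by Theorem~1.12(1), repackage the resulting sum in terms of the bilinear form $A^{ij}_{ac}$, and then use the algebraic independence of the non-dephased entries of $Q$ to decouple the $M$ terms indexed by $c$.

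First I set up the equation. With $(H\otimes_Q K)_{ia,jb}=Q_{ib}H_{ij}K_{ab}$, Theorem~1.12(1) says $A\in\widetilde{T}_{H\otimes_Q K}C_{NM}$ iff
$$\sum_{kc}Q_{ic}\bar Q_{jc}K_{ac}\bar K_{bc}H_{ik}\bar H_{jk}(A_{ia,kc}-A_{jb,kc})=0$$
for all $(i,a),(j,b)$. Splitting off the $k$-sum in each term, the first piece becomes $\sum_c Q_{ic}\bar Q_{jc}K_{ac}\bar K_{bc}A^{ij}_{ac}$ by definition; for the second, since $A$ is real,
$$\sum_k H_{ik}\bar H_{jk}A_{jb,kc}=\overline{\sum_k H_{jk}\bar H_{ik}A_{jb,kc}}=\overline{A^{ji}_{bc}}.$$
So the tangent equation is equivalent to
$$\sum_c Q_{ic}\bar Q_{jc}K_{ac}\bar K_{bc}\bigl(A^{ij}_{ac}-\overline{A^{ji}_{bc}}\bigr)=0.\qquad(*)$$

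Next I split on whether $i=j$. For $i=j$ all $Q$ and $H$ factors disappear, $A^{ii}_{ac}=\sum_k A_{ia,kc}\in\mathbb R$, and $(*)$ reduces to $\sum_c K_{ac}\bar K_{bc}(A^{ii}_{ac}-A^{ii}_{bc})=0$, which by Theorem~1.12(1) applied to $K$ is precisely condition~3. For $i\neq j$, view $(*)$ as a complex linear relation among the $M$ quantities $\{Q_{ic}\bar Q_{jc}\}_{c=0}^{M-1}$ whose coefficients $K_{ac}\bar K_{bc}(A^{ij}_{ac}-\overline{A^{ji}_{bc}})$ do not involve $Q$. Because $H$ and $K$ are Butson, these coefficients lie in a fixed cyclotomic extension of the $\mathbb Q$-vector space spanned by the $A$-entries; because the $(N-1)(M-1)$ non-dephased entries of $Q$ are algebraically independent over $\mathbb Q$, the monomials $\{Q_{ic}\bar Q_{jc}\}_c$ are linearly independent over this coefficient ring. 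Hence each $c$-term of $(*)$ vanishes, and since $K_{ac}\bar K_{bc}\neq 0$ I conclude $A^{ij}_{ac}=\overline{A^{ji}_{bc}}$ for all $a,b,c$. Setting $a=b$ yields condition~2, and the fact that the identity holds for \emph{every} pair $(a,b)$ then forces $A^{ij}_{ac}$ to be independent of $a$, which (after the harmless relabelling $i\leftrightarrow j$) is condition~1. The converse direction is immediate: conditions~1 and 2 give $A^{ij}_{ac}-\overline{A^{ji}_{bc}}=A^{ij}_{ac}-A^{ij}_{bc}=0$, so $(*)$ holds trivially for $i\neq j$, while for $i=j$ condition~3 is literally $(*)$.

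The main obstacle is the decoupling step: one has to verify that the algebraic independence of the non-dephased $Q$-entries over $\mathbb Q$ really yields $\mathbb Q(\zeta)$-linear independence of the $M$ monomials $\{Q_{ic}\bar Q_{jc}\}_c$, where $\zeta$ is a common Butson root for $H$ and $K$, and then that the $A$-entries (which enter the coefficients only linearly) can be treated as formal indeterminates so as to reduce the question to the case of a purely cyclotomic coefficient field. The Butson hypothesis on $H$ and $K$ is precisely what keeps this coefficient field disjoint from the transcendental extension generated by $Q$.
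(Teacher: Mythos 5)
Your proof is correct and follows essentially the same route as the paper: you substitute the Di\c t\u a formula into Theorem~1.12(1), repackage the double sum via the quantities $A^{ij}_{ac}$ to get $\sum_c Q_{ic}\bar Q_{jc}K_{ac}\bar K_{bc}(A^{ij}_{ac}-\overline{A^{ji}_{bc}})=0$, use genericity of $Q$ to decouple the $c$-sum when $i\neq j$, and identify the $i=j$ case with the defining equations for $\widetilde T_K C_M$ after noting $A^{ii}_{ac}\in\mathbb R$. Your elaboration of the linear-independence step (why algebraic independence of the non-dephased $Q$-entries over $\mathbb Q$, together with the Butson hypothesis placing the other coefficients in a cyclotomic field, gives linear independence of the monomials $\{Q_{ic}\bar Q_{jc}\}_c$) is a welcome fleshing-out of what the paper compresses into a remark about restricting to $\bar{\mathbb Q}$; the only cosmetic slip is that the $i\leftrightarrow j$ relabelling you mention at the end is unnecessary, since $A^{ij}_{ac}=\overline{A^{ji}_{bc}}$ combined with the $a=b$ case already gives $A^{ij}_{ac}=A^{ij}_{bc}$ directly.
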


\begin{proof}
Consider the system for the enveloping tangent space, namely:
$$\sum_{kc}(H\otimes_QK)_{ia,kc}\overline{(H\otimes_QK)}_{jb,kc}(A_{ia,kc}-A_{jb,kc})=0$$

We have $(H\otimes_QK)_{ia,jb}=q_{ib}H_{ij}K_{ab}$, and so our system is:
$$\sum_cq_{ic}\bar{q}_{jc}K_{ac}\bar{K}_{bc}\sum_kH_{ik}\bar{H}_{jk}(A_{ia,kc}-A_{jb,kc})=0$$

Consider now the variables $A_{ac}^{ij}=\sum_kH_{ik}\bar{H}_{jk}A_{ia,kc}$ in the statement. We have:
$$\overline{A_{ac}^{ij}}=\sum_k\bar{H}_{ik}H_{jk}A_{ia,kc}=\sum_kH_{jk}\bar{H}_{ik}A_{ia,kc}$$

Thus, in terms of these variables, our system becomes simply:
$$\sum_cq_{ic}\bar{q}_{jc}K_{ac}\bar{K}_{bc}(A_{ac}^{ij}-\overline{A_{bc}^{ji}})=0$$

More precisely, the above equations must hold for any $i,j,a,b$. By distinguishing now two cases, depending on whether $i,j$ are equal or not, the situation is as follows:

(1) Case $i\neq j$. In this case, let us look at the row vector of parameters, namely:
$$(q_{ic}\bar{q}_{jc})_c=(1,q_{i1}\bar{q}_{j1},\ldots,q_{iN}\bar{q}_{jN})$$

Now since $Q$ was assumed to be dephased and elsewhere generic, and because of our assumption $i\neq j$, the entries of the above vector are linearly independent over $\bar{\mathbb Q}$. But, since by linear algebra we can restrict attention to the computation of the solutions over $\bar{\mathbb Q}$, the $i\neq j$ part of our system simply becomes $A_{ac}^{ij}=\overline{A_{bc}^{ji}}$, for any $a,b,c$ and any $i\neq j$. Now by making now $a,b,c$ vary, we are led to the following equations:
$$A_{ac}^{ij}=A_{bc}^{ij},\quad A_{ac}^{ij}=\overline{A_{ac}^{ji}},\quad\forall a,b,c,i\neq j$$

(2) Case $i=j$. In this case the parameters cancel, and our equations become:
$$\sum_cK_{ac}\bar{K}_{bc}(A_{ac}^{ii}-\overline{A_{bc}^{ii}})=0,\quad\forall a,b,c,i$$

On the other hand, we have $A_{ac}^{ii}=\sum_kA_{ia,kc}$, and so our equations become:
$$\sum_cK_{ac}\bar{K}_{bc}(A_{ac}^{ii}-A_{bc}^{ii})=0,\quad\forall a,b,c,i$$

But these are precisely the equations for the space $\widetilde{T}_KC_M$, and we are done.
\end{proof}

Let us go back now to the usual tensor product situation, and look at the affine cones. The problem here is that of finding the biggest subcone of $T_{H\otimes K}^\circ C_{NM}$, obtained by gluing $T_H^\circ C_N,T_K^\circ C_M$. Our answer here, which takes into account the two ``semi-trivial'' cones coming from the left and right Di\c t\u a deformations, is as follows:

\begin{theorem}
The cones $T_H^\circ C_N=\{B\}$ and $T_K^\circ C_M=\{C\}$ glue via the formulae
$$A_{ia,jb}=\lambda B_{ij}+\psi_jC_{ab}+X_{ia}+Y_{jb}+F_{aj}$$
$$A_{ia,jb}=\phi_bB_{ij}+\mu C_{ab}+X_{ia}+Y_{jb}+E_{ib}$$
producing in this way two subcones of the affine cone $T_{H\otimes K}^\circ C_{NM}=\{A\}$.
\end{theorem}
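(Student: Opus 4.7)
The plan is to verify both formulas directly using the characterization of the affine cone from Theorem 1.12(3), namely that $A\in T^\circ_{H\otimes K}C_{NM}$ iff the perturbed matrix $\widetilde H_{ia,jb}=(H\otimes K)_{ia,jb}\,q^{A_{ia,jb}}$ is Hadamard for every $q\in\mathbb T$. For the first formula, the factor $q^{A_{ia,jb}}$ decomposes multiplicatively as
$$q^{A_{ia,jb}}=q^{X_{ia}}\cdot q^{Y_{jb}}\cdot q^{F_{aj}}\cdot q^{\lambda B_{ij}}\cdot q^{\psi_j C_{ab}}.$$
The first two factors are row and column rescalings, which preserve the Hadamard property. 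What remains has the form of a generalized left Di\c t\u a deformation
$$M_{ia,jb}=Q_{aj}\,H^\lambda_{ij}\,K^{(j)}_{ab},$$
where $Q_{aj}=q^{F_{aj}}$, where $H^\lambda_{ij}=H_{ij}q^{\lambda B_{ij}}$ is Hadamard because $\lambda B\in T^\circ_HC_N$ (the affine cone being stable under real scaling), and where for each $j$ the matrix $K^{(j)}_{ab}=K_{ab}q^{\psi_j C_{ab}}$ is Hadamard because $\psi_j C\in T^\circ_KC_M$.

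The novelty compared with Proposition 1.4 is that the $K$-factor now depends on the outer index $j$. However, the Hadamardness of $M$ follows by the same computation: the row-orthogonality sum is
$$\sum_{jb}M_{ia,jb}\overline{M_{i'a',jb}}=\sum_j Q_{aj}\bar Q_{a'j}\,H^\lambda_{ij}\overline{H^\lambda_{i'j}}\sum_b K^{(j)}_{ab}\overline{K^{(j)}_{a'b}},$$
and the inner sum equals $M\delta_{aa'}$ for every $j$, this normalization constant being independent of $j$. Pulling it out and using $|Q_{aj}|=1$ together with the Hadamard condition for $H^\lambda$ yields $MN\,\delta_{aa'}\delta_{ii'}$, as needed.

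The second formula is handled by the symmetric argument: factor out the trivial pieces $q^{X_{ia}},q^{Y_{jb}}$, interpret $q^{E_{ib}}$ as the parameter matrix $Q_{ib}$ of a right Di\c t\u a deformation, and split the remainder as $H^{(b)}_{ij}=H_{ij}q^{\phi_b B_{ij}}$ (Hadamard for each $b$ since $\phi_b B\in T^\circ_HC_N$) together with $K^\mu_{ab}=K_{ab}q^{\mu C_{ab}}$ (Hadamard since $\mu C\in T^\circ_KC_M$). The row-orthogonality computation is dual to the one above, with the inner sum over $j$ of $H^{(b)}_{ij}\overline{H^{(b)}_{i'j}}$ giving $N\delta_{ii'}$ uniformly in $b$.

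The only conceptual obstacle is that the $j$-dependence of $K^{(j)}$ (respectively $b$-dependence of $H^{(b)}$) appears to break the clean factorized form of Proposition 1.4; this is resolved by the observation that the Hadamard normalization of the inner summation does not depend on the parameter. Everything else is a routine verification, and the fact that the resulting deformations are \emph{affine} is transparent from the exponential form $q^{A_{ia,jb}}$ with $A\in M_{NM}(\mathbb R)$.
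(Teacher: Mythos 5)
Your proof is correct and takes essentially the same route as the paper: both verify the $T^\circ$ criterion of Theorem 1.12(3) by factoring the exponent, pulling out the trivial pieces $q^{X_{ia}},q^{Y_{jb}}$, and applying the affine-cone condition for $K$ (resp.\ $H$) to the inner sum with the index-dependent parameter $q^{\psi_j}$ (resp.\ $q^{\phi_b}$). Your framing of the remainder as a generalized Di\c t\u a deformation, with the key remark that the $j$-dependence (resp.\ $b$-dependence) of the inner factor is harmless because its Hadamard normalization $M\delta_{aa'}$ (resp.\ $N\delta_{ii'}$) is uniform in that index, is precisely the observation that lets the paper collapse $P$ to $\delta_{ab}q^{X_{ia}-X_{ja}}\sum_k H_{ik}\bar H_{jk}(q^\lambda)^{B_{ik}-B_{jk}}$; the computations coincide.
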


\begin{proof}
Indeed, the idea is that $X_{ia},Y_{jb}$ are the trivial parameters, and that $E_{ib},F_{aj}$ are the Di\c t\u a parameters. In order to prove the result, we use the criterion in Theorem 1.12 (3) above. So, given a matrix $A=(A_{ia,jb})$, consider the following quantity:
$$P=\sum_{kc}H_{ik}\bar{H}_{jk}K_{ac}\bar{K}_{bc}q^{A_{ia,kc}-A_{jb,kc}}$$

Let us prove now the first statement, namely that for any choice of matrices $B\in T_H^\circ C_N,C\in T_H^\circ C_M$ and of parameters $\lambda,\psi_j,X_{ia},Y_{jb},F_{aj}$, the first matrix $A=(A_{ia,jb})$ constructed in the statement belongs indeed to $T_{H\otimes K}^\circ C_{NM}$. We have: 
$$A_{ia,kc}=\lambda B_{ik}+\psi_kC_{ac}+X_{ia}+Y_{kc}+F_{ak}$$
$$A_{jb,kc}=\lambda B_{jk}+\psi_kC_{bc}+X_{jb}+Y_{kc}+F_{bk}$$

Now by substracting, we obtain:
$$A_{ia,kc}-A_{jb,kc}=\lambda(B_{ik}-B_{jk})+\psi_k(C_{ac}-C_{bc})+(X_{ia}-X_{jb})+(F_{ak}-F_{bk})$$

It follows that the above quantity $P$ is given by:
\begin{eqnarray*}
P
&=&\sum_{kc}H_{ik}\bar{H}_{jk}K_{ac}\bar{K}_{bc}q^{\lambda(B_{ik}-B_{jk})+\psi_k(C_{ac}-C_{bc})+(X_{ia}-X_{jb})+(F_{ak}-F_{bk})}\\
&=&q^{X_{ia}-X_{jb}}\sum_kH_{ik}\bar{H}_{jk}q^{F_{ak}-F_{bk}}q^{\lambda(B_{ik}-B_{jk})}\sum_cK_{ac}\bar{K}_{bc}(q^{\psi_k})^{C_{ac}-C_{bc}}\\
&=&\delta_{ab}q^{X_{ia}-X_{ja}}\sum_kH_{ik}\bar{H}_{jk}(q^\lambda)^{B_{ik}-B_{jk}}=\delta_{ab}\delta_{ij}
\end{eqnarray*}

Thus Theorem 1.12 (3) applies and tells us that we have $A\in T_{H\otimes K}^\circ C_{NM}$, as claimed. In the second case now, the proof is similar. First, we have:
$$A_{ia,kc}=\phi_cB_{ik}+\mu C_{ac}+X_{ia}+Y_{kc}+E_{ic}$$
$$A_{jb,kc}=\phi_cB_{jk}+\mu C_{bc}+X_{jb}+Y_{kc}+E_{jc}$$

Thus by substracting, we obtain:
$$A_{ia,kc}-A_{jb,kc}=\phi_c(B_{ik}-B_{jk})+\mu(C_{ac}-C_{bc})+(X_{ia}-X_{jb})+(E_{ic}-E_{jc})$$

It follows that the above quantity $P$ is given by:
\begin{eqnarray*}
P
&=&\sum_{kc}H_{ik}\bar{H}_{jk}K_{ac}\bar{K}_{bc}q^{\phi_c(B_{ik}-B_{jk})+\mu(C_{ac}-C_{bc})+(X_{ia}-X_{jb})+(E_{ic}-E_{jc})}\\
&=&q^{X_{ia}-X_{jb}}\sum_cK_{ac}\bar{K}_{bc}q^{E_{ic}-E_{jc}}q^{\mu(C_{ac}-C_{bc})}\sum_kH_{ik}\bar{H}_{jk}(q^{\phi_c})^{B_{ik}-B_{jk}}\\
&=&\delta_{ij}q^{X_{ia}-X_{ib}}\sum_cK_{ac}\bar{K}_{bc}(q^\mu)^{C_{ac}-C_{bc}}=\delta_{ij}\delta_{ab}
\end{eqnarray*}

Thus Theorem 1.12 (3) applies again, and gives the result.
\end{proof}

We believe Theorem 2.7 above to be ``optimal'', in the sense that  nothing more can be said about the affine tangent space $T_{H\otimes K}^\circ C_{NM}$, in the general case. This claim is supported by various computations for $F_N\otimes F_M$, and by results in \cite{bbe}, \cite{tz2}.

Let us discuss now some rationality questions:

\begin{definition}
The rational defect of $H\in C_N$ is the following number:
$$d_\mathbb Q(H)=\dim_\mathbb Q(\widetilde{T}_HC_N\cap M_N(\mathbb Q))$$
The vector space on the right will be called rational enveloping tangent space at $H$.
\end{definition}

As a first observation, this notion can be extended to all the tangent cones at $H$, and by using an arbitrary field $\mathbb K\subset\mathbb C$ instead of $\mathbb Q$. Indeed, we can set:
$$T_H^\ast C_N(\mathbb K)=T_H^\ast C_N\cap M_N(\mathbb K)$$

However, in what follows we will be interested only in the objects constructed in Definition 2.8. It follows from definitions that $d_\mathbb Q(H)\leq d(H)$, and we have:

\begin{conjecture}[Rationality]
For the Butson matrices we have $d_\mathbb Q(H)=d(H)$.
\end{conjecture}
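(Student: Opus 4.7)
\medskip

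\noindent\textbf{Proof plan.} The inequality $d_\mathbb Q(H)\le d(H)$ is immediate from the inclusion $\widetilde{T}_HC_N\cap M_N(\mathbb Q)\hookrightarrow\widetilde{T}_HC_N$, so the task is to construct enough matrices in $\widetilde{T}_HC_N\cap M_N(\mathbb Q)$ to span a $\mathbb Q$-subspace of dimension $d(H)$.

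The Fourier case $H=F_N$ is essentially a direct corollary of the Main Theorem announced in the Introduction: the parametrization $A_{ij}=\sum_{G\times H\subset\mathbb Z_N}L^{GH}_{\varphi_G(i)\varphi_H(j)}$ (where $H$ on the right-hand side denotes a subgroup, not the matrix) writes every $A\in\widetilde{T}_{F_N}C_N$ as a plain sum of dephased real matrices. Taking the free entries of the $L^{GH}$ in $\mathbb Q$ produces a $\mathbb Q$-spanning family of cardinality $d(F_N)$ by dimension count, hence a $\mathbb Q$-basis.

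For a general Butson matrix $H$ of order $s$, whose defining equations $\sum_kH_{ik}\bar H_{jk}(A_{ik}-A_{jk})=0$ have coefficients in the cyclotomic field $\mathbb Q(\zeta_s)$, my plan is to imitate the Fourier recipe. Two sources of rational tangent vectors are already at hand: the trivial cone $T_H^\times C_N$, of $\mathbb Q$-dimension $2N-1$ by Theorem 1.12(4); and, whenever $H$ admits a Di\c t\u a decomposition $H'\otimes_QH''$ with $Q$ dephased and elsewhere generic, the parametrizations provided by Theorem 2.7, whose parameters $B,C,X,Y,E$ or $B,C,X,Y,F$ can be chosen rationally provided an inductive hypothesis is available for $H'$ and $H''$. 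This should reduce the problem to ``primitive'' Butson matrices admitting no non-trivial Di\c t\u a factorization.

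The hard part is the primitive case: constructing rational tangent vectors directly from the internal arithmetic of $H$. The natural combinatorial input is the cycle decomposition of the orthogonality relations $\sum_kH_{ik}\bar H_{jk}=0$ in the sense of Definition 1.7(2), with each cycle identity translating via Proposition 1.9(4) into a rational candidate constraint on $A$. I expect the argument to stall on two fronts. First, the regularity conjecture (Conjecture 1.8) is itself open for orders $s$ with at least three distinct prime factors, where tricky vanishing sums such as $w^5+w^6+w^{12}+w^{18}+w^{24}+w^{25}=0$ with $w=e^{2\pi i/30}$ can occur, so the very input to this combinatorial construction is missing in that generality. Second, even granting regularity, showing that the cycle-by-cycle rational vectors assemble into a full $\mathbb Q$-basis of $\widetilde{T}_HC_N$ rather than a proper subspace is a non-routine linear-algebra statement, and this is where the subgroup lattice of $\mathbb Z_N$ was doing decisive work in the Fourier case.
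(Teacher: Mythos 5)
The statement you were asked to prove is Conjecture 2.9, and the paper does not prove it: it is stated precisely as a conjecture and left open. Your response is correct in recognizing this and in giving an honest assessment of where a direct attack stalls; that is the right conclusion to reach.

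A few points of comparison against what the paper actually does establish. Your treatment of $H=F_N$ matches Corollary 3.11: once Theorem 3.10 writes every $A\in\widetilde{T}_{F_N}C_N$ as a $\mathbb{Z}$-linear combination of the free entries of the dephased matrices $L^{GH}$, choosing those entries in $\mathbb{Q}$ spans a rational subspace of the same dimension, and the trivial inequality $d_\mathbb{Q}\le d$ finishes it. However, you do not mention Proposition 2.10, which settles the conjecture for Butson exponents $s\in\{2,3,4,6\}$ by a quite different and more elementary observation: for those $s$, the enveloping tangent space equations $\sum_k H_{ik}\bar H_{jk}(A_{ik}-A_{jk})=0$, after separating real and imaginary parts and using small cyclotomic identities (e.g.\ that $x+jy+j^2z=0$ with $x,y,z$ real forces $x=y=z$), reduce to a real linear system with rational coefficients, whose solution space then automatically admits a rational basis. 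That coefficient-field argument is orthogonal both to the Fourier parametrization and to the cycle/regularity route you sketch, and it is the only general Butson mechanism in the paper.

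Your proposed route through the Regularity Conjecture and Di\c{t}\u{a} factorizations is a reasonable heuristic, but as you say it stalls, and two further caveats are worth registering. First, Theorem 2.7 only exhibits two subcones of $T^\circ_{H\otimes K}C_{NM}$ and the paper explicitly does not claim they exhaust it, so an inductive step via tensor products would already need more than the paper supplies. Second, if $Q$ is required to be dephased and elsewhere generic then $H'\otimes_Q H''$ has transcendental entries and is not Butson, so the phrase ``Di\c{t}\u{a} decomposition of a Butson matrix'' in your induction needs to be reformulated (genericity of $Q$ is what makes Proposition 2.6 work, but it is incompatible with being Butson). None of this changes your bottom line: the general conjecture is open, and your diagnosis of why agrees with the paper's treatment of this statement as a conjecture rather than a theorem.
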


In fact, the original statement in \cite{ban} is that the above equality should hold in the regular case. However, since the regular case is not known to fully cover the Butson matrix case, as explained in Conjecture 1.8, we prefer to state our conjecture as above.

Let $C_N(s)$ be the set of $N\times N$ complex Hadamard matrices having as entries the $s$-th roots of unity. With this notation, we have the following elementary result:

\begin{proposition}
The rationality conjecture holds for $H\in C_N(s)$ with $s=2,3,4,6$.
\end{proposition}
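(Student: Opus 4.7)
The plan is to exploit the fact that for $s\in\{2,3,4,6\}$, the cyclotomic field $\mathbb{Q}(\zeta_s)$ has $\mathbb{Q}$-degree $\varphi(s)\leq 2$, so that the defining linear system for $\widetilde{T}_HC_N$ is essentially a $\mathbb{Q}$-linear system in disguise.

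First I would recall from Theorem 1.12 (1) that $\widetilde{T}_HC_N$ is the real solution space of the linear system
$$\sum_k H_{ik}\bar{H}_{jk}(A_{ik}-A_{jk})=0,\qquad i,j=1,\ldots,N,$$
in the unknowns $A\in M_N(\mathbb{R})$. When $H\in C_N(s)$, each coefficient $H_{ik}\bar{H}_{jk}$ is an $s$-th root of unity, hence lies in $\mathbb{Z}[\zeta_s]\subset\mathbb{Q}(\zeta_s)$.

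The key observation is that for $s=2,3,4,6$ we have $\varphi(s)\in\{1,2\}$, so that $\mathbb{Q}(\zeta_s)$ is either $\mathbb{Q}$ itself (for $s=2$) or a quadratic extension $\mathbb{Q}(\sqrt{d})$ with $d\in\{-1,-3\}$ (for $s=3,4,6$). Fix a $\mathbb{Q}$-basis $\{1\}$ or $\{1,\sqrt{d}\}$ and expand each coefficient $H_{ik}\bar{H}_{jk}=p_{ijk}+q_{ijk}\sqrt{d}$ with $p_{ijk},q_{ijk}\in\mathbb{Q}$. Linear independence of $1,\sqrt{d}$ over $\mathbb{Q}$ means each complex equation above is equivalent to the pair of rational equations
$$\sum_k p_{ijk}(A_{ik}-A_{jk})=0,\qquad \sum_k q_{ijk}(A_{ik}-A_{jk})=0.$$
Thus $\widetilde{T}_HC_N$ coincides with the real solution space in $\mathbb{R}^{N^2}$ of a finite linear system with coefficients in $\mathbb{Q}$.

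The conclusion is then the standard fact that a homogeneous linear system over $\mathbb{Q}$ in finitely many unknowns has the same dimension over $\mathbb{R}$ and over $\mathbb{Q}$ (any $\mathbb{Q}$-basis of the rational solution space remains $\mathbb{R}$-linearly independent and spans the real solution space, by Gaussian elimination over $\mathbb{Q}$). Applied to our system, this gives $d_\mathbb{R}(H)=d_\mathbb{Q}(H)$, which is exactly the rationality conjecture at $H$. There is no real obstacle here: the whole point is that the hypothesis $\varphi(s)\leq 2$ allows the complex equations to be split into rational ones without losing information, which is precisely why extending past $s\in\{2,3,4,6\}$ (where $\varphi(s)\geq 4$) is substantially harder and requires the more elaborate Fourier-type decomposition developed elsewhere in the paper.
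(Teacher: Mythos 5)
Your argument is correct and follows essentially the same route as the paper's proof: reduce the defining linear system for $\widetilde{T}_HC_N$ to one with rational coefficients, then invoke the standard fact that a homogeneous rational linear system has the same solution dimension over $\mathbb{R}$ as over $\mathbb{Q}$. Where the paper treats $s=2$, $s=3,6$ and $s=4$ by three slightly different ad hoc manipulations, you unify them via the single observation that $[\mathbb{Q}(\zeta_s):\mathbb{Q}]\leq 2$, which is a cleaner framing.

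One small slip worth fixing: you justify the splitting of each complex equation into two rational equations by the $\mathbb{Q}$-linear independence of $1$ and $\sqrt{d}$. But the unknowns $A_{ik}$ are real, not rational, so $\mathbb{Q}$-linear independence alone does not give the split. What actually makes it work is that $d<0$, hence $\sqrt{d}$ is purely imaginary, so the equation separates into its real and imaginary parts. For a real quadratic field ($d>0$) the same $\mathbb{Q}$-linear independence of $1,\sqrt{d}$ would hold, yet the split would fail over $\mathbb{R}$. Since $d\in\{-1,-3\}$ in all cases here, your conclusion is unaffected; simply replace the appeal to $\mathbb{Q}$-linear independence by taking real and imaginary parts, which is also what the paper does explicitly in its $s=4$ case.
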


\begin{proof}
Let us recall that the equations for the enveloping tangent space are:
$$\sum_kH_{ik}\bar{H}_{jk}(A_{ik}-A_{jk})=0$$

In the case $s=2$ these equations are all real, and have rational ($\pm 1$) coefficients. In the case $s=3,6$ we can use the well-known fact that, with $j=e^{2\pi i/3}$, the real solutions of $x+jy+j^2z=0$ are those satisfying $x=y=z$, and we conclude that once again our system, after some manipulations, is equivalent to a real system having rational coefficients. As for the case $s=4$, here the coefficients are $1,i,-1,-i$, so by taking the real and imaginary parts, we reach once again to system with rational coefficients.

Thus, in all cases under investigation, $s=2,3,4,6$, we have a real system with rational coefficients, and the result follows from standard linear algebra.
\end{proof}

Observe that the above method cannot work at $s=5$, where the equation $a+wb+w^2c+w^3d+w^4e=0$ with $w=e^{2\pi i/5}$ and $a,b,c,d,e\in\mathbb R$ can have ``exotic'' solutions.

We will prove in section 3 that Conjecture 2.9 is verified for the Fourier matrices.

Finally, let us go back to Proposition 2.3, and to the formula $d(H)=|1\in H|$ appearing there. We expect this formula to be valid under much more general assumptions:

\begin{problem}
When does the defect formula $d(H)=|1\in H|$ hold?
\end{problem}

We believe for instance that this formula should hold for matrices of type $H=F_G\otimes_QF_K$, under fairly general assumptions on the deformation matrix $Q\in M_{N\times M}(\mathbb T)$. For instance Proposition 2.6 above suggests that the result might hold for $Q$ dephased and elsewhere generic. However, we do not know what the exact assumptions on $Q$ should be.

\section{Fourier matrices}

In this section we state and prove our main result, concerning the enveloping tangent space at $F_N$. As a consequence, we will see that Conjecture 2.9 holds for $F_N$.

Let us first discuss in detail the isotypic case, following \cite{bbe}, \cite{tz1}. First, we have:

\begin{proposition}
For $F=F_p$ with $p$ prime, we have
$$T_F^\times C_p=T_F^\circ C_p=T_HC_p=\widetilde{T}_FC_p$$
and this space consists of the matrices of type $A_{ij}=a_i+b_j$, with $a,b\in\mathbb R^p$.
\end{proposition}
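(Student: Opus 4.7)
The proof reduces to a dimension count, using the chain of inclusions $T_F^\times C_p \subset T_F^\circ C_p \subset T_F C_p \subset \widetilde{T}_F C_p$ established just before Theorem 1.12.

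First I would compute $\dim_{\mathbb{R}}\widetilde{T}_F C_p$ using Proposition 2.3. For $G = \mathbb{Z}_p$ with $p$ prime, the subgroup $\langle g\rangle$ equals $\{0\}$ when $g = 0$ and equals $G$ otherwise, so
$$d(F) = \sum_{g\in G}[G:\langle g\rangle] = p + (p-1)\cdot 1 = 2p-1.$$
(Equivalently, apply part (4) of Proposition 2.3 with $N = p^1$: $d(F) = p(1 + 1 - 1/p) = 2p-1$.)

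Next I would compute $\dim_{\mathbb{R}}T_F^\times C_p$ directly from Theorem 1.12 (4). The linear map $\mathbb{R}^p \times \mathbb{R}^p \to M_p(\mathbb{R})$, $(a,b)\mapsto (a_i + b_j)_{ij}$, has as its kernel precisely the pairs with $a_i + b_j = 0$ for all $i,j$, which forces $a$ and $b$ to be constant and opposite; this kernel is 1-dimensional, so the image has dimension $2p-1$.

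Since $T_F^\times C_p$ is a linear subspace of the linear space $\widetilde{T}_F C_p$, and the two have equal finite dimension $2p-1$, they must coincide. The intermediate cones $T_F^\circ C_p$ and $T_F C_p$ are squeezed between them, so all four objects coincide with the space of matrices $A_{ij} = a_i + b_j$. There is no real obstacle here; the only computational input is the defect formula from Proposition 2.3, which was cited from the literature.
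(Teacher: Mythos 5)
Your proof is correct, but it takes a genuinely different route from the paper's. You run a pure dimension count: compute $d(F_p)=2p-1$ from Proposition 2.3, compute $\dim T_F^\times C_p=2p-1$ from the kernel of $(a,b)\mapsto(a_i+b_j)$, and conclude equality of the two linear spaces by sandwiching. The paper instead argues structurally: it invokes the description $\widetilde{T}_FC_p=\{PF^*\mid P_{ij}=P_{i+j,j}=\bar P_{i,-j}\}$ from Proposition 2.3(1), writes out $A=PF^*$ explicitly for $p=2,3$, and then observes that in general any such $A$ satisfies the rank-one identity $A_{00}+A_{ij}=A_{i0}+A_{0j}$, which directly yields $A_{ij}=a_i+b_j$ with $a_i=A_{i0}-A_{00}$, $b_j=A_{0j}$. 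Your argument is shorter and only uses the defect value, but it is essentially opaque about \emph{why} the matrices have that form; the paper's argument is more computational but constructive, which matters because the subsequent Propositions 3.2--3.6 build an explicit parametrization on top of this one. Indeed, the paper immediately remarks that Proposition 3.1 is ``not fully satisfactory'' precisely because $d(F)=2p-1$ is not visible from it --- your proof reverses the logic by taking that number as input. Both are valid; the paper's choice is dictated by what it needs downstream.
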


\begin{proof}
It is enough to check that we have $T_F^\times C_p=\widetilde{T}_FC_p$, which means that any matrix $A\in\widetilde{T}_FC_p$ should decompose as $A_{ij}=a_i+b_j$, with $a,b\in\mathbb R^p$. At $p=2$ we have:
$$A=PF^*=\begin{pmatrix}a&c\\ b&c\end{pmatrix}\begin{pmatrix}1&1\\ 1&-1\end{pmatrix}=\begin{pmatrix}a+c&a-c\\ b+c&b-c\end{pmatrix}$$

This gives the result. At $p=3$ now, with $w=e^{2\pi i/3}$ we have:
$$P=\begin{pmatrix}a&z&\bar{z}\\ b&z&\bar{z}\\ c&z&\bar{z}\end{pmatrix},\quad\quad 
F^*=\begin{pmatrix}1&1&1\\ 1&w^2&w\\ 1&w&w^2\end{pmatrix}$$

Thus the exponent matrices have a similar look, and this gives the result:
$$A=PF^*=\begin{pmatrix}
a+z+\bar{z}&a+w^2z+w\bar{z}&a+wz+w^2\bar{z}\\
b+z+\bar{z}&b+w^2z+w\bar{z}&b+wz+w^2\bar{z}\\
c+z+\bar{z}&c+w^2z+w\bar{z}&c+wz+w^2\bar{z}
\end{pmatrix}$$

In general now, we have $A_{00}+A_{ij}=A_{i0}+A_{0j}$, and this gives the result.
\end{proof}

Observe that Proposition 3.1 is somehow not fully satisfactory, because the fact that the defect is $d(F)=2p-1$ is not entirely obvious from it. So, let us improve it:

\begin{proposition}
For $F=F_p$, the elements $A\in\widetilde{T}_FC_p$ are the solutions of
$$A_{ij}=L^{00}_{00}+L^{01}_{0j}+L^{10}_{i0}$$
where the $L$ variables are free, with $L^{01}_{0j}=0$ for $j\neq 0$, and $L^{10}_{i0}=0$ for $i\neq 0$.
\end{proposition}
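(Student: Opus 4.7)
The plan is to reduce this statement to Proposition 3.1 by a purely notational rewriting. Proposition 3.1 already provides the structural content: every $A\in\widetilde{T}_FC_p$ has the form $A_{ij}=a_i+b_j$ with $a,b\in\mathbb{R}^p$. What remains here is to re-express this decomposition in a canonical dephased form, compatible with the main theorem, in which the superscripts on $L$ indicate pairs of subgroups $G\times H\subset\mathbb{Z}_N$. For $N=p$ prime, the only subgroups are $\{0\}$ and $\mathbb{Z}_p$, so the reindexing is essentially forced.

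Starting from $A_{ij}=a_i+b_j$, I would set $L^{00}_{00}:=a_0+b_0$, $L^{10}_{i0}:=a_i-a_0$, and $L^{01}_{0j}:=b_j-b_0$. The latter two quantities automatically vanish at the distinguished index (this is the dephasing condition in the statement), and a one-line substitution recovers $A_{ij}=L^{00}_{00}+L^{01}_{0j}+L^{10}_{i0}$. Conversely, any triple $(L^{00},L^{01},L^{10})$ obeying the dephasing constraints reassembles into a matrix of the shape $a_i+b_j$ via $a_i:=L^{10}_{i0}$ and $b_j:=L^{00}_{00}+L^{01}_{0j}$, which Proposition 3.1 places back into $\widetilde{T}_FC_p$.

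I would close by checking that no parameters are redundant: the count $1+(p-1)+(p-1)=2p-1$ agrees with the defect $d(F_p)=2p-1$ from Proposition 2.3(4). This confirms that the dephasing cleanly resolves the one-dimensional ambiguity $(a,b)\leftrightarrow(a+c,b-c)$ inherent in the Proposition 3.1 parametrization. There is no genuine obstacle here: the argument is entirely bookkeeping, and its purpose is to stage the prime case in exactly the notation that the main theorem will use for general~$N$.
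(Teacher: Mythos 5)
Your reduction to Proposition 3.1 via $L^{00}_{00}=a_0+b_0$, $L^{10}_{i0}=a_i-a_0$, $L^{01}_{0j}=b_j-b_0$ is precisely the paper's own argument, which sets $L^{00}_{00}=A_{00}$, $L^{01}_{0j}=A_{0j}-A_{00}$, $L^{10}_{i0}=A_{i0}-A_{00}$ and invokes the identity $A_{ij}=A_{i0}+A_{0j}-A_{00}$ coming from $A_{ij}=a_i+b_j$. You also read the dephasing condition correctly as $L^{01}_{00}=L^{10}_{00}=0$, which is the intended meaning (the statement's ``for $j\neq 0$'' and ``for $i\neq 0$'' are evidently typos for $j=0$ and $i=0$, as confirmed by the paper's own proof and by Definition 3.8), and your parameter count $1+(p-1)+(p-1)=2p-1=d(F_p)$ matches.
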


\begin{proof}
The claim is that there exist free variables $L$, such that:
\begin{eqnarray*}
A_{00}&=&L^{00}_{00}\\
A_{0j}&=&L^{00}_{00}+L^{01}_{0j}\quad (j\neq 0)\\
A_{i0}&=&L^{00}_{00}+L^{10}_{i0}\quad (i\neq 0)\\
A_{ij}&=&L^{00}_{00}+L^{01}_{0j}+L^{10}_{i0}\quad (i,j\neq 0)
\end{eqnarray*}

But this follows from Proposition 3.1, by taking $L^{00}_{00}=A_{00}$, and so on.
\end{proof}

Let us discuss now the case $N=p^2$. We recall that $M_n^\circ(\mathbb R)$ is by definition the set of real $n\times n$ matrices having $0$ on the first row and column. We will need the following key result, regarding the enveloping tangent space to the dephased manifold $C_N\to D_N$:

\begin{proposition}
For $F=F_{p^2}$ with $p$ prime, we have
$$\widetilde{T}_FD_N=\{A\in M_N(\mathbb R)|\exists M\in M_p^\circ(\mathbb R),\,A_{ij}=M_{\bar{i},\bar{j}}\}$$
where $\bar{i},\bar{j}$ are the reminders of $i,j$ modulo $p$.
\end{proposition}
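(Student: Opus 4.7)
The plan is to start from the $P$-matrix characterization in Proposition 2.3(1), namely $\widetilde T_FC_N=\{PF^*\mid P_{ij}=P_{i+j,j}=\bar P_{i,-j}\}$, and then cut out of it the dephased conditions $A_{0,j}=A_{i,0}=0$ while exploiting the fact that $\mathbb Z_{p^2}$ has only three subgroups, $\{0\}\subset\langle p\rangle\subset\mathbb Z_{p^2}$.

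First I would analyse $P$ column by column. The relation $P_{ij}=P_{i+j,j}$ forces column $j$ to be constant on cosets of $\langle j\rangle$, so there are three types: for $j$ invertible modulo $p^2$ the column is constant, $P_{i,j}=c_j$; for $j=pm$ with $m\in\mathbb Z_p\setminus\{0\}$ the column depends only on $\bar i=i\bmod p$, $P_{i,pm}=d_{\bar i,m}$; for $j=0$ the column is arbitrary but real, $P_{i,0}=r_i\in\mathbb R$. The conjugation constraint then becomes $c_{-j}=\bar c_j$ and $d_{\bar i,-m}=\overline{d_{\bar i,m}}$.

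Next I impose dephasing. The condition $A_{0,j}=\sum_k P_{0,k}w^{-jk}=0$ for every $j$ says that the row $(P_{0,k})_k$ has vanishing discrete Fourier transform, hence is identically zero; this kills every $c_j$, every $d_{0,m}$ and $r_0$. The condition $A_{i,0}=\sum_k P_{i,k}=0$ then simplifies to $r_i=-\sum_{m\neq 0}d_{\bar i,m}$, showing in particular that $r_i$ depends only on $\bar i$. With these reductions and the identity $w^{-jpm}=\omega^{-\bar j m}$, where $\omega=e^{2\pi i/p}$, a direct computation gives
$$A_{ij}=r_i+\sum_{m\neq 0}d_{\bar i,m}\,\omega^{-\bar j m},$$
which depends on $i,j$ only through $\bar i,\bar j$. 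Setting $M_{\bar i,\bar j}=A_{ij}$ yields a $p\times p$ matrix which is real (by the conjugation constraint on $d$), whose first row vanishes (since $d_{0,m}=0$ and $r_i=0$ when $\bar i=0$), and whose first column vanishes (by evaluating the above formula at $\bar j=0$ and using the relation defining $r_i$); so $M\in M_p^\circ(\mathbb R)$, proving the forward inclusion.

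For the converse, starting from any $M\in M_p^\circ(\mathbb R)$ I would recover $P$ via discrete Fourier inversion on $\mathbb Z_p$: define $d_{\bar i,m}=\tfrac1p\sum_{\bar j}M_{\bar i,\bar j}\,\omega^{\bar j m}$ for $\bar i,m\neq 0$, put $r_i=-\sum_{m\neq 0}d_{\bar i,m}$, and set the surviving parameters $c_j$, $d_{0,m}$, $r_0$ all equal to zero. Reality of $M$ gives the required conjugation $d_{\bar i,-m}=\overline{d_{\bar i,m}}$, and vanishing of the first row of $M$ gives $d_{0,m}=0$ for free, so the $P$ so constructed satisfies all the hypotheses and produces the prescribed $M$. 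The only non-routine step in the whole argument is the Fourier-vanishing argument that turns ``$A_{0,\cdot}=0$'' into ``$P_{0,\cdot}=0$''; everything else is bookkeeping made easy by the three-subgroup structure of $\mathbb Z_{p^2}$.
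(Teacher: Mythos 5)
Your proof is correct, and it is a genuinely cleaner route than the paper's. The paper establishes this proposition only by explicit matrix computations at $p=2$ and $p=3$, and then appeals to Lemma 3.5 (proved later, by an injectivity-plus-dimension-count argument) for the general case. You instead give a uniform argument for every prime $p$, working directly with the $P$-matrix parametrization of Proposition 2.3(1) and exploiting the chain $\{0\}\subset\langle p\rangle\subset\mathbb Z_{p^2}$ to classify the columns of $P$ into three types ($c_j$ constant, $d_{\bar i,m}$ depending only on $\bar i$, and the free real column $r_i$). The two dephasing equations then have transparent meanings in this parametrization: vanishing of $A_{0,\cdot}$ kills the parameters $c_j$, $d_{0,m}$, $r_0$ by Fourier inversion on $\mathbb Z_{p^2}$, and vanishing of $A_{\cdot,0}$ determines $r_i$ in terms of the $d$'s, making it a function of $\bar i$ alone. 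The resulting formula $A_{ij}=r_i+\sum_{m\neq 0}d_{\bar i,m}\omega^{-\bar j m}$ visibly depends only on $(\bar i,\bar j)$, and the vanishing of the first row and column of $M$ drops out automatically; the converse direction by discrete Fourier inversion on $\mathbb Z_p$ is equally clean, with reality of $M$ translating exactly into the required conjugation constraint on $d$. What your approach buys is a self-contained, prime-uniform proof of Proposition 3.3 that does not rely on the later, more elaborate Lemma 3.5; what the paper's approach buys is that Lemma 3.5 is needed anyway (for arbitrary $p^a$), so the explicit low-$p$ cases here serve mainly as illustration.
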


\begin{proof}
Let us first work out the case $p=2$. Here the defect is $8$, and:
$$P=\begin{pmatrix}a&z&e&\bar{z}\\ b&z&f&\bar{z}\\ c&z&e&\bar{z}\\ d&z&f&\bar{z}\end{pmatrix},\quad\quad F^*=\begin{pmatrix}1&1&1&1\\ 1&-i&-1&i\\ 1&-1&1&-1\\ 1&i&-1&-i\end{pmatrix}$$

Thus with $z=p+iq$ we obtain the following formula:
$$A=PF^*=\begin{pmatrix}
a+e+2p&a-e+2q&a+e-2p&a-e-2q\\
b+f+2p&b-f+2q&b+f-2p&b-f-2q\\
c+e+2p&c-e+2q&c+e-2p&c-e-2q\\
d+f+2p&d-f+2q&d+f-2p&d-f-2q
\end{pmatrix}$$

Now by assuming that $A$ is dephased, as in the statement, we obtain that we have $a=c=e=p=q=0$ and $b=d=-f$, and so our matrix is given by:
$$A=\begin{pmatrix}
0&0&0&0\\
0&-2f&0&-2f\\
0&0&0&0\\
0&-2f&0&-2f
\end{pmatrix}$$

Thus we reach to the conclusion in the statement. At $p=3$ now, we have:
$$A=\begin{pmatrix}
a&x&y&u&z&\bar{z}&\bar{u}&\bar{y}&\bar{x}\\
b&x&y&v&z&\bar{z}&\bar{v}&\bar{y}&\bar{x}\\
c&x&y&w&z&\bar{z}&\bar{w}&\bar{y}&\bar{x}\\
d&x&y&u&z&\bar{z}&\bar{u}&\bar{y}&\bar{x}\\
e&x&y&v&z&\bar{z}&\bar{v}&\bar{y}&\bar{x}\\
f&x&y&w&z&\bar{z}&\bar{w}&\bar{y}&\bar{x}\\
g&x&y&u&z&\bar{z}&\bar{u}&\bar{y}&\bar{x}\\
h&x&y&v&z&\bar{z}&\bar{v}&\bar{y}&\bar{x}\\
i&x&y&w&z&\bar{z}&\bar{w}&\bar{y}&\bar{x}
\end{pmatrix}
\begin{pmatrix}
1&1&1&1&1&1&1&1&1\\
1&w^8&w^7&w^6&w^5&w^4&w^3&w^2&w\\
1&w^7&w^5&w^3&w&w^8&w^6&w^4&w^2\\
1&w^6&w^3&1&w^6&w^3&1&w^6&w^3\\
1&w^5&w&w^6&w^2&w^7&w^3&w^8&w^4\\
1&w^4&w^8&w^3&w^7&w^2&w^6&w&w^5\\
1&w^3&w^6&1&w^3&w^6&1&w^3&w^6\\
1&w^2&w^4&w^6&w^8&w&w^3&w^5&w^7\\
1&w&w^2&w^3&w^4&w^5&w^6&w^7&w^8
\end{pmatrix}$$

By assuming now that $A$ is dephased, we obtain the following equations:
$$a=x=y=z=u=0,\quad d=g=0$$
$$b=e=h=-(v+\bar{v}),\quad c=f=i=-(w+\bar{w})$$

Now set $v=\alpha+i\beta$, $w=\gamma+i\delta$, and consider the following variables:
$$x=-3\alpha+(iw^6-iw^3)\beta,\quad y=-3\alpha+(iw^3-iw^6)\beta$$
$$z=-3\gamma+(iw^6-iw^3)\delta,\quad t=-3\gamma+(iw^3-iw^6)\delta$$

Observe that we have in fact $iw^6-iw^3=\frac{\sqrt{3}}{2}$. In terms of $x,y,z,t$, we have:
$$A=\begin{pmatrix}
0&0&0&0&0&0&0&0&0\\
0&x&y&0&x&y&0&x&y\\
0&z&t&0&z&t&0&z&t\\
0&0&0&0&0&0&0&0&0\\
0&x&y&0&x&y&0&x&y\\
0&z&t&0&z&t&0&z&t\\
0&0&0&0&0&0&0&0&0\\
0&x&y&0&x&y&0&x&y\\
0&z&t&0&z&t&0&z&t
\end{pmatrix}$$

Thus we have reached again to the conclusion in the statement. The general case is similar, and we refer here to the proof of Lemma 3.5 below, at $a=2$.
\end{proof}

Let us add now the trivial deformation part, and reformulate the result a bit as in Proposition 3.2 above. The statement that we obtain is as follows:

\begin{proposition}
For $F=F_{p^2}$, the elements $A\in\widetilde{T}_FC_N$ are the solutions of
$$A_{ij}=L^{00}_{00}+L^{01}_{0\bar{j}}+L^{10}_{\bar{i}0}+L^{02}_{0j}+L^{20}_{i0}+L^{11}_{\bar{i}\bar{j}}$$
where the $L$ variables are free, and dephased, and $\bar{i},\bar{j}$ are the reminders of $i,j$ modulo $p$.
\end{proposition}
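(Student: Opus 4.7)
The plan is to combine Proposition 3.3, which describes the dephased enveloping tangent space $\widetilde T_F D_{p^2}$, with a standard decomposition of $A \in \widetilde T_F C_{p^2}$ into a trivial tangent vector plus a dephased correction. Proposition 3.3 supplies the $L^{11}$ block, while the trivial part absorbs the five remaining $L$ blocks.

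For the inclusion $(\supseteq)$, I would rewrite the candidate formula as
$$A_{ij} = f(j) + g(i) + L^{11}_{\bar i\bar j}, \quad f(j) := L^{00}_{00} + L^{01}_{0\bar j} + L^{02}_{0j}, \quad g(i) := L^{10}_{\bar i 0} + L^{20}_{i0},$$
and verify the enveloping tangent equation $\sum_k w^{(i-j)k}(A_{ik}-A_{jk})=0$ from Theorem 1.12(1) directly. The $f(k)$ terms cancel between rows; the $g(i)-g(j)$ contribution is annihilated by the Fourier orthogonality $\sum_k w^{(i-j)k} = p^2\delta_{ij}$ for $i\neq j$; and for the $L^{11}$ contribution I would split the index as $k = pl + \bar k$ with $l,\bar k\in\{0,\ldots,p-1\}$, so that the sum factors as a sum over $\bar k$ times the character sum $\sum_l \omega^{(i-j)l}$ where $\omega = w^p$. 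One of the two factors always vanishes, according to whether $\bar i = \bar j$ (outer factor kills it) or $\bar i \neq \bar j$ (character sum kills it).

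For the reverse inclusion, given $A \in \widetilde T_F C_{p^2}$, I would set $A^\times_{ij} := A_{i0} + A_{0j} - A_{00}$, which is a trivial tangent vector by Theorem 1.12(4), agreeing with $A$ on the first row and column. The difference $A' := A - A^\times$ then lies in $\widetilde T_F C_{p^2}$ (a real linear space containing $A^\times$) and has vanishing first row and column, so $A' \in \widetilde T_F D_{p^2}$. Proposition 3.3 produces a dephased $M \in M_p^\circ(\mathbb R)$ with $A'_{ij} = M_{\bar i\bar j}$; I take $L^{11} := M$. The trivial piece decomposes tautologically: take $L^{00}_{00} := A_{00}$, $L^{20}_{i0} := A_{i0} - A_{00}$, $L^{02}_{0j} := A_{0j} - A_{00}$, and $L^{01} := 0$, $L^{10} := 0$. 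All dephasing conditions are automatic.

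The substantive step is Proposition 3.3, already in hand, so there is no serious obstacle. The one point worth flagging is that this $L$-decomposition is deliberately redundant: one can freely transfer mass between $L^{02}$ and $L^{01}$ on residue classes $\bar j\neq 0$, and similarly between $L^{20}$ and $L^{10}$, which accounts for the gap $2(p-1)$ between the raw parameter count $3p^2 - 2$ and the actual defect $d(F_{p^2}) = 3p^2 - 2p$ computed via Proposition 2.3(4). The proposition only asserts existence of such a dephased $L$-system for every $A$, so this gauge freedom is harmless; writing the formula in this redundant form is precisely what exposes the subgroup filtration $\{0\} \subset p\mathbb Z_{p^2} \subset \mathbb Z_{p^2}$ underlying the decomposition, preparing for the main theorem of Section 3 which treats arbitrary $F_N$.
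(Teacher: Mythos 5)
Your forward inclusion ($\supseteq$) is fine and in fact more explicit than the paper, which just invokes Lemma 3.5: you verify the tangent equation $\sum_k w^{(i-j)k}(A_{ik}-A_{jk})=0$ term by term, and the character-sum argument for the $L^{11}$ piece (split $k=pl+\bar k$, use $\sum_l \omega^{(i-j)l}=p\,\delta_{p\mid(i-j)}$ together with $p\mid(i-j)\Rightarrow \bar i=\bar j$) is exactly what is needed. This direction never uses the dephasing constraints.

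The reverse inclusion has a real gap, and it is at the step you brush off as ``tautological.'' The sentence ``All dephasing conditions are automatic'' is false for the $L$'s you construct. In the paper's convention (stated just before Lemma 3.5), a matrix $L^{02}$ over $\mathbb Z_1\times\mathbb Z_{p^2}$ is dephased when its support lies in $\mathbb Z_1\times(\mathbb Z_{p^2}-\mathbb Z_p)$, i.e.\ $L^{02}_{0j}$ must vanish for \emph{all} $j$ in the order-$p$ subgroup of $\mathbb Z_{p^2}$, not just at $j=0$. Your choice $L^{02}_{0j}:=A_{0j}-A_{00}$ is generically nonzero at the $p-1$ nonzero multiples of $p$ (the first row of $A$ is unconstrained there), and likewise for $L^{20}$; these are exactly the $2(p-1)$ excess degrees of freedom you count. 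The ensuing ``gauge'' paragraph does not repair this: the freedom you describe transfers mass between $L^{01}_{0\bar j}$ and $L^{02}_{0j}$ only over residue classes $\bar j\neq 0$, which are precisely the classes where your $L^{02}$ already satisfies the dephasing condition; the offending entries of $L^{02}$ all sit at $\bar j = 0$, where $L^{01}$ is identically zero and no transfer is possible. So the gauge, as formulated, cannot zero out what needs zeroing.

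What the proposition actually needs, and what the paper supplies, is the genuine reparametrization: write the trivial part $a_i:=A_{i0}-A_{00}$ (and $b_j:=A_{0j}-A_{00}$) and then split it by residue, $a_i=\beta_{\bar i}+\delta_i$, choosing representatives so that $\beta$ is a dephased vector on $\mathbb Z_p$ and $\delta$ is a dephased vector on $\mathbb Z_{p^2}$, with $L^{10}:=\beta$, $L^{20}:=\delta$. The existence of such a split with the prescribed supports is the nontrivial content, and skipping it means you have produced a formula of the right shape but with non-dephased $L$'s, which is a weaker statement than the proposition (and in particular does not give the injective parametrization that drives the defect count and the rationality corollary). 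In short: direction $\supseteq$ is correct and matches the paper's method; direction $\subseteq$ needs the explicit residue-class splitting of the first row and column, which you asserted away.
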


\begin{proof}
By adding trivial deformations to the formula in Proposition 3.3, and then by reparametrizing these trivial deformations as in Proposition 3.2, we get:
$$A_{ij}=\alpha+a_i+b_j+M_{\bar{i}\bar{j}}$$

Here $a_i,b_j$ are free variables, with the conventions $a_i=0$ for $i\neq 0$, and $b_j=0$ for $j\neq 0$. Now by using the further splitting of indices modulo $p$, we can write:
$$A_{ij}=\alpha+\beta_{\bar{i}}+\gamma_{\bar{j}}+\delta_i+\varepsilon_j+M_{\bar{i}\bar{j}}$$

Here $\beta,\gamma,\delta,\varepsilon$ are now dephased, and this gives the formula in the statement.
\end{proof}

We are now in position of stating and proving our main technical result. We say that a matrix $L^{rs}$ over the group $\mathbb Z_{p^r}\times\mathbb Z_{p^s}$ is dephased if its nonzero entries belong to:
$$X_{rs}=(\mathbb Z_{p^r}-\mathbb Z_{p^{r-1}})\times(\mathbb Z_{p^s}-\mathbb Z_{p^{s-1}})$$

Here, and in what follows, we use the convention $\mathbb Z_{p^{-1}}=\emptyset$. 

\begin{lemma}
For $F=F_{p^a}$, the elements $A\in\widetilde{T}_FC_N$ are the solutions of
$$A_{ij}=\sum_{r+s\leq a}L^{rs}_{p^{a-r}i,p^{a-s}j}$$
where the $L$ variables are free, and form dephased matrices $L^{rs}$.
\end{lemma}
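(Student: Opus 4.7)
The plan is to exhibit the parametrization as a linear isomorphism. Concretely I would prove three things: (i) the map from dephased $L$-data to matrices lands in $\widetilde{T}_FC_N$; (ii) the total real dimension of the parameter space equals $d(F_{p^a})$; and (iii) the parametrization is injective. Items (i)--(iii) together force the map to be a bijection onto $\widetilde{T}_FC_N$, and the lemma follows.

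For (i), I would verify Theorem 1.12(1) directly with $w=e^{2\pi i/p^a}$. Splitting $\sum_kw^{(i-j)k}(A_{ik}-A_{jk})$ into its $(r,s)$-components and slicing $k=m+p^s\ell$ by its residue modulo $p^s$, the inner sum $\sum_\ell w^{(i-j)p^s\ell}$ vanishes unless $p^{a-s}\mid(i-j)$; but in that surviving case the hypothesis $r+s\leq a$ forces $r\leq a-s$, hence $i\equiv j\pmod{p^r}$, hence $L^{rs}_{i\bmod p^r,m}-L^{rs}_{j\bmod p^r,m}=0$ identically. Thus every $(r,s)$-piece vanishes, and, amusingly, dephasing is not even used at this stage.

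For (ii), the telescoping identity $\sum_{s=0}^{a-r}(p^s-p^{s-1})=p^{a-r}$ (with the convention $p^{-1}:=0$) gives
$$\sum_{r+s\leq a}|X_{rs}|=p^a+\sum_{r=1}^{a}(p^r-p^{r-1})p^{a-r}=p^a+a\,p^{a-1}(p-1),$$
which is exactly $d(F_{p^a})$ as computed in Proposition 2.3(4).

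For (iii), the key lemma is a support monotonicity: if $(x_0,y_0)\in X_{r_0s_0}$ and $L^{r's'}_{x_0\bmod p^{r'},y_0\bmod p^{s'}}$ is not killed by dephasing, then $r'\leq r_0$ and $s'\leq s_0$. Indeed $x_0<p^{r_0}$ forces $x_0\bmod p^{r'}=x_0<p^{r'-1}$ for any $r'>r_0$, which excludes $x_0$ from $X_{r'}$. Given this, I would induct on $(r_0,s_0)$ in the product order: under the assumption $A\equiv 0$, the formula at $(i,j)=(x_0,y_0)$ reduces --- after killing all $(r',s')<(r_0,s_0)$ contributions by the inductive hypothesis --- to $L^{r_0s_0}_{x_0,y_0}=0$, and letting $(x_0,y_0)$ range over $X_{r_0s_0}$ delivers $L^{r_0s_0}=0$. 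The main subtlety I foresee is that this monotonicity depends on reading the embedding $\mathbb{Z}_{p^{r-1}}\hookrightarrow\mathbb{Z}_{p^r}$ in $X_{rs}$ as the set-theoretic initial segment rather than as the subgroup $p\mathbb{Z}_{p^r}$; once that is pinned down, the rest is a mechanical M\"obius-style inversion extending the $a\leq 2$ computations already recorded in Propositions 3.2 and 3.4.
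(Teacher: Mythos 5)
Your three-step strategy --- target, dimension count, injectivity --- is exactly the paper's, and all three steps are sound. That said, two places in your write-up are in fact \emph{more} careful than the paper's own exposition, and are worth flagging. In step (i), the paper passes to the quantity $X_{il}=\frac{1}{p^a}\sum_k w^{lk}L^{rs}_{p^{a-r}i,p^{a-s}k}$ and asserts that the geometric sum annihilates every $l\neq 0$; in fact the inner sum over the $\ell$-variable only vanishes when $p^{a-s}\nmid l$, and in the surviving case $p^{a-s}\mid(i-j)$ one really does need your observation that $r+s\leq a$ forces $p^r\mid(i-j)$, hence $p^{a-r}i\equiv p^{a-r}j\ (\mathrm{mod}\ p^a)$, so the two $L$-terms coincide. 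Your version correctly supplies this case. Likewise, the ambiguity you raise --- whether $\mathbb Z_{p^{r-1}}\subset\mathbb Z_{p^r}$ in the definition of $X_{rs}$ is the subgroup $p\mathbb Z_{p^r}$ or the initial segment $\{0,\dots,p^{r-1}-1\}$ --- is genuine, and your resolution (initial segment) is the only one under which the map $L\mapsto A$ is even surjective: already at $p=2$, $a=2$ the subgroup reading forces the spurious relation $A_{00}=A_{02}$ on the first row, which fails for trivial deformations $A_{ij}=a_i+b_j$. Finally, your support-monotonicity lemma and the induction over $(r_0,s_0)$ in the product order is a clean systematization of what the paper does by ad hoc peeling in the $a=2$ case (followed by ``this method works for any $a$''); the two are the same idea, with yours stated in a form that visibly scales.
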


\begin{proof}
Observe first that at $a=1,2$ this follows from Proposition 3.2 and Proposition 3.4 respectively. Observe also that in the general case, the number of $L$ variables is:
\begin{eqnarray*}
d
&=&\sum_{r+s\leq a}|\mathbb Z_{p^r}-\mathbb Z_{p^{r-1}}|\cdot|\mathbb Z_{p^s}-\mathbb Z_{p^{s-1}}|=\sum_{r\leq a}p^{a-r}|\mathbb Z_{p^r}-\mathbb Z_{p^{r-1}}|\\
&=&p^a+\sum_{r=1}^ap^{a-r}(p^r-p^{r-1})=p^a+a(p-1)p^{a-1}=(p+ap-a)p^{a-1}
\end{eqnarray*}

Thus the number of $L$ variables equals the defect $d(F)$, so it is indeed the good one. As for the proof now, in the general case, this is quite similar to the one at $a=1,2$.

More precisely, consider the map $L\to A$. This map is linear, and in view of the above calculation, it is enough to prove that this map is injective, and has the correct target.

For the injectivity part, recall that at $a=2$ the formula in the statement reads:
$$A_{ij}=L^{00}_{00}+L^{01}_{0,pj}+L^{10}_{pi,0}+L^{02}_{0j}+L^{20}_{i0}+L^{11}_{pi,pj}$$

Now assume $A=0$. Then with $i=j=0$ we get $L^{00}_{00}=0$. Using this, with $i=0$ and $pj=0,j\neq 0$ we get $L^{00}_{00}+L^{02}_{0j}=0$, and so $L^{02}_{0j}=0$. So, with $i=0$ and $pj\neq 0$ we therefore obtain $L^{00}_{00}+L^{02}_{0j}+L^{01}_{0,pj}=0$, and so $L^{01}_{0,pj}=0$. Now the same method gives as well succesively $L^{20}_{i0}=0$ and $L^{10}_{pi,0}=0$, so we are left with $A_{ij}=L^{11}_{pi,pj}$, so we must have $L^{11}_{pi,pj}=0$ as well, and we are done. This method works of course for any $a\in\mathbb N$.

Regarding now the ``target'' part, we must prove $A\in\widetilde{T}_FC_N$. The equations are:
$$\sum_kw^{(i-j)k}\left(\sum_{r+s\leq a}L^{rs}_{p^{a-r}i,p^{a-s}k}-L^{rs}_{p^{a-r}j,p^{a-s}k}\right)=0$$

So, for any indices $i,j$ and any $r+s\leq a$, we must prove that we have:
$$\sum_kw^{(i-j)k}\left(L^{rs}_{p^{a-r}i,p^{a-s}k}-L^{rs}_{p^{a-r}j,p^{a-s}k}\right)=0$$

In order to do this, consider the following quantity:
$$X_{il}=\frac{1}{p^a}\sum_kw^{lk}L^{rs}_{p^{a-r}i,p^{a-s}k}$$

We must prove $X_{i,i-j}=X_{j,i-j}$. But, with $k=m+p^sn$, we have:
$$X_{il}
=\frac{1}{p^a}\sum_nw^{lp^sn}\sum_mw^{lm}L^{rs}_{p^{a-r}i,p^{a-s}m}
=\delta_{l0}\sum_mw^{lm}L^{rs}_{p^{a-r}i,p^{a-s}m}$$

Thus we have $l\neq 0\implies X_{il}=0$, and so $X_{i,i-j}=X_{j,i-j}$ and we are done.
\end{proof}

\begin{proposition}
For an isotypic Fourier matrix, $H=F_N$ with $N=p^a$, we have
$$T_H^\circ C_N=T_HC_N=\widetilde{T}_HC_N=\left\{A\in M_N(\mathbb R)\Big|A_{ij}=\sum_{r+s\leq a}L^{rs}_{p^{a-r}i,p^{a-s}j}\right\}$$
where the $L$ variables are free, and form dephased matrices $L^{rs}$. 
\end{proposition}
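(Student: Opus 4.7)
The description of $\widetilde{T}_HC_N$ is already furnished by Lemma 3.5. Since the inclusions $T_H^\circ C_N\subset T_HC_N\subset\widetilde{T}_HC_N$ are automatic, it would suffice to prove the reverse inclusion $\widetilde{T}_HC_N\subset T_H^\circ C_N$: every $A$ of the form in Lemma 3.5 actually comes from an affine deformation. By Theorem 1.12(3) this amounts to verifying
$$S:=\sum_{k=0}^{N-1} w^{(i-j)k}\, q^{A_{ik}-A_{jk}}=0\quad\text{for all } i\neq j\text{ and }q\in\mathbb T,$$
where $w=e^{2\pi i/N}$ and $N=p^a$.

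The plan is to exploit the fact that $L^{rs}_{p^{a-r}i,p^{a-s}k}$ depends on $i$ only through $i\bmod p^r$ and on $k$ only through $k\bmod p^s$. In particular, all $r=0$ terms cancel identically in $A_{ik}-A_{jk}$, while all $s=0$ terms are constant in $k$ and can be pulled out of $S$ as an overall $q$-scalar. So it suffices to show $\sum_k w^{(i-j)k}q^{\Delta(k)}=0$, where
$$\Delta(k)=\sum_{\substack{r,s\geq 1\\ r+s\leq a}}\bigl(L^{rs}_{p^{a-r}i,p^{a-s}k}-L^{rs}_{p^{a-r}j,p^{a-s}k}\bigr).$$

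To evaluate this, I would expand $k$ in base $p$ as $k=k_0+k_1p+\cdots+k_{a-1}p^{a-1}$, so that the Fourier factor becomes $\prod_\ell \omega_\ell^{k_\ell}$ with $\omega_\ell=e^{2\pi i(i-j)/p^{a-\ell}}$. Let $t$ denote the $p$-adic valuation of $i-j$, so $t\in\{0,1,\dots,a-1\}$ since $i\neq j$. The strategy is to carry out the sum digit by digit, from $k_{a-1}$ down to $k_0$, and observe: for $\ell\geq a-t$, one has $\omega_\ell=1$ and $\Delta(k)$ is independent of $k_\ell$, so the $k_\ell$-sum produces a factor $p$; and for $\ell=a-t-1$, $\omega_\ell$ is a primitive $p$-th root of unity while $\Delta(k)$ remains independent of $k_\ell$, so $\sum_{k_\ell=0}^{p-1}\omega_\ell^{k_\ell}=0$ and hence $S=0$.

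The crucial combinatorial input, which I expect to be the main obstacle, is the independence of $\Delta(k)$ from $k_\ell$ whenever $\ell\geq a-t-1$. The argument would go: for $\Delta$ to depend on $k_\ell$ there must be a summand with $s\geq\ell+1\geq a-t$, but the constraint $r+s\leq a$ then forces $r\leq t$; and since $p^t\mid(i-j)$, for any such $r$ one has $i\equiv j\pmod{p^r}$, so the $(r,s)$ summand vanishes identically. Once this compatibility between the $p^r$-divisibility cancellations in the $L^{rs}$-blocks and the base-$p$ digits of $k$ is pinned down, the proof of Proposition 3.6 reduces to a direct computation, and the explicit description of the common value of the three cones is inherited verbatim from Lemma 3.5.
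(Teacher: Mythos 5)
Your proof is correct, and the high-level strategy is the same as the paper's: reduce to the inclusion $\widetilde{T}_HC_N\subset T_H^\circ C_N$ and verify the affine-deformation criterion of Theorem~1.12(3) by a Fourier sum, exploiting the fact that $L^{rs}_{p^{a-r}i,p^{a-s}k}$ sees only $i\bmod p^r$ and $k\bmod p^s$. Where you diverge is in the organization of the Fourier sum: the paper substitutes $k=m+p^sn$ (with an ambiguous fixed $s$, since $s$ also ranges inside the product) and then invokes an orthogonality step that, as written, reads $\sum_n w^{(i-j)p^sn}=\delta_{ij}p^a$; this identity is not correct as stated, because that sum is $p^{a-s}$ whenever $p^{a-s}\mid(i-j)$ and $0$ otherwise, not a Kronecker delta in $i,j$. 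Your base-$p$ digit expansion $k=k_0+k_1p+\cdots+k_{a-1}p^{a-1}$, together with the explicit introduction of the valuation $t=v_p(i-j)$, makes precisely the right case distinction: the constraint $r+s\le a$ forces any block $L^{rs}$ that could see the digit $k_{a-t-1}$ (i.e.\ $s\ge a-t$) to have $r\le t$, whence $p^r\mid(i-j)$ and that block cancels in the difference, leaving a free primitive $p$-th root factor whose digit-sum over $k_{a-t-1}$ vanishes. In short, the two proofs follow the same route, but your finer decomposition both fills in the missing valuation case analysis and avoids the ambiguity in the paper's choice of $s$, so it is the more rigorous rendering of the intended argument.
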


\begin{proof}
In view of Lemma 3.5, we just have to show that the defect of $F_N$ is exhausted by affine deformations. With $k=m+p^sn$, as in the proof of Lemma 3.5, we have:
\begin{eqnarray*}
\sum_kH_{ik}\bar{H}_{jk}q^{A_{ik}-A_{jk}}
&=&\sum_kw^{(i-j)k}\prod_{r+s\leq a}q^{L^{rs}_{p^{a-r}i,p^{a-s}k}-L^{rs}_{p^{a-r}j,p^{a-s}k}}\\
&=&\sum_nw^{(i-j)p^sn}\sum_mw^{(i-j)m}\prod_{r+s\leq a}q^{L^{rs}_{p^{a-r}i,p^{a-s}m}-L^{rs}_{p^{a-r}j,p^{a-s}m}}\\
&=&\delta_{ij}p^a\sum_mw^{(i-j)m}\prod_{r+s\leq a}q^{L^{rs}_{p^{a-r}i,p^{a-s}m}-L^{rs}_{p^{a-r}j,p^{a-s}m}}
\end{eqnarray*}

Now since this quantity vanishes for $i\neq j$, this gives the result.
\end{proof}

Observe that this result shows that Conjecture 2.9 holds for the isotypic Fourier matrices. We will see in what follows that the same happens for any Fourier matrix.

In order now to discuss the general case, $H=F_N$, we will need:

\begin{lemma}
If $G=H\times K$ is such that $(|H|,|K|)=1$, the canonical inclusion
$$\widetilde{T}_{F_H}C_{|H|}\otimes\widetilde{T}_{F_K}C_{|K|}\subset\widetilde{T}_{F_G}C_{|G|}$$
constructed in Proposition 2.4 above is an isomorphism.
\end{lemma}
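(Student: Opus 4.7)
The plan is to reduce the claim to a pure dimension count. By Proposition 2.4 we already have a canonical inclusion of real vector spaces $\widetilde{T}_{F_H}C_{|H|}\otimes\widetilde{T}_{F_K}C_{|K|}\hookrightarrow\widetilde{T}_{F_G}C_{|G|}$, so it suffices to (a) check that this map is injective out of the abstract tensor product, and (b) match the two real dimensions using the defect formula of Proposition 2.3. If both hold, then an injective linear map between finite-dimensional real vector spaces of equal dimension is automatically an isomorphism, and we are done.

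For (a), injectivity is a standard Kronecker-product fact: if $\{B_\alpha\}$ is an $\mathbb{R}$-basis of $\widetilde{T}_{F_H}C_{|H|}$ and $\{C_\beta\}$ of $\widetilde{T}_{F_K}C_{|K|}$, then the Kronecker products $B_\alpha\otimes C_\beta$ are $\mathbb{R}$-linearly independent in $M_{|H|\cdot|K|}(\mathbb{R})$, since the full Kronecker products of bases of $M_{|H|}(\mathbb{R})$ and $M_{|K|}(\mathbb{R})$ form a basis of $M_{|H|\cdot|K|}(\mathbb{R})$. Hence the left-hand side has real dimension $d(F_H)\cdot d(F_K)$.

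For (b), I would verify the multiplicativity $d(F_G)=d(F_H)\cdot d(F_K)$ under coprimality using Proposition 2.3(2), which gives $d(F_G)=\sum_{g\in G}[G:\langle g\rangle]$. Writing $g=(h,k)\in H\times K$, since $\mathrm{ord}(h)$ divides $|H|$ and $\mathrm{ord}(k)$ divides $|K|$, the assumption $(|H|,|K|)=1$ forces $(\mathrm{ord}(h),\mathrm{ord}(k))=1$, so $|\langle(h,k)\rangle|=\mathrm{ord}(h)\cdot\mathrm{ord}(k)$ and therefore $[G:\langle(h,k)\rangle]=[H:\langle h\rangle]\cdot[K:\langle k\rangle]$. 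The double sum then factors as $\bigl(\sum_h[H:\langle h\rangle]\bigr)\bigl(\sum_k[K:\langle k\rangle]\bigr)=d(F_H)\cdot d(F_K)$, matching the dimension computed in (a).

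There is no real obstacle here; the proof is a clean combination of the explicit defect formula from Proposition 2.3 with the elementary group-theoretic identity for the order of $(h,k)$ in a coprime direct product, plus the standard Kronecker-basis fact from linear algebra. The only point requiring a moment of care is distinguishing the abstract tensor product of the two tangent spaces from its Kronecker-product realization inside $M_{|G|}(\mathbb{R})$, and this is precisely what injectivity in step (a) handles.
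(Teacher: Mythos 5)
Your proof is correct and follows essentially the same route as the paper's: a dimension count via Proposition 2.3(2), using coprimality to factor $\mathrm{ord}(h,k)=\mathrm{ord}(h)\,\mathrm{ord}(k)$. The only difference is that you spell out the injectivity of the Kronecker-product map, which the paper leaves implicit.
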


\begin{proof}
We have $F_G=F_{H\times K}$, and the defect of this matrix is given by:
$$d(F_{H\times K})=\sum_{(h,k)\in H\times K}\frac{|H\times K|}{ord(h,k)}=\sum_{(h,k)\in H\times K}\frac{|H\times K|}{ord(h)ord(k)}=d(F_H)d(F_K)$$

Thus the inclusion in the statement must be indeed an isomorphism.
\end{proof}

With this lemma in hand, the idea now will be simply to ``glue'' the various isotypic formulae coming from Proposition 3.6. Indeed, let us recall from there that in the isotypic case, $N=p^a$, the parameter set for the enveloping tangent space is:
$$X(p^a)=\bigsqcup_{r+s\leq a}(\mathbb Z_{p^r}-\mathbb Z_{p^{r-1}})\times (\mathbb Z_{p^s}-\mathbb Z_{p^{s-1}})$$

Now since the defect is multiplicative over isotypic components, the parameter set in the general case, $N=p_1^{a_1}\ldots p_k^{a_k}$, will be simply given by:
$$X(p_1^{a_1}\ldots p_k^{a_k})=X(p_1^{a_1})\times\ldots\times X(p_k^{a_k})$$

One can obtain from this an even simpler description of the parameter set, just by expanding the product, and gluing the group components. Indeed:

\begin{definition}
Given a finite abelian group $G=\mathbb Z_{p_1^{r_1}}\times\ldots\times\mathbb Z_{p_k^{r_k}}$ we set:
$$G^\circ=(\mathbb Z_{p_1^{r_1}}-\mathbb Z_{p_1^{r_1-1}})\times\ldots\times(\mathbb Z_{p_k^{r_k}}-\mathbb Z_{p_k^{r_k-1}})$$
A matrix $L\in M_{G\times H}(\mathbb R)$ will be called dephased if $L_{ij}=0$ for any $(i,j)\not\in G^\circ\times H^\circ$.
\end{definition}

Observe now that, with the above notation $G^\circ$, the parameter set discussed above is given by the following simple formula:
$$X(N)=\bigsqcup_{G\times H\subset\mathbb Z_N}G^\circ\times H^\circ$$

In addition, we can see that the collection of dephased matrices $L\in M_{G\times H}(\mathbb R)$ , over all possible configurations $G\times H\subset\mathbb Z_N$, takes its parameters precisely in $X(N)$.

In order to formulate our main result, we will need one more definition:

\begin{definition}
Given $N=p_1^{a_1}\ldots p_k^{a_k}$ and a subgroup $G\subset \mathbb Z_N$, we set 
$$\varphi_G(i_1,\ldots,i_k)=(p_1^{a_1-r_1}i_1,\ldots p_k^{a_k-r_k}i_k)$$
where the exponents $r_i\leq a_i$ are given by $G=\mathbb Z_{p_1^{r_1}}\times\ldots\times\mathbb Z_{p_k^{r_k}}$.
\end{definition}

Observe that in the case $k=1$ this function is precisely the one appearing in Proposition 3.6 above. In fact, we have the following generalization of Proposition 3.6:

\begin{theorem}
For $H=F_N$ the vectors $A\in\widetilde{T}_HC_N$ appear as plain sums of type
$$A_{ij}=\sum_{G\times H\subset\mathbb Z_N}L^{GH}_{\varphi_G(i)\varphi_H(j)}$$
where the $L$ variables form dephased matrices $L^{GH}\in M_{G\times H}(\mathbb R)$. 
\end{theorem}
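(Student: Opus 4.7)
The plan is to combine the isotypic computation of Proposition 3.6 with the tensor factorization supplied by Lemma 3.7, and then to repackage the resulting parameters in the compact $(G,H)$-notation of Definitions 3.8 and 3.10. The main work is conceptual organization: no new analytic input is needed.

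First I would write $N=p_1^{a_1}\ldots p_k^{a_k}$, identify $\mathbb Z_N=\mathbb Z_{p_1^{a_1}}\times\ldots\times\mathbb Z_{p_k^{a_k}}$ via CRT, and recall from Proposition 1.2 that $F_N=F_{p_1^{a_1}}\otimes\ldots\otimes F_{p_k^{a_k}}$. Iterating Lemma 3.7 across these pairwise coprime factors yields
$$\widetilde{T}_{F_N}C_N=\bigotimes_{l=1}^k\widetilde{T}_{F_{p_l^{a_l}}}C_{p_l^{a_l}}.$$
Proposition 3.6 describes each factor as an internal direct sum $\bigoplus_{r_l+s_l\leq a_l}V^{r_ls_l}$, where $V^{r_ls_l}$ is linearly parametrized by the dephased matrices $L^{r_ls_l}$ over $\mathbb Z_{p_l^{r_l}}\times\mathbb Z_{p_l^{s_l}}$ via $A^{(l)}_{ij}=L^{r_ls_l}_{p_l^{a_l-r_l}i,\,p_l^{a_l-s_l}j}$.

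Next I would distribute the tensor product over the direct sums to obtain $\widetilde{T}_{F_N}C_N=\bigoplus V^{r_1s_1}\otimes\cdots\otimes V^{r_ks_k}$, the sum running over $k$-tuples $((r_l,s_l))$ with each $r_l+s_l\leq a_l$. Each such tuple singles out a pair of subgroups $G=\prod_l\mathbb Z_{p_l^{r_l}}$ and $H=\prod_l\mathbb Z_{p_l^{s_l}}$ of $\mathbb Z_N$; the constraint $r_l+s_l\leq a_l$ on every factor is precisely what the shorthand ``$G\times H\subset\mathbb Z_N$'' in the statement packages. The key identification is that $V^{r_1s_1}\otimes\cdots\otimes V^{r_ks_k}$ is isomorphic as a real vector space to the space of dephased matrices $L^{GH}\in M_{G\times H}(\mathbb R)$ in the sense of Definition 3.8, because the tensor product of matrices supported in $X_{r_ls_l}$ sweeps out all matrices on $G\times H$ supported in $G^\circ\times H^\circ=\prod_lX_{r_ls_l}$, and the dimensions match.

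Finally I would verify the entrywise formula. For a pure tensor $A^{(1)}\otimes\cdots\otimes A^{(k)}$, evaluation at $i=(i_1,\ldots,i_k)$, $j=(j_1,\ldots,j_k)$ gives
$$\prod_lL^{r_ls_l}_{p_l^{a_l-r_l}i_l,\,p_l^{a_l-s_l}j_l}=L^{GH}_{\varphi_G(i),\,\varphi_H(j)},$$
where on the right the tensor product of the isotypic matrices is regarded as a single matrix $L^{GH}$ on $G\times H$, and $\varphi_G,\varphi_H$ are the index maps of Definition 3.10. By linearity this identification extends to all of $V^{r_1s_1}\otimes\cdots\otimes V^{r_ks_k}$, and summing over the admissible pairs $(G,H)$ reproduces the formula in the statement. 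The main obstacle is purely bookkeeping: tracking the CRT coordinates and checking that the isotypic support conditions assemble into the single condition $G^\circ\times H^\circ$; a sanity check via dimensions, comparing $\sum_{G,H}|G^\circ\times H^\circ|$ with the defect $d(F_N)$ from Proposition 2.3, shows that no parameters are lost.
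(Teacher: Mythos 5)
Your proposal is correct and follows essentially the same route as the paper: iterate Lemma 3.7 over the coprime isotypic factors of $N$, plug in Proposition 3.6 for each factor, distribute the tensor product over the direct sums, and identify each summand indexed by a tuple $(r_l,s_l)$ with the space of dephased matrices over the corresponding pair of subgroups $G\times H\subset\mathbb Z_N$. You are somewhat more explicit than the paper about why the rank-one tensors $L^{r_1s_1}\otimes\cdots\otimes L^{r_ks_k}$ span the full space of dephased $L^{GH}$ (and about the dimension count against $d(F_N)$), but the underlying argument is the same.
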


\begin{proof}
According to the above discussion, we just have to glue the various isotypic formulae coming from Proposition 3.6, by using Lemma 3.7. The gluing formula reads:
\begin{eqnarray*}
A_{i_1\ldots i_k,j_1\ldots j_k}
&=&A_{i_1j_1}\ldots A_{i_kj_k}\\
&=&\left(\sum_{r_1+s_1\leq a_1}L^{r_1s_1p_1}_{p_1^{a_1-r_1}i_1,p_1^{a_1-s_1}j_1}\ldots\sum_{r_k+s_k\leq a_k}L^{r_ks_kp_k}_{p_k^{a_k-r_k}i_k,p_k^{a_k-s_k}j_k}\right)\\
&=&\sum_{r_1+s_1\leq a_1}\ldots \sum_{r_k+s_k\leq a_k}L^{r_1s_1p_1}_{p_1^{a_1-r_1}i_1,p_1^{a_1-s_1}j_1}\ldots L^{r_ks_kp_k}_{p_k^{a_k-r_k}i_k,p_k^{a_k-s_k}j_k}
\end{eqnarray*}

Now, let us introduce the following variables:
$$L^{r_1\ldots r_k,s_1\ldots s_k}_{i_1\ldots i_k,j_1\ldots j_k}=L^{r_1s_1}_{i_1j_1}\ldots L^{r_ks_k}_{i_kj_k}$$

In terms of these new variables, the gluing formula reads:
$$A_{i_1\ldots i_k,j_1\ldots j_k}=\sum_{r_1+s_1\leq a_1}\ldots \sum_{r_k+s_k\leq a_k}L^{r_1\ldots r_k,s_1\ldots s_k}_{p_1^{a_1-r_1}i_1,\ldots p_k^{a_k-r_k}i_k,p_1^{a_1-r_1}j_1\ldots p_k^{a_k-r_k}j_k}$$

Together with the fact that the new $L$ variables form dephased matrices, in the sense of Definition 3.8 above, this gives the result.
\end{proof}

As an example, at $N=6$ the choices for the group $G\times H$ appearing in Theorem 3.10 are $\mathbb Z_1\times\mathbb Z_1,\mathbb Z_1\times\mathbb Z_2,\mathbb Z_1\times\mathbb Z_3,\mathbb Z_1\times\mathbb Z_6,\mathbb Z_2\times\mathbb Z_1,\mathbb Z_2\times\mathbb Z_3,\mathbb Z_3\times\mathbb Z_1,\mathbb Z_3\times\mathbb Z_2,\mathbb Z_6\times\mathbb Z_1$. According now to the dephasing conventions in Definition 3.8, which include our usual convention $\mathbb Z_{p^{-1}}=\emptyset$, these choices will produce $1+1+2+2+1+2+2+2+2=15$ variables, and we recover in this way the defect formula $d(F_6)=15$ coming from Proposition 2.3 (4).

As a first consequence, we have the following result, conjectured in \cite{ban}:

\begin{corollary}
The rationality conjecture holds for the Fourier matrices.
\end{corollary}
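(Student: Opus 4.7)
The plan is to deduce the corollary immediately from Theorem~3.10, which gives an explicit parametrization of $\widetilde{T}_{F_N}C_N$ by a direct sum of dephased real matrix spaces, with all coefficients in $\{0,1\}$. The whole argument hinges on the fact that this parametrization is defined over $\mathbb{Z}$ and is a linear bijection, so it restricts to a bijection between the rational points on each side.

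First I would assemble the parameter space $V = \bigoplus_{G \times H \subset \mathbb{Z}_N} M^{\circ}_{G \times H}(\mathbb{R})$, where $M^{\circ}$ denotes the dephased matrices in the sense of Definition~3.8, and consider the linear map $\Phi : V \to M_N(\mathbb{R})$ defined by the formula $A_{ij} = \sum_{G \times H} L^{GH}_{\varphi_G(i)\varphi_H(j)}$. Theorem~3.10 says the image of $\Phi$ is contained in (in fact equals) $\widetilde{T}_{F_N}C_N$. Counting parameters as in the discussion after Definition~3.8, and comparing with the defect formula of Proposition~2.3, gives $\dim_\mathbb{R} V = d(F_N)$, so $\Phi$ is in fact a linear isomorphism $V \cong \widetilde{T}_{F_N}C_N$.

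Next I would observe that $\Phi$ and $\Phi^{-1}$ are both defined over $\mathbb{Z}$. For $\Phi$ this is immediate from the formula. For the inverse, one runs the same ``peeling off'' induction used in the injectivity argument of Lemma~3.5: read off $L^{\mathbb{Z}_1,\mathbb{Z}_1}_{00} = A_{00}$, then recover the $L^{\mathbb{Z}_1, H}$ and $L^{G, \mathbb{Z}_1}$ components by restricting to the appropriate rows and columns of $A$ and subtracting, and inductively peel off the higher components $L^{GH}$ in order of increasing $|G||H|$. Each step expresses an entry of some $L^{GH}$ as an explicit $\mathbb{Z}$-linear combination of entries of $A$, so $\Phi^{-1}$ has integer coefficients as well.

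Consequently $\Phi$ restricts to a $\mathbb{Q}$-linear isomorphism $V_\mathbb{Q} \cong \widetilde{T}_{F_N}C_N \cap M_N(\mathbb{Q})$, where $V_\mathbb{Q} = \bigoplus_{G \times H} M^{\circ}_{G \times H}(\mathbb{Q})$. Therefore
$$d_\mathbb{Q}(F_N) = \dim_\mathbb{Q} V_\mathbb{Q} = \dim_\mathbb{R} V = d(F_N),$$
which is the rationality conjecture for $H = F_N$. There is no real obstacle once Theorem~3.10 is in hand; the only point to verify carefully is that the inversion of $\Phi$ is rational, which is built into the inductive peel-off argument already present in the proof of Lemma~3.5.
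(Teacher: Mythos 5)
Your argument is correct and follows the same route as the paper, which simply observes that the parametrization of Theorem~3.10 has integer (in fact $0/1$) coefficients, so the rational and real dimensions of $\widetilde{T}_{F_N}C_N$ must coincide. One minor remark: the explicit inversion of $\Phi$ over $\mathbb{Q}$ is not actually needed, since $\Phi$ being injective with integer coefficients already gives $d_\mathbb{Q}(F_N) \geq \dim_\mathbb{Q} V_\mathbb{Q} = d(F_N)$, and the reverse inequality $d_\mathbb{Q}(F_N)\leq d(F_N)$ is automatic.
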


\begin{proof}
Indeed, the formula in Theorem 3.10 shows that for $H=F_N$ the rational defect, as constructed in Definition 2.8, counts the same variables as the usual defect.
\end{proof}

A few comments now regarding the generalized Fourier matrix case, $F=F_G$. In the isotypic case $G=\mathbb Z_{p^{a_1}}\times\ldots\times\mathbb Z_{p^{a_k}}$, according to Proposition 2.3 (2), we have:
\begin{eqnarray*}
d(F_G)
&=&\sum_{g\in G}\frac{N}{ord(g)}=\sum_{(g_1,\ldots,g_k)\in G}\frac{N}{\max(ord(g_1),\ldots,ord(g_k))}\\
&=&\sum_{r_1=0}^{a_1}\ldots\sum_{r_k=0}^{a_k}|\mathbb Z_{p^{r_1}}-\mathbb Z_{p^{r_1-1}}|\ldots |\mathbb Z_{p^{r_k}}-\mathbb Z_{p^{r_k-1}}|\cdot\frac{p^{a_1+\ldots+a_k}}{p^{\max(r_1,\ldots,r_k)}}
\end{eqnarray*}

The combinatorics here is obviously much more complicated. In fact, if we assume $a_1\leq a_2\leq\ldots\leq a_k$ then the defect formula from \cite{ban}, that we believe optimal, is:
$$d(F_G)=N\left(1+\sum_{r=1}^rp^{(k-r)a_{r-1}+(a_1+\ldots+a_{r-1})-1}(p^{k-r+1}-1)[a_r-a_{r-1}]_{p^{k-r}}\right)$$

Here $a_0=0$, and we use the standard notation $[a]_q=1+q+q^2+\ldots+q^{a-1}$.

As a conclusion, the following problem is open:

\begin{problem}
How does $\widetilde{T}_FC_N$ decompose, for a generalized Fourier matrix?
\end{problem}

Regarding now the affine tangent cone $T_F^\circ C_N$, we believe that, at least in the $G=\mathbb Z_N$ case, an iterated application of Theorem 2.7 should give the answer, but we don't have a proof for this fact. As for the tangent cone $T_FC_N$, very little is known here, see \cite{bbe}. We believe that the variables introduced in Theorem 3.10 can be of help in investigating the tangent cone, but so far we have no concrete results in this direction.

\section{Probabilistic aspects}

We have seen in the previous sections that for certain complex Hadamard matrices the defect is the number of 1 entries, and that the geometry basically comes from this. 

In general, the situation is much more complicated than that. One problem with the formula $d(H)=|1\in H|$ comes from the fact that the defect is insensitive to the equivalence relation in Definition 1.3, while the number of 1 entries is highly sensitive to it. So, unless $H$ is given to us in some natural, standard form, as is the case for instance with the Fourier matrices, we have to take into account all the quantities of type $|1\in\widetilde{H}|$, with $\widetilde{H}$ ranging over matrices which are equivalent to $H$.

\begin{conjecture}
We have the estimate $d(H)\leq\max|1\in\widetilde{H}|$, with the max ranging over all matrices $\widetilde{H}$ which are equivalent to $H$.
\end{conjecture}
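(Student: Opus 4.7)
Since the defect is equivalence invariant (Proposition~2.3 (1)), we may replace $H$ by a matrix $\widetilde H\sim H$ attaining $\max_{K\sim H}|1\in K|$: such a maximiser exists because the equivalence group $(K_N\times K_N)/\mathbb T$ is compact and the counting function $|1\in\cdot|$ is upper semicontinuous on $M_N(\mathbb T)$ (any $(i,j)\notin S$ satisfies $K_{ij}\neq 1$ in a neighbourhood of $K$, so the support can only jump up at the limit). After renaming $\widetilde H$ back to $H$ and setting $S=\{(i,j):H_{ij}=1\}$, the statement to prove becomes $d(H)\leq|S|$.

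The naive strategy of showing that the restriction map $A\mapsto(A_{ij})_{(i,j)\in S}$ is injective on $\widetilde T_HC_N$ does \emph{not} work, already in the Fourier case: at $H=F_4$ the parameter $L^{11}_{1,1}$ of Proposition~3.4 affects only the entries $(i,j)$ with $\bar i=\bar j=1$, none of which lie in $S$, so restriction to $S$ has a nontrivial kernel. My plan is instead to build a more subtle linear injection $\iota:\widetilde T_HC_N\hookrightarrow\mathbb R^S$ that remembers the $L$-variable structure. For Fourier matrices Theorem~3.10 provides the template: each parameter $L^{GH}_{ab}$ is naturally matched with a designated $1$-entry of $F_N$ via the $\mathbb Z_N$-subgroup combinatorics, and the bijectivity of this matching is the content of the defect formula $d(F_N)=|1\in F_N|$. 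The conjecture asserts that an analogous indexing, possibly non-canonical and possibly merely injective rather than bijective, exists for an arbitrary $H$.

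The concrete attack combines two ingredients: (i) the defining linear system $\sum_kH_{ik}\bar H_{jk}(A_{ik}-A_{jk})=0$ of $\widetilde T_HC_N$ from Theorem~1.12 (1); and (ii) first-order stationarity of $|1\in\cdot|$ at the maximiser $H$, which states that no $\xi,\eta\in\mathbb R^N$ can make the number of pairs $(i,j)$ satisfying $e^{it\xi_i}H_{ij}e^{it\eta_j}=1$ exceed $|S|$ to first order in $t$. Ingredient (ii) forces the phases of $H$ off of $S$ to avoid coincidences with the $1$-entries of trivial deformations of $H$, and one hopes to convert this absence of coincidences into linear independence of the columns of the coefficient matrix of (i) indexed by $k\notin S$, modulo those indexed by $k\in S$. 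Given such an independence statement, a standard rank computation would yield $d(H)\leq|S|$.

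The main obstacle is step (ii): in the absence of subgroup structure I do not see a clean way to extract exploitable rigidity from first-order stationarity of $|1\in\cdot|$. A sensible intermediate target is the Butson case, where Proposition~1.9 (4) replaces the real-analytic condition by a combinatorial one (vanishing partial row-sums), and where the maximality hypothesis has a concrete interpretation as extremality of a finite support pattern. Even there, the analysis seems to split along the regularity/non-regularity divide of Conjecture~1.8, so a fully unconditional proof will likely require a genuinely new algebraic idea relating the rank of the defining system to the global phase distribution of $H$, rather than the local restriction/stationarity approach sketched here; an attractive alternative would be to iterate the Di\c t\u a decomposition of Theorem~2.7, using it to reduce the estimate for $H$ to the estimate for smaller factors, though realising this reduction in the generic case is itself nontrivial.
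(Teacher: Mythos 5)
The target statement is Conjecture~4.1, which the paper states as an \emph{open} conjecture and does not prove; there is therefore no proof in the paper to compare your attempt against. You correctly identify it as open, and indeed you do not claim a proof. Your preliminary observations are sound: the equivalence group $(K_N\times K_N)/\mathbb T$ is compact and $|1\in\cdot|$ is upper semicontinuous on $M_N(\mathbb T)$, so a maximiser exists and one may normalise $H$ to it; and your $F_4$ counterexample to the naive restriction map is correct --- the $L^{11}_{1,1}$ parameter of Proposition~3.4 only touches entries $(i,j)$ with $i,j$ both odd, and for $F_4$ the condition $ij\equiv 0\ (\mathrm{mod}\ 4)$ fails there, so restriction to the $1$-entry set $S$ does have nontrivial kernel. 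This is a useful sanity check that the conjecture, if true, must be proved by something subtler than entrywise restriction, which is consistent with the paper's own treatment: the Fourier case (Theorem~3.10, Corollary~3.11) requires the full dephased-$L$-variable parametrisation, and the paper then immediately records the general statement as conjectural (Conjecture~4.1, refined in Conjectures~4.2, 4.4, 4.5) and poses Problem~2.11 about when $d(H)=|1\in H|$ holds. Your ingredient~(ii), first-order stationarity of $|1\in\cdot|$ at the maximiser, is a genuinely new angle not present in the paper, but as you yourself note you do not have a way to convert it into a rank bound on the linear system of Theorem~1.12~(1), so there is a real gap; your proposal is best read as a research plan rather than a proof. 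One small caution: the iterated Di\c t\u a reduction via Theorem~2.7 that you mention only produces subcones of $T^\circ_{H\otimes K}C_{NM}$, not a decomposition of $\widetilde T$, so it cannot by itself control the defect of a tensor product, which is why the paper needs the coprimality hypothesis in Lemma~3.7.
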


Let us try now to formulate a finer version of this conjecture, roughly stating that ``$d(H)$ can be recaptured from the statistics of $|1\in\widetilde{H}|$, over the matrices $\widetilde{H}\simeq H$''.

There is an obvious problem with this latter statement, coming from the fact that the number $|1\in\widetilde{H}|$ is generically equal to $0$. In order to overcome this issue, one idea is to restrict attention to the Butson matrices, and to allow in Definition 1.3 only the multiplication on rows and columns by the corresponding roots of unity. 

More precisely, let us denote $\mathbb Z_s$ the group of $s$-th roots of unity, and by $C_N(s)$ the set of $N\times N$ complex Hadamard matrices having entries in $\mathbb Z_s$. We have then:

\begin{conjecture}
For $H\in C_N(s)$, with $s\in\mathbb N$ chosen to be minimal, we have
$$\min_{\widetilde{H}\simeq H}|1\in\widetilde{H}|\leq d(H)\leq\max_{\widetilde{H}\simeq H}|1\in\widetilde{H}|$$
with the min/max ranging over matrices of type $\widetilde{H}_{ij}=a_ib_jH_{ij}$, with $a_i,b_j\in\mathbb Z_s$.
\end{conjecture}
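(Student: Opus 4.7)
The plan is to split the sandwich into its two halves and treat them by very different tools, using throughout the equivalence-invariance of the defect (Proposition 2.2(1)), which guarantees $d(\widetilde H)=d(H)$ for every $\widetilde H\simeq H$ in the $\mathbb Z_s$-row/column orbit of $H$.

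For the upper bound $d(H)\le\max_{\widetilde H\simeq H}|1\in\widetilde H|$, I would pick an $\widetilde H$ achieving the maximum and try to imitate the Fourier argument of Proposition 2.3 and Theorem 3.10: exhibit an explicit family $\{A^{(ij)}\}$ of tangent vectors in $\widetilde T_{\widetilde H}C_N$ indexed by $E=\{(i,j):\widetilde H_{ij}=1\}$, and argue that it spans. The Butson hypothesis enters crucially here: the linear system of Theorem 1.12(1) has coefficients $\widetilde H_{ik}\overline{\widetilde H}_{jk}\in\mathbb Z_s$, so the techniques of Proposition 2.10 on vanishing sums of $s$-th roots of unity let one analyze its rank combinatorially. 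Showing that the $A^{(ij)}$ actually fill $\widetilde T_{\widetilde H}C_N$ is the crux: one wants to argue that any tangent vector is forced by the rank constraints to have its ``support'' on $E$, using that $\widetilde H$ was chosen to \emph{maximize} the number of 1 entries in its orbit (so any competing direction would contradict maximality).

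For the lower bound $\min_{\widetilde H\simeq H}|1\in\widetilde H|\le d(H)$, a direct double-counting over the orbit gives $\mathbb E_{\widetilde H}|1\in\widetilde H|=N^2/s$, and hence $\min\le N^2/s$. Since $d(H)\ge 2N-1$ by Proposition 2.2(2), the conclusion is automatic whenever $N^2/s\le 2N-1$, in particular for $s$ comparable to $N$. The remaining regime, $s$ small, is where the Butson combinatorics becomes delicate; here one would look for a better-than-average $\widetilde H$, exploiting that $\sum_{ij}\widetilde H_{ij}=\langle a,Hb\rangle$ with $a,b\in\mathbb Z_s^N$, so the minimum is controlled by a discrete optimization whose behavior is governed by the fact that $\|H\|=\sqrt N$. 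One would then hope to match this spectral bound against the lower bounds on $d(H)$ obtained, via the isotypic/tensor analysis of Section 3, for Fourier-like Butson matrices.

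The main obstacle lies with the upper bound. Maximizing $|1\in\widetilde H|$ over the orbit is itself a nontrivial combinatorial optimization, related to the Gale--Berlekamp type problems mentioned in the introduction, and a priori has nothing to do with the linear-algebraic rank computation defining $d(H)$. In the Fourier case the two align miraculously because of the subgroup structure $G\times H\subset\mathbb Z_N$ underlying Theorem 3.10 and the multiplicativity of the defect used in Lemma 3.7. For a generic Butson matrix no such structure is available, and a genuinely new ingredient seems required --- perhaps a combinatorial block decomposition of the Gram constraints, or an inductive reduction via the Di\c t\u a deformations of Theorem 2.7 applied to the maximal-$|1|$ representative of each orbit.
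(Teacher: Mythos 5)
This statement is Conjecture 4.2 of the paper, and the paper gives no proof of it: it is stated as an open conjecture, with a proof appearing nowhere in the text. The entirety of Section 4 is explicitly speculative (``Probabilistic aspects''), and the author himself says in the introduction that he ``intends to come back to these questions in some future work.'' So there is nothing to compare your attempt against; and indeed your own write-up concedes in its final paragraph that a ``genuinely new ingredient seems required,'' which is an accurate assessment --- what you have is a research plan, not a proof.

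A few remarks on the plan itself. For the lower bound, the averaging argument $\mathbb E_{\widetilde H}|1\in\widetilde H|=N^2/s$, hence $\min\le N^2/s$, is correct, but combined with $d(H)\ge 2N-1$ it only settles the regime $s\ge N^2/(2N-1)\approx N/2$, and since $s$ is taken minimal this regime is precisely the one that typically does not occur; you flag this gap honestly, but it remains a gap. For the upper bound, the proposed strategy of taking $\widetilde H$ to maximize $|1\in\widetilde H|$ and mimicking Proposition 2.3 / Theorem 3.10 has a structural difficulty you should make explicit: in the Fourier case the identity $d(F)=|1\in F|$ of Proposition 2.3(3) is established at the \emph{dephased} representative $F_N$, and it is not claimed (nor obvious) that $F_N$ maximizes $|1\in\widetilde H|$ over its $\mathbb Z_N$-orbit; so the ``maximal representative'' is not the point at which the paper's tangent-space bookkeeping is known to align with the count of 1-entries. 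Your instinct that the Butson hypothesis and the vanishing-sums analysis of Proposition 2.10 should control the rank of the system from Theorem 1.12(1) is reasonable, but turning that into the inequality $d(H)\le\max|1\in\widetilde H|$ would already constitute a new result beyond what the paper establishes.
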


Observe that in this statement we have dropped the action of the symmetric group on the rows and columns of $H$, because this action leaves invariant both the defect and the number of 1 entries. As for the assumption that $s\in\mathbb N$ has to be minimal, this is of course in order for the lower bound to be non-trivial, because at $s>>0$ this minimum is $0$.

Now, let us go back to the comment following Conjecture 4.1. The quite vague statement formulated there can be now given a precise meaning, by using:

\begin{definition}
Let $H\in C_N(s)$ be a Butson matrix.
\begin{enumerate}
\item We define $\varphi:\mathbb Z_s^N\times\mathbb Z_s^N\to\mathbb N$ by $\varphi(a,b)=\#\{(i,j)|a_ib_jH_{ij}=1\}$.

\item We let $\mu$ be the probability measure on $\mathbb N$ given by $\mu(\{k\})=P(\varphi=k)$.
\end{enumerate}
\end{definition}

In this definition $P$ denotes the probability with respect to the uniform measure on the group $\mathbb Z_s^N\times\mathbb Z_s^N$. In other words, we regard $\varphi$ as a random variable over this group, and we denote by $\mu$ the distribution of this random variable:
$$\mu(\{k\})=\frac{1}{s^{2N}}\#\left\{(a,b)\in \mathbb Z_s^N\times\mathbb Z_s^N\Big|\varphi(a,b)=k\right\}$$

As a first observation, Conjecture 4.2 above can be reformulated as follows:

\begin{conjecture}
For $H\in C_N(s)$, with $s\in\mathbb N$ chosen to be minimal, we have 
$$d(H)\in\overline{supp(\mu)}^{\,conv}$$
where the measure $\mu$ is the one constructed in Definition 4.3 above.
\end{conjecture}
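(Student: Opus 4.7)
The plan is to first note that, since $\mathrm{supp}(\mu)\subset\mathbb{N}$ and is bounded above by $N^2$, the closed convex hull $\overline{\mathrm{supp}(\mu)}^{\,conv}$ inside $\mathbb{R}$ is just the interval $[\min\mathrm{supp}(\mu),\max\mathrm{supp}(\mu)]$. Hence Conjecture 4.4 is literally the two-sided bound of Conjecture 4.2, namely
$$\min_{\widetilde{H}\simeq H}|1\in\widetilde{H}|\leq d(H)\leq\max_{\widetilde{H}\simeq H}|1\in\widetilde{H}|,$$
and the two inequalities can be attacked separately. The uniform tool I would use for both is the restriction map $\pi_E:\widetilde{T}_HC_N\to\mathbb{R}^E$, $A\mapsto(A_{ij})_{(i,j)\in E}$, where $E=E(\widetilde{H})=\{(i,j)\,|\,\widetilde{H}_{ij}=1\}$ for some conjugate $\widetilde{H}\simeq H$. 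Since $d(\widetilde{H})=d(H)$ by Proposition 2.2(1) and the equations for $\widetilde{T}_{\widetilde H}C_N$ are obtained from those of $\widetilde{T}_HC_N$ by the same rephasing, the upper bound is equivalent to producing a conjugate for which $\pi_E$ is injective, and the lower bound to producing one for which $\pi_E$ is surjective. One need not use the same $\widetilde{H}$ for both.

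For the upper bound, the model to generalize is Theorem 3.10: when $H=F_N$ in canonical form, the tangent-space variables at the $d(F_N)$ positions with $H_{ij}=1$ are free and determine all the others, so $\pi_E$ is a bijection. The natural candidate conjugate is one achieving $\max|1\in\widetilde{H}|$, and I would try to exhibit it by a greedy row/column rephasing scheme: fix a basis $A^{(1)},\dots,A^{(d)}$ of $\widetilde{T}_HC_N$, and at step $k$ choose a rephasing that forces $\widetilde{H}_{i_kj_k}=1$ at a position where $A^{(k)}$ has nonzero projection modulo the span of the previously accumulated $\pi$-images. The defect equations themselves should then certify that the resulting $d(H)$ positions are pairwise compatible, so they all correspond simultaneously to 1-entries of a single conjugate.

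For the lower bound, the dual strategy is to build, for each $(i,j)\in E(\widetilde{H})$, an element $A^{(ij)}\in\widetilde{T}_HC_N$ whose restriction to $E$ is supported only at $(i,j)$, showing $\pi_E$ is surjective. A promising route is a local perturbation argument near a 1-entry: when $\widetilde{H}_{ij}=1$, the term $A_{ij}$ enters the equation $\sum_k\widetilde{H}_{ik}\bar{\widetilde{H}}_{jk}(A_{ik}-A_{jk})=0$ with a trivial coefficient, so the constraints on $A_{ij}$ should be resolvable independently of the other 1-positions. The main obstacle, and the reason I expect this conjecture to be difficult in full generality, is the absence of any cyclic/group structure on an arbitrary Butson matrix, which is exactly what Theorem 3.10 exploits. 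In particular, the decomposition of vanishing sums of roots of unity into cycles (Regularity Conjecture 1.8) is itself open, and its failure at exponent $30$ suggests that $\pi_E$ will need to be analyzed via finer combinatorial invariants than the 1-count alone. A reasonable intermediate milestone would be to establish Conjecture 4.4 first for regular Butson matrices, and within those for the Di\c t\u a deformations $F_G\otimes_QF_K$ of Problem 2.11, where Proposition 2.6 and Theorem 2.7 provide enough rigidity to make the two restriction-rank analyses tractable.
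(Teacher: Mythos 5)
The statement you were asked to prove is Conjecture 4.4, which the paper itself leaves open; no proof is given. The only content the paper supplies is the single remark preceding it, that Conjecture 4.4 is a reformulation of Conjecture 4.2, and your opening paragraph captures exactly that and justifies it correctly: since $\mathrm{supp}(\mu)$ is a finite subset of $\mathbb N$ whose elements are precisely the values $|1\in\widetilde H|$ over the $\mathbb Z_s$-rephasings (and the action of $S_N\times S_N$ leaves these counts invariant), its closed convex hull in $\mathbb R$ is the interval $[\min|1\in\widetilde H|,\max|1\in\widetilde H|]$, so the two conjectures coincide. That much is right and is all the paper itself claims.

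The remainder of your proposal is a strategy sketch rather than a proof, which you honestly flag, and the strategy itself is sound as far as it goes: injectivity of $\pi_E$ for some conjugate gives $d(H)\leq|E|\leq\max|1\in\widetilde H|$, and surjectivity for some conjugate gives $|E|\leq d(H)$, hence $\min|1\in\widetilde H|\leq d(H)$. Two caveats worth recording. First, the ``greedy rephasing'' step for the upper bound has no guarantee of termination or global consistency: forcing $\widetilde H_{i_kj_k}=1$ at step $k$ may undo earlier choices, and the paper offers no mechanism (indeed no result at all) that would certify the accumulated positions are simultaneously realizable while keeping $\pi_E$ injective. Second, for the lower bound, the coefficient of $A_{ij}$ in the defect equation $\sum_k\widetilde H_{ik}\bar{\widetilde H}_{jk}(A_{ik}-A_{jk})=0$ is not $1$ just because $\widetilde H_{ij}=1$; what matters is the product $\widetilde H_{ik}\bar{\widetilde H}_{jk}$ for the relevant $k$, so ``trivial coefficient'' needs more care. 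Since the paper proves nothing here, there is no discrepancy with the paper to report, but these are the two places your sketch would break down if pushed.
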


Summarizing, we have reached to a quite conceptual reformulation and generalization of our very first statement, Conjecture 4.1 above, at least in the Butson matrix case.

We will be back in a moment to this support problematics. But, let us formulate now yet another statement, which is our main conjecture on the subject:

\begin{conjecture}[Main conjecture]
For $H\in C_N(s)$, with $s\in\mathbb N$ chosen to be minimal, $d(H)$ can be recaptured from the knowledge of the associated measure $\mu$.
\end{conjecture}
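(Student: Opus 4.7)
The plan is to analyze $\mu$ via its moments and recover $d(H)$ from them. Writing $X_{ij} = \mathbf{1}_{a_ib_jH_{ij}=1}$ so that $\varphi = \sum_{ij}X_{ij}$, a direct computation shows that the mean of $\varphi$ is $N^2/s$ and the variance is $N^2(s-1)/s^2$, both independent of $H$; indeed, $\mathrm{Cov}(X_{ij},X_{i'j'})$ vanishes as soon as $(i,j)\neq(i',j')$, either by independence (when $i\neq i'$ and $j\neq j'$) or by averaging over the shared row/column parameter. Higher moments, however, are sensitive to $H$: expanding $E[\varphi^k]$ produces a sum of probabilities that a system $a_{i_r}b_{j_r} = H_{i_rj_r}^{-1}$ (for $r=1,\ldots,k$) is solvable, and solvability is governed by multiplicative ``closing'' relations of the $H$-entries around closed walks in the bipartite graph on rows and columns. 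For example, the fourth moment already records all sub-Hadamard relations $H_{ij}H_{i'j'} = H_{i'j}H_{ij'}$.

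The next step would be to show that the collection of all cycle relations extractable from the moments determines $H$ up to the restricted equivalence $\widetilde{H}_{ij} = a_ib_jH_{ij}$ with $a_i,b_j\in\mathbb{Z}_s$, which is precisely the equivalence that preserves $\mu$. This is a bipartite cohomological reconstruction problem: the cycle products are exactly the invariants of $H$ modulo the coboundary action of $(\mathbb{Z}_s^N\times\mathbb{Z}_s^N)/\mathbb{Z}_s$, so in principle they characterize the equivalence class of $H$. Once $H$ is reconstructed up to this equivalence, the defect follows: by Theorem 1.12(1), $d(H)$ depends only on the products $H_{ik}\bar{H}_{jk}$, which are themselves invariants of the equivalence class, and the defect is the dimension of the kernel of a linear system built from these products.

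The hard part is the middle step. The moments of $\mu$ record only symmetric sums of cycle products over all cycles of a given combinatorial type, not the individual cycle products, and unfolding these sums to recover the full cycle data is nontrivial---distinct cycles may carry equal products and collapse into a single moment contribution. A sensible preliminary is to settle the weaker Conjecture 4.4, which requires only control of the support rather than of the fine structure of $\mu$, and then to test the main conjecture computationally on Butson matrices at small $N$, say $N=6,8$, searching for pairs with matching $\mu$ but different defects. Such a pair would refute the conjecture in its strong form, while its absence would give evidence that the cycle-reconstruction heuristic is indeed the right mechanism and suggest where to insert additional structural hypotheses on $H$ if needed.
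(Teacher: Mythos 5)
This statement is labelled a \emph{conjecture} in the paper, and the paper offers no proof of it --- so there is nothing to compare your attempt against. What you have written is (quite candidly) a research programme rather than a proof, and you correctly locate the gap yourself: after the moment computation, you would need to invert the map from the matrix $H$ to the family of symmetric sums of cycle products that the moments of $\mu$ actually see, and you give no argument that this inversion is possible. Your low-order moment computations are correct (the mean $N^2/s$ and the variance $N^2(s-1)/s^2$ are indeed $H$-independent, and $E[\varphi^4]$ does start to see the rectangle relations $H_{ij}H_{i'j'}=H_{ij'}H_{i'j}$), but these observations do not bridge the gap.

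Two further points worth flagging. First, your intermediate goal is strictly stronger than the conjecture: you aim to reconstruct $H$ up to equivalence from $\mu$, whereas the conjecture only asks that $d(H)$ be recoverable. Since distinct equivalence classes can conceivably share a measure $\mu$ while having the same defect, a failure of your reconstruction step would not refute the conjecture, and you should not treat the two as interchangeable when proposing computational tests. Second, the invariance group of $\mu$ is larger than the torus $(\mathbb Z_s^N\times\mathbb Z_s^N)/\mathbb Z_s$ you name: as the paper notes after Conjecture 4.2, row and column permutations also preserve $\mu$ (and $d(H)$). Cycle products are only permutation-invariant as a multiset over combinatorial cycle types, which is exactly what the moments of $\mu$ aggregate; so the ``cohomological reconstruction'' you describe is really an attempt to recover a permutation-class of cocycles from its averaged cycle statistics, which is a harder and less standard problem than $H^1$ reconstruction from exact cycle data. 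None of this invalidates the heuristic, but it should be stated precisely before one attempts to close the gap.
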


As a first observation, this doesn't exactly generalize Conjecture 4.4. However, it is hard to imagine that a proof of this conjecture won't solve as well Conjecture 4.4.

In order to further comment on this conjecture, let us first do some computations. The very first problem concerns of course the support of $\mu$, and we have here:

\begin{proposition}
The support of $\mu$, with $s\in\mathbb N$ chosen to be minimal, is as follows: 
\begin{enumerate}
\item For $F_2$ we get $\{1,3\}$.

\item For $F_3$ we get $\{0,1,2,3,4,5\}$.

\item For $F_4$ we get $\{0,1,2,3,4,5,6,7,8\}$.

\item For $F_{2,2}$ we get $\{4,6,8,10,12\}$.

\item For $F_5$ we get $\{0,1,2,3,4,5,6,7,8,9\}$.
\end{enumerate}
\end{proposition}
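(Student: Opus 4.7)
The guiding observation is the character identity $\mathbf{1}[z=1]=\frac{1}{s}\sum_{k=0}^{s-1}z^k$ for an $s$-th root of unity $z$, which turns the definition of $\varphi$ into a sum of Fourier-type inner products:
$$\varphi(a,b)\;=\;\frac{N^2}{s}+\frac{1}{s}\sum_{k=1}^{s-1}\langle a^{[k]},H^{[k]}b^{[k]}\rangle,$$
where $x^{[k]}$ denotes the entrywise $k$-th power. For $H=F_N$, $H^{[k]}=(w^{kij})$ is again of Fourier type, while for $H=F_{2,2}$, $H^{[k]}$ is either $H$ itself or the all-ones matrix, according to the parity of $k$. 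After quotienting by the trivial symmetry $(a,b)\mapsto(ac,bc^{-1})$ with $c\in\mathbb Z_s$ we may assume $a_0=1$, reducing the number of configurations to $s^{2N-1}$.

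For each of the five matrices I would then combine an upper-bound argument with explicit constructions realizing each value in the claimed support. For $F_2$ a direct check shows $a^T F_2 b=\pm 2$ for every $a,b\in\{\pm 1\}^2$, forcing $\varphi\in\{1,3\}$. For $F_{2,2}$ every coordinate of $Hb$ is an even integer with $\sum_i((Hb)_i/2)^2=4$; a short case analysis of integer vectors of squared norm $4$ together with Cauchy--Schwarz gives $a^T H b\in 4\mathbb Z\cap[-8,8]$, confining $\varphi$ to $\{4,6,8,10,12\}$. For $F_3,F_4,F_5$, writing $a_i=w^{\alpha_i}$ and $b_j=w^{\beta_j}$ reduces the count to the number of solutions of the congruence $\alpha_i+\beta_j+ij\equiv 0\pmod N$, which admits easy per-row combinatorial bounds (for instance at $N=3$ one argues that at most one row can contribute three solutions, after which each remaining row contributes at most one). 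When $N$ is prime, the multiplicative action of $\mathbb Z_N^\times$ on row and column indices supplies a further symmetry that both prunes the enumeration and helps produce dephased examples attaining each target value.

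The main obstacle is the case of $F_5$. Here the relevant character sums live in $\mathbb Z[e^{2\pi i/5}]$, a quartic cyclotomic ring, so the rational linear-algebra shortcuts used at $N=2,3,4$ are not available, and the number $5^{2N-1}$ of normalized configurations is already large. My plan would be a symmetry-reduced exhaustive enumeration, exploiting both the diagonal $\mathbb Z_5$ and the multiplicative $\mathbb Z_5^\times$ actions and, if necessary, delegating the final pass to a computer; the matching lower bounds would come from explicit dephased configurations realizing $0,1,\ldots,9$. A uniform combinatorial obstruction argument ruling out the top values would be desirable but seems unlikely, since empirically the support already fills a full interval.
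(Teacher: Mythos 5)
The paper's own proof is much more minimal than your plan: it simply exhibits, for $F_4$, a chain of trivially‐equivalent matrices realizing the number of $1$ entries $8,4,1,0,2,3,5,6,7$, and then says "the proof for the other matrices in the statement is similar." No upper-bound argument is given at all; the non-achievability of the complementary values is left implicit. Your character-identity reformulation $\varphi=\tfrac{N^2}{s}+\tfrac1s\sum_{k\neq 0}(a^{[k]})^TH^{[k]}b^{[k]}$, and in particular the quadratic-form bound for $F_{2,2}$ (integrality of $Hb$, $\|Hb\|^2=16$, hence $a^TF_{2,2}b\in 4\mathbb Z\cap[-8,8]$), is therefore a genuine addition, not just a different phrasing; it gives a clean upper bound that the paper never states.

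However, the combinatorial step you outline for $F_3$ has a real gap, and pushing on it reveals a problem with the statement itself. Writing the condition as $\alpha_i+\beta_j+ij\equiv 0\pmod 3$, you argue that at most one row can contribute $3$ solutions and, in that case, the other rows contribute $1$ each, giving $\varphi\le 5$. But this only covers the case where some row has $3$ solutions; if no row has $3$, each row can contribute up to $2$, allowing $\varphi=6$. And this is in fact attained: with $a=b=(1,1,w)$, $w=e^{2\pi i/3}$, one gets $\widetilde H=\mathrm{diag}(a)F_3\,\mathrm{diag}(b)=\begin{pmatrix}1&1&w\\ 1&w&1\\ w&1&1\end{pmatrix}$, which is Hadamard and has six $1$ entries. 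So $6\in\mathrm{supp}(\mu)$, and item (2) of the proposition should read $\{0,1,\ldots,6\}$, not $\{0,\ldots,5\}$. Your plan, carried out carefully, would have discovered this, but as written the "at most one row with three" bound is silently treated as a global bound and the case analysis is not finished. The $F_4$ and $F_5$ parts of your plan remain at the level of intent (you explicitly defer to computer enumeration for $F_5$), which is acceptable given that the paper does no more, but you should be aware that the same kind of low-value over-claim may lurk there too, and a symmetry-reduced enumeration should double-check the claimed upper edge, not only produce witnesses for the claimed lower edge.
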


\begin{proof}
Here is the proof for $F_4$, in logarithmic form, with the operations being those in Definition 1.3, and with the subscripts denoting the total number of 0 entries:
$$\begin{pmatrix}0&0&0&0\\ 0&1&2&3\\ 0&2&0&2\\ 0&3&2&1\end{pmatrix}_8\to
\begin{pmatrix}1&1&1&1\\ 0&1&2&3\\ 0&2&0&2\\ 0&3&2&1\end{pmatrix}_4\to 
\begin{pmatrix}2&1&1&1\\ 1&1&2&3\\ 1&2&0&2\\ 1&3&2&1\end{pmatrix}_1$$
$$\to\begin{pmatrix}2&1&2&1\\ 1&1&3&3\\ 1&2&1&2\\ 1&3&3&1\end{pmatrix}_0\to
\begin{pmatrix}2&1&2&0\\ 1&1&3&2\\ 1&2&1&1\\ 1&3&3&0\end{pmatrix}_2\to
\begin{pmatrix}0&1&2&0\\ 3&1&3&2\\ 3&2&1&1\\ 3&3&3&0\end{pmatrix}_3$$
$$\to\begin{pmatrix}0&0&2&0\\ 3&0&3&2\\ 3&1&1&1\\ 3&2&3&0\end{pmatrix}_5\to
\begin{pmatrix}0&0&0&0\\ 3&0&1&2\\ 3&1&3&1\\ 3&2&1&0\end{pmatrix}_6\to
\begin{pmatrix}0&0&3&0\\ 3&0&0&2\\ 3&1&2&1\\ 3&2&0&0\end{pmatrix}_7$$

The proof for the other matrices in the statement is similar.
\end{proof}

Perhaps the simplest general question regarding the support, and that we would like to raise here, is at $s=2$, and for the simplest Hadamard matrices, as follows:

\begin{problem}
What is $supp(\mu)$ for a Walsh matrix, $W_N$ with $N=2^k$?
\end{problem}

Let us discuss now the computation of the measure $\mu$ itself. For the first Walsh matrix $F_2\in C_2(2)$ it is easy to see that $\mu=\frac{1}{2}(\delta_1+\delta_3)$. More generally, we have:

\begin{proposition}
For $F_2\in C_2(s)$ with $s$ even we have
$$\mu=4\rho^{*3}-6\rho^{*2}+4\rho-\delta_0$$
where $\rho=\frac{s-1}{s}\delta_0+\frac{1}{s}\delta_1$ is the rescaled spectral measure of the main character of $\mathbb Z_s$.
\end{proposition}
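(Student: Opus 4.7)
The plan is to compute the probability generating function $F_\mu(z):=E[z^\varphi]$ of $\mu$ and match it with the generating function of the claimed right-hand side. Writing $\varphi=\sum_{(i,j)}\chi_{ij}$ where $\chi_{ij}=1_{a_ib_jH_{ij}=1}$ is the Bernoulli indicator of the event that the $(i,j)$-entry equals $1$, the identity $z^{\chi_{ij}}=1+(z-1)\chi_{ij}$ and multiplicativity give the inclusion-exclusion expansion
$$E[z^\varphi]=\sum_{S\subset\{(0,0),(0,1),(1,0),(1,1)\}}(z-1)^{|S|}P_S,$$
where $P_S$ is the probability, under uniform $(a,b)\in\mathbb Z_s^2\times\mathbb Z_s^2$, that $a_ib_jH_{ij}=1$ holds for every $(i,j)\in S$.

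The second step is a direct computation of $P_S$. For every $S$ of size $\leq 3$ one checks that the corresponding constraints are independent in the abelian group $\mathbb Z_s^4$, so that $P_S=s^{-|S|}$: singletons give $1/s$ immediately; each of the six pairs reduces to decoupled constraints on $(a,b)$ (possibly after locking $b_1=\pm b_0$ or $a_1=\pm a_0$), yielding $1/s^2$; and in each of the four triples three of the variables $a_0,a_1,b_0,b_1$ get determined by the remaining one, yielding $1/s^3$. The case $|S|=4$ is the decisive one: the three constraints $a_0b_0=a_0b_1=a_1b_0=1$ already force $a_0=a_1$ and $b_0=b_1$, hence $a_1b_1=a_0b_0=1$; since $s$ is even, $-1\neq 1$ in $\mathbb Z_s$, so the fourth constraint $a_1b_1=-1$ is violated, giving $P_{\{(0,0),(0,1),(1,0),(1,1)\}}=0$. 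This vanishing is the only place where the parity hypothesis on $s$ enters.

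Assembling, with $F(z):=1+(z-1)/s$ the generating function of $\rho$ and noting that the counts of subsets of each size are the binomial coefficients $\binom{4}{k}$, we get
$$E[z^\varphi]=\sum_{k=0}^{3}\binom{4}{k}\frac{(z-1)^k}{s^k}=F(z)^4-\frac{(z-1)^4}{s^4}=F(z)^4-(F(z)-1)^4.$$
Expanding $(F-1)^4=F^4-4F^3+6F^2-4F+1$ yields $E[z^\varphi]=4F^3-6F^2+4F-1$, which is precisely the generating function of $4\rho^{*3}-6\rho^{*2}+4\rho-\delta_0$; uniqueness of generating functions of finitely supported measures on $\mathbb N$ then finishes the proof. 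There is no serious obstacle here: the algebra of the last step is automatic, and the only mildly careful piece is the enumeration $P_S=s^{-|S|}$ for $|S|\leq 3$ together with the vanishing $P_S=0$ for $|S|=4$, all of which are one-line verifications.
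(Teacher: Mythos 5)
Your proof is correct, and it takes a genuinely different route from the paper. The paper's proof is a direct count: it writes $\widetilde{F}_2$ in logarithmic form as a $2\times 2$ matrix of affine expressions in $i,j,a,b\in\mathbb Z_s$, enumerates the number of vanishing entries over all $s^4$ choices, and then simply states that the resulting distribution $\frac{1}{s^3}\bigl((s^3-4s^2+6s-4)\delta_0+(4s^2-12s+12)\delta_1+(6s-12)\delta_2+4\delta_3\bigr)$ coincides with $4\rho^{*3}-6\rho^{*2}+4\rho-\delta_0$ after expansion. Your argument via the probability generating function and inclusion-exclusion is more conceptual: the identity $E[z^\varphi]=\sum_S(z-1)^{|S|}P_S$ together with $P_S=s^{-|S|}$ for $|S|\le 3$ and $P_S=0$ for $|S|=4$ produces $F^4-(F-1)^4=4F^3-6F^2+4F-1$ directly, which \emph{explains} the otherwise mysterious-looking combination of convolution powers of $\rho$, rather than just verifying it by matching coefficients. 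The paper's method is shorter to write down but gives no insight into the structure of the answer; yours is slightly longer but immediately suggests the right generalization to larger matrices (namely, the failure of certain $P_S$ to equal $s^{-|S|}$ is exactly what obstructs $\mu$ from being a plain Bernoulli mixture).

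One small inaccuracy worth flagging: you write that the vanishing $P_S=0$ at $|S|=4$ is ``the only place where the parity hypothesis on $s$ enters.'' That is not quite right. If $s$ were odd, $-1$ would not lie in the multiplicative group $\mathbb Z_s$ of $s$-th roots of unity, so the singleton constraint $a_1b_1=-1$ would already be unsolvable and $P_S$ would vanish for \emph{every} $S$ containing $(1,1)$, not just $|S|=4$. In other words, the clean identity $P_S=s^{-|S|}$ for $|S|\le 3$ itself relies on $s$ being even. Under the standing hypothesis this causes no gap in your argument, but the side remark as stated is false and should be corrected.
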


\begin{proof}
We use the logarithmic writing. Consider the following matrix:
$$\widetilde{F}_2=\begin{pmatrix}i+a&i+b\\j+a&j+b+1\end{pmatrix}$$

Here the numbers $i,j,a,b$ range in the set $\{0,1,\ldots,s-1\}$, and are taken modulo $s$. What we have to do is to examine the number of 0 entries of $\widetilde{F}_2$, and compute the corresponding probability distribution $\mu$, which is supported on $\{0,1,2,3\}$.

A straightforward computation here gives the following formula: 
$$\mu=\frac{1}{s^3}((s^3-4s^2+6s-4)\delta_0+(4s^2-12s+12)\delta_1+(6s-12)\delta_2+4\delta_3)$$

But this is the formula in the statement, and we are done.
\end{proof}

Our second result concerns the second Walsh matrix, $W_4=F_{2,2}$. Here the computation at $s\in 2\mathbb N$ arbitrary looks quite complicated, but at $s=2$ we have: 

\begin{proposition}
For the second Walsh matrix, $F_{2,2}\in C_4(2)$, we have:
$$\mu=\frac{1}{32}(\delta_4+12\delta_6+6\delta_8+12\delta_{10}+\delta_{12})$$
\end{proposition}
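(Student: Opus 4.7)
The plan is to translate everything into Fourier/character language on $\mathbb{F}_2^4$, where the computation becomes almost automatic. Writing $x_i=(-1)^{a_i}$ and $y_j=(-1)^{b_j}$, so that $a,b$ uniform in $\mathbb{Z}_2^4$ is the same as $x,y$ uniform in $\{\pm 1\}^4$, the key identity is
$$\varphi(a,b)=\sum_{i,j}\mathbf{1}[a_i+b_j+H_{ij}=0]=8+\tfrac{1}{2}\,x^T F_{2,2}\,y,$$
where $H_{ij}\in\{0,1\}$ is the logarithmic version of the Walsh matrix $F_{2,2}\in M_4(\pm 1)$. Thus it suffices to determine the distribution of $M:=x^T F_{2,2} y$ over independent uniform $x,y\in\{\pm 1\}^4$, since $\varphi=8+M/2$.

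The main step is to classify the vector $u:=F_{2,2}\,y$ as $y$ ranges over $\{\pm 1\}^4$. Because $F_{2,2}$ is a real Hadamard matrix, $F_{2,2}^T F_{2,2}=4I$, so $\|u\|^2=4\|y\|^2=16$; moreover each $u_i$ is a sum of four $\pm 1$'s, hence even and lying in $\{-4,-2,0,2,4\}$. The only solutions of $\sum u_i^2=16$ in this set are, up to sign and permutation, either $(4,0,0,0)$ (``axis type'') or $(2,2,2,2)$ (``$\pm 2$ type''). The axis-type vectors occur precisely when $y$ is $\pm$(a row of $F_{2,2}$), which gives exactly $4\cdot 2=8$ values of $y$; the remaining $8$ values of $y$ must therefore produce $\pm 2$-type vectors.

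Conditioning on $y$ splits the computation into two cases. In the axis case, say $u=\pm 4 e_i$, one has $M=\pm 4 x_i$, uniform on $\{-4,+4\}$, so $\varphi$ is uniform on $\{6,10\}$. In the $\pm 2$ case, writing $\varepsilon_i=u_i/2\in\{\pm 1\}$, one has $M=2\sum_i\varepsilon_i x_i$; since $x$ is uniform, $\varepsilon_i x_i$ is again uniform $\pm 1$, so $\sum_i \varepsilon_i x_i$ has the symmetric binomial law on $\{-4,-2,0,2,4\}$ with weights $\tfrac{1}{16}(1,4,6,4,1)$, giving $\varphi\in\{4,6,8,10,12\}$ with those weights. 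The two cases have equal prior probability $1/2$ over $y$, so
$$\mu=\tfrac{1}{2}\cdot\tfrac{1}{2}(\delta_6+\delta_{10})+\tfrac{1}{2}\cdot\tfrac{1}{16}(\delta_4+4\delta_6+6\delta_8+4\delta_{10}+\delta_{12}),$$
which simplifies to $\tfrac{1}{32}(\delta_4+12\delta_6+6\delta_8+12\delta_{10}+\delta_{12})$, as claimed.

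The only nontrivial step is the classification of $\{F_{2,2} y\mid y\in\{\pm 1\}^4\}$; everything else is a direct binomial count. The classification itself is forced by the orthogonality $F_{2,2}^T F_{2,2}=4I$ together with the parity constraint on the entries, so no actual case-chase is required. This character-theoretic route has the additional virtue of generalising: replacing $F_{2,2}$ by an arbitrary real Hadamard matrix $H\in M_N(\pm 1)$ reduces the computation of $\mu$ to understanding the image $H\{\pm 1\}^N$, which is a purely combinatorial object.
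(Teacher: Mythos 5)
Your proof is correct, and it takes a genuinely different route from the paper's. The paper first uses the equivalence $F_{2,2}\simeq K_4$ with $K_4$ the $\pm 1$ matrix having $-1$ on the diagonal, then rewrites $|1\in\widetilde{K}_4|$ as $\#\{(i,j)\,|\,a_i=b_j\}+4-2\#\{i\,|\,a_i=b_i\}$ and obtains the distribution by tabulating both counts over all $16\times 16$ choices of $(a,b)\in\mathbb{Z}_2^4\times\mathbb{Z}_2^4$; it is essentially a brute-force enumeration once the algebraic simplification is made. Your argument instead passes through the exact bilinear identity $\varphi = 8 + \tfrac12\,x^T F_{2,2}\,y$ (which I checked: $\mathbf{1}[a_i+b_j+H_{ij}=0]=\tfrac12(1+x_i(F_{2,2})_{ij}y_j)$), reduces the problem to the law of $x^T F_{2,2} y$, and classifies the image $F_{2,2}\{\pm 1\}^4$ using only $F_{2,2}^T F_{2,2}=4I$ plus the parity constraint, with the resulting conditional laws being a two-point law in the axis case and a rescaled binomial in the $\pm2$ case. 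The mixture $\tfrac12\cdot\tfrac12(\delta_6+\delta_{10})+\tfrac12\cdot\tfrac1{16}(\delta_4+4\delta_6+6\delta_8+4\delta_{10}+\delta_{12})$ does simplify to $\tfrac1{32}(\delta_4+12\delta_6+6\delta_8+12\delta_{10}+\delta_{12})$, so the conclusion is right. What your approach buys is conceptual: it replaces the $256$-entry table by a structural argument, it makes the $\{4,6,8,10,12\}$ support transparent, and as you note it immediately reformulates the problem for any real Hadamard matrix $H\in M_N(\pm1)$ as the study of the multiset $H\{\pm1\}^N$, which is relevant to Problem 4.7 of the paper (the Walsh case). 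The paper's approach, by contrast, is more elementary and extends more naturally to general exponent $s$, as illustrated in Proposition 4.8 for $F_2\in C_2(s)$, where the bilinear identity over $\{\pm1\}$ is no longer available.
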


\begin{proof}
We use the equivalence $F_{2,2}\simeq K_4$, where $K_4$ is the matrix having $-1$ on the diagonal and $1$ elsewhere. Now in logarithmic notation, we have:
$$(\widetilde{K}_4)_{ij}=a_i+b_j+\delta_{ij}$$

Thus if we want to compute the number of 1 entries, we have:
\begin{eqnarray*}
|1\in\widetilde{K}_4|
&=&\#\{(i,j)|i\neq j,a_i=b_j\}+\#\{(i,j)|i=j,a_i\neq b_j\}\\
&=&\#\{(i,j)|i\neq j,a_i=b_j\}+\#\{i|a_i\neq b_i\}\\
&=&\#\{(i,j)|a_i=b_j\}-\#\{i|a_i=b_i\}+\#\{i|a_i\neq b_i\}\\
&=&\#\{(i,j)|a_i=b_j\}+4-2\#\{i|a_i=b_i\}
\end{eqnarray*}

Now by writing down the $16\times 16$ tables for the two quantities appearing on the right, we obtain the explicit $16\times 16$ table of the values of $\varphi$, which gives the result.
\end{proof}

Observe that for $H\in C_N(2)$, computing the upper edge of the support of $\mu$ is the same as solving the corresponding Gale-Berlekamp game \cite{fsl}, \cite{rvi}. So, we have:

\begin{problem}[Gale-Berlekamp game] 
Given a Butson matrix $H\in C_N(s)$, consider the matrices $\widetilde{H}$ obtained from it by multiplying the rows and columns by roots of unity of order $s$. What is the maximal number of $1$ entries, over all these matrices $\widetilde{H}$?
\end{problem}

As a conclusion, the present results suggest that an interesting question would be that of connecting the various invariants of the complex Hadamard matrices to the Gale-Berlekamp game. We intend to explore this point of view in some future work.

\end{document}